\documentclass[12pt]{amsart}
\usepackage{amsmath,amscd,amsthm,amsfonts, amssymb,amsxtra}
\usepackage[all]{xy}
\usepackage{graphics}
\usepackage[margin=.5in,font=small,labelfont=sc]{caption}
\SelectTips{cm}{}
\subjclass{Primary: 57N35; Secondary: 57P10}
\newtheorem{thm}{Theorem}[section]
\newtheorem*{un-no-thm}{Theorem}
\newtheorem{cor}[thm]{Corollary}     
\newtheorem{lem}[thm]{Lemma}         
\newtheorem{prop}[thm]{Proposition}

\newtheorem{bigthm}{Theorem}

\newtheorem{bigconj}[bigthm]{Conjecture}

\theoremstyle{definition}
\newtheorem{defn}[thm]{Definition}   

\theoremstyle{definition}

\theoremstyle{definition}

\theoremstyle{remark}
\newtheorem{rem}[thm]{Remark}

\newtheorem{ex}[thm]{Example}



\DeclareMathOperator*{\holim}{holim\, }
\DeclareMathOperator*{\hocolim}{hocolim\, }
\DeclareMathOperator*{\colim}{colim\, }

\DeclareMathOperator{\rel}{rel\, }
\DeclareMathOperator{\Trunc}{Trunc}
\DeclareMathOperator{\hofiber}{hofiber}
\DeclareMathOperator{\Arr}{Arr}
\DeclareMathOperator{\Map}{Map}
\DeclareMathOperator{\Aut}{Aut}
\DeclareMathOperator{\ext}{Ext}

\begin{document}
\title[Multiple disjunction]
{Multiple disjunction for spaces \\ of  Poincar\'e embeddings}
\date{\today}
\author{Thomas G. Goodwillie}
\address{Brown University, Providence, RI 02912}
\email{tomg@math.brown.edu}
\author{John R. Klein}
\address{Wayne State University, Detroit, MI 48202}
\email{klein@math.wayne.edu}
\begin{abstract} We obtain multirelative connectivity
statements about spaces of Poincar\'e embeddings, as
precursors to analogous statements about spaces of smooth
embeddings. The latter are the key to convergence results
in the functor calculus approach to spaces of embeddings.
\end{abstract}
\thanks{Both authors are partially supported by the NSF}
\maketitle
\setlength{\parskip}{1pt plus 0pt minus 1pt} \def\:{\colon}
\def\Bbb{\mathbb} \def\bold{\mathbf} \def\cal{\mathcal}

\setcounter{tocdepth}{1}
\tableofcontents

\section{\label{intro} Introduction}

This paper addresses some questions about Poincar\'e complexes, mainly
in order to answer analogous questions about smooth manifolds. We
begin by informally explaining the questions.

Let $E(P,N)$ be the space of smooth embeddings of $P$ in $N$. If $Q$
is a submanifold of $N$ then by a general-position argument one sees
readily that the inclusion map
$$
E(P,N-Q)\rightarrow E(P,N)
$$
is $(n-p-q-1)$-connected, where $n$, $p$, and $q$ are the dimensions.

We are after multirelative generalizations of this relative
connectivity result. For example, if $Q_1$ and $Q_2$ are disjoint
submanifolds of $N$ then there is the triad
$$
(E(P,N);E(P,N-Q_1),E(P,N-Q_2))
$$ of spaces of embeddings, and we can ask about the vanishing of
low-dimensional triad homotopy groups. In other words, we can look at
the square diagram
$$
\SelectTips{cm}{}
\xymatrix{
E(P,N-(Q_1\cup Q_2)) \ar[r]\ar[d] &
E(P,N-Q_2) \ar[d]\\
E(P,N-Q_1) \ar[r] & E(P, N)\
}
$$ and ask if there is a range, depending only on the dimensions
$(n,p,q_i)$, in which it is a homotopy pullback square. More generally
for disjoint submanifolds $(Q_1,\dots,Q_r)$ there is the $(r+1)$-ad
$$
(E(P,N);E(P,N-Q_1),\dots,E(P,N-Q_r))
$$
with its associated $r$-dimensional cubical diagram, formed by the spaces
$$
\cap_{i\in S}E(P,N-Q_i)= E(P,N-Q_S),
$$ where $Q_S$ is the union of the $Q_i$ for $i\in S\subset \underline
r =\lbrace 1,\dots r\rbrace$. Call a cubical diagram of spaces
$k$-cartesian if the canonical map from the first space to the
homotopy limit of the rest of the diagram is $k$-connected. The
expected statement is that the cube just described is
$$(1-p+\Sigma_i(n-q_i-2))\text{-cartesian.}$$ (We prefer the language
of cubical diagrams to that of $(r+1)$-ads, except for concessions to
tradition in the case of manifold or Poincar\'e duality
$(r+1)$-ads. In the case of an $r$-cube arising from an $(r+1)$-ad,
\lq $k$-cartesian\rq\ basically means that the homotopy groups of the
$(r+1)$-ad vanish through dimension $k+r-1$, plus some $\pi_0$
information.  See Appendix B or \cite[\S1]{Goodwillie_CALC2}.)

While the case $r=1$ can be settled by simple dimension-counting, sharp results in the general case require more elaborate methods. Our strategy, which succeeds whenever the codimensions $n-p$ and $n-q_i$ are all at least three, is briefly this:

\begin{itemize}
\item {\bf Step One \rm (homotopy).} Solve an analogous problem with manifolds
replaced by Poincar\'e complexes.
\item {\bf Step Two \rm (surgery).} Pass from Poincar\'e embeddings to
block embeddings using a version of the Browder-Casson-Sullivan-Wall
theorem.
\item {\bf Step Three \rm (pseudoisotopy).} Pass from block embeddings to genuine embeddings using the multirelative generalization of Morlet's disjunction lemma in \cite{thesis}.
\end{itemize}
Step one is carried out in the present paper. Steps two and three will
appear in \cite{Good_Klein}.
\medskip

Before turning to the details, here are a few remarks.
\medskip

First, as to what this kind of multirelative connectivity statement is
good for: it is used in applications of functor calculus to embedding
problems, for example, to establish convergence of Weiss' Taylor tower
\cite{Weiss}.  See also \cite{Good_Weiss1}, \cite{Good_Weiss2},
\cite{Good_Klein_Weiss}.

Second, in many cases the number $1-p+\Sigma_i(n-q_i-2)$ is negative,
so that the statement has no content. In applications one compensates
for small codimension by choosing $r$ to be large.

Third, weaker statements can be proved with a good deal less work. For
example, the square displayed above can be shown to be
$(2n-2p-q_1-q_2-3)$-cartesian (as opposed to $2n-p-q_1-q_2-3$) by an
argument involving dimension-counting plus the Blakers-Massey triad
connectivity theorem. This is explained in detail and generalized to
any number of $Q_i$ in Appendix B, \ref{weak}.

Let us now go into a little more detail about definitions and
hypotheses. We choose to work with embeddings fixed along the boundary
of the ambient manifold. Let $P$ be a manifold triad. Thus it has a
boundary which is the union of two parts, $\partial_0 P$ and
$\partial_1 P$, meeting at a corner set
$\partial_{01}P=\partial\partial_0 P=\partial\partial_1 P$. (Any of
these sets might be empty or disconnected.) An embedding $e_0$ of
$\partial_0 P$ in $\partial N$ is fixed in advance. A point in
$E(P,N)$ is an embedding $e$ of $P$ in $N$ that extends $e_0$ and that
is transverse to $\partial N$ at each point of
$\partial_0P=e^{-1}(\partial N)$ (cf.\ fig.\ 1 below). Note that, although $e_0$ will
rarely be mentioned explicitly, $E(P,N)$ depends on $e_0$.
\begin{figure}
\begin{minipage}{2.9in}
\includegraphics{./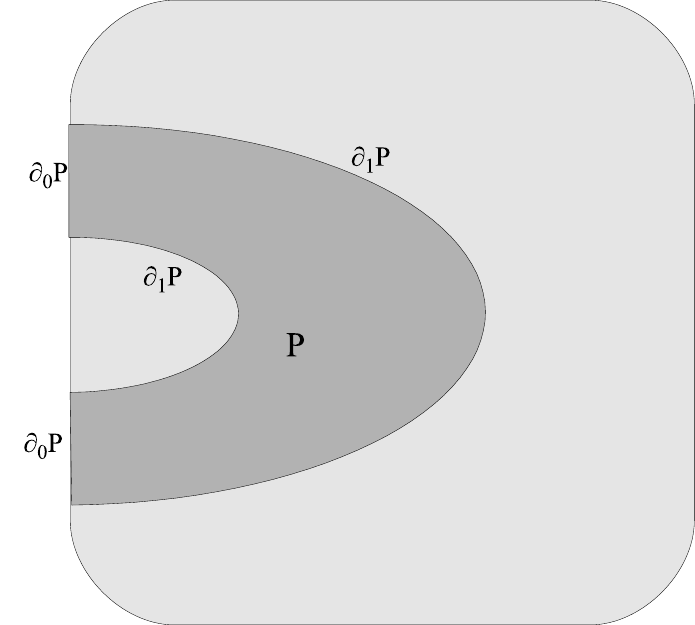}
\end{minipage}
\caption{\small An
embedding of $P$ in $N$. The lighter region
indicates the complement.}
\end{figure}

To avoid side issues related to normal bundles and their Poincar\'e
analogues, we prefer to work in codimension-zero settings as much as
possible. The numbers $p$ and $q_i$ in the statements below may be
thought of informally as the dimensions of spines of $P$ and
$Q_i$. The precise assumption (in the manifold case) is that $P$ has
{\it handle dimension} $\leq p$ relative to $\partial_0P$ in the sense
that $P$ can be built up from a collar $I\times\partial_0P$ by
attaching handles of index at most $p$. The most important case to
think of, apart from the case when $P$ is $p$-dimensional, is the case
when $P$ is an $(n-p)$-disk bundle over a $p$-dimensional manifold and
$\partial_0P$ is the restriction of the bundle to the boundary, or to
part of the boundary. The same considerations apply to $Q_i$,
$\partial_0Q_i$, and $q_i$.

The main statement that we are after is Conjecture \ref{conj1}
below. Let $N$ be a smooth compact $n$-dimensional manifold. Let
$(Q_1,\dots,Q_r)$, $r\geq 1$, be pairwise disjoint compact
submanifolds of $N$ transverse to $\partial N$ and let $q_i$ be the
handle dimension of $Q_i$ relative to $\partial_0 Q_i=Q_i\cap \partial
N$. Let $(P,\partial_0 P,\partial_1P)$ be a compact manifold triad and
let $p$ be the handle dimension of $P$ relative to $\partial_0
P$. Suppose that an embedding $e_0:\partial_0 P\rightarrow \partial N$
is given, disjoint from $\partial_0Q_i$ for all $i$. For $S\subset
\underline r = \lbrace 1,\dots r\rbrace$ let $Q_S=\cup_{i\in S}Q_i$.

\begin{bigconj}\label{conj1} In this situation the $r$-dimensional cubical diagram formed by the spaces $\lbrace E(P,N-Q_S)\rbrace$ is $(1-p+\Sigma_i(n-q_i-2))$-cartesian.
\end{bigconj}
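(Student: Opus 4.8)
\medskip

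The plan is to carry out the three-step program announced in the introduction; only Step One is the business of this paper, so a complete proof of Conjecture~\ref{conj1} is assembled from three comparisons of $r$-cubes, each of which must be cartesian in a range no worse than $1-p+\Sigma_i(n-q_i-2)$. Since an $r$-cube that receives a $c$-connected map of $r$-cubes from a $k$-cartesian $r$-cube is $\min(k,c)$-cartesian, it suffices to make each comparison sharp enough. First I would pass from the cube of smooth embedding spaces $\{E(P,N-Q_S)\}$ to the cube of \emph{block} embedding spaces $\{E^b(P,N-Q_S)\}$: by the multirelative form of Morlet's disjunction lemma in \cite{thesis} the map of cubes here is highly connected, and the standing hypothesis that the codimensions $n-p$ and $n-q_i$ are all at least three is exactly what keeps that connectivity above the target (Step Three). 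Then I would pass from block embeddings to \emph{Poincar\'e} embeddings $\{E^{PD}(P,N-Q_S)\}$ via a parametrized, multirelative version of the Browder--Casson--Sullivan--Wall theorem; again codimension $\geq 3$ is what makes the comparison highly connected (Step Two). Both comparisons are deferred to \cite{Good_Klein}.

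The heart of the matter, and the content of the present paper, is Step One: the cube $S\mapsto E^{PD}(P,N-Q_S)$ of spaces of Poincar\'e embeddings is $(1-p+\Sigma_i(n-q_i-2))$-cartesian. The approach I would take is to trade a Poincar\'e embedding for its complement. A Poincar\'e embedding of $P$ in $N$ (rel the fixed embedding $e_0$ of $\partial_0P$) is, after the standard Poincar\'e-duality bookkeeping, a way of cutting the homotopy type of $N$ into $P$, glued along a homotopy-theoretic normal sphere-object, and a complement; removing one $Q_i$ from $N$ amounts to choosing, compatibly, a Poincar\'e embedding of $Q_i$ into that complement, so that $P$ and the $Q_i$ become simultaneously and disjointly embedded. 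This separates the problem into two pieces: a cube of \emph{complement} data, built from the $N-Q_S$ together with duality information but not involving $P$, and a single application of a maps-out-of-$P$ construction (and $P$ has handle dimension $\leq p$). Keeping these separate is what produces the sharp constant.

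To execute this I would induct on $r$. The base case $r=1$ is the relative statement that $E^{PD}(P,N-Q_1)\to E^{PD}(P,N)$ is $(n-p-q_1-1)$-connected, which on the homotopy level is dimension counting: the pair $(N,N-Q_1)$ is $(n-q_1-1)$-connected, and mapping out of $P$ costs $p$. For the inductive step I would exploit the separation above: the complement cube is strongly cocartesian --- the $Q_i$ being disjoint, $S\mapsto N-Q_S$ is a strongly cocartesian cube --- so by higher Blakers--Massey it is $\bigl(1-r+\Sigma_i(n-q_i-1)\bigr)$-cartesian; applying the maps-out-of-$P$ construction then costs a single $p$ and yields $1-p+\Sigma_i(n-q_i-2)$. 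All of this must be carried out fiberwise, over $P$ and over the various complements, with Poincar\'e-duality data propagated throughout and with the ambient homotopy type changing at each stage; that bookkeeping, together with the composition and inheritance properties of cartesian cubes from \cite[\S1]{Goodwillie_CALC2}, is most of the labor.

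The hard part will be exactly this separation --- obtaining the \emph{sharp} bound rather than the weak one. If one instead mixes $P$ into every direction of the cube and applies higher Blakers--Massey directly, each of the $r$ edges is only $(n-p-q_i-1)$-connected and one arrives at $1-rp+\Sigma_i(n-q_i-2)$, paying an extra $(r-1)$ copies of $p$; this is the source of the $2n-2p-q_1-q_2-3$ versus $2n-p-q_1-q_2-3$ gap noted in the introduction and of the general weak bound in Appendix~B. Extracting the optimum requires a model for the space of Poincar\'e embeddings in which the dependence on $P$ genuinely factors out, so that the $r$ disjoining constraints decouple and only one $p$ is ever spent. A secondary obstacle is then to verify that Steps Two and Three do not erode this bound, which is why the metastable-range analysis of the Poincar\'e-to-block comparison and of the multirelative Morlet lemma is itself the technical core of \cite{Good_Klein}.
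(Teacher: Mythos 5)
Your proposal misreads the paper's architecture in two ways, and the misreadings point to a genuine gap in your argument for Step One.

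First, a structural point: this paper does not prove Conjecture~\ref{conj1}. Steps Two and Three are deferred to \cite{Good_Klein}, and the only thing proved here is the Poincar\'e analogue, Theorem~\ref{disjunction} (= Theorem~\ref{ISS}), whose conclusion is $(-p+\Sigma_i(n-q_i-2))$-cartesian --- one \emph{less} than the constant you claim to extract from Step One. The authors explicitly concede this loss (``the connectivity is lower by one \dots\ we know how to obtain the sharp result in the smooth case from this not-quite-sharp result in the Poincar\'e case''). So your closing worry --- that Steps Two and Three might ``erode'' the bound --- has the strategy exactly backwards: Step One is deliberately one short, and the surgery/pseudoisotopy steps must \emph{gain} one in codimension $\geq 3$.

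Second, and more seriously, your base case is wrong. You claim that $E^{h}(P,N-Q_1)\to E^{h}(P,N)$ being $(n-p-q_1-1)$-connected ``on the homotopy level is dimension counting.'' The paper warns in section~5 that ``whereas the corresponding problem for smooth embeddings is more or less trivial, for Poincar\'e embeddings the solution will involve all of the main ideas that are needed for the general case,'' and the result it actually obtains there is $(n-p-q-2)$-connected, not $(n-p-q-1)$. The reason dimension counting fails is that $E^{h}(P,N)$ is \emph{not} a function space: it is the homotopy fiber of a pushforward-then-pullback functor between categories of ``interiors'' $\mathcal I^h_n(-)$, and the map $E^h(P,N)\to F(P,N)$ to the function space has nontrivial homotopy fiber. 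Your induction assumes, implicitly, that the cube $S\mapsto E^h(P,N-Q_S)$ is computed by postcomposing a ``complement cube'' with a single maps-out-of-$P$ operation. The complement-and-duality constraint does not decouple that way, and extracting a sharp bound in spite of this is precisely what the paper's subdivision of the $(r+1)$-cube into $2^{r+1}$ subcubes, the ``analyticity of pushforward'' (Theorem~\ref{pushforward-analyticity}), and the homology-truncation technique are for. In the paper's proof the bottleneck is exactly the subcube of pushforward functors (upper-left square in the $r=1$ picture), which is only $(k_1+k_2-n)$-cartesian; this is where the extra $1$ is lost, and your proposal does not engage with it.

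So the proposal is not a correct proof, nor is it the paper's route. The right mental model is: the cube of ``interior'' categories $\mathcal I_S^S$ is subdivided so that pullback faces are $\infty$-cartesian, mixed pullback/pushforward faces cost exactly one $p$ and are handled by function-space arguments (Propositions~\ref{pullback-functor-diagram}, \ref{pullback-pushforward-diagram}), and the remaining all-pushforward face is handled by the truncation-based analyticity theorem --- which is where the sharpness is lost and which has no analogue in your sketch.
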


In \cite{Good_Klein} this will be proved in all cases where $n-p\geq
3$ and $n-q_i\geq 3$.

Of course, this statement covers the case when $P$ is $p$-dimensional
and $Q_i$ is $q_i$-dimensional, because handle dimension is always
less than or equal to dimension. On the other hand, a simple argument
involving the replacement of submanifolds by tubular neighborhoods
shows that in proving the conjecture it is enough to consider the
opposite extreme, when $P$ and $Q_i$ are $n$-dimensional. This is the
case we will work with.

We now explain a different form of the same conjecture. Let $N$ be a
smooth compact $n$-dimensional manifold. Assume $s\geq 2$. For $1\leq
i\leq s$ let $(Q_i,\partial_0 Q_i,\partial_1Q_i)$ be a smooth compact
manifold triad. Suppose that the manifolds $\partial_0Q_i$ are
disjointly embedded in $\partial N$. For $S\subset \underline s$ let
$Q_S$ be the disjoint union $\coprod_{i\in S}Q_i$ and let $E(Q_S,N)$
be the space of embeddings fixed on $\partial_0Q_S=\coprod_{i\in
S}\partial_0 Q_i$

\begin{bigconj} \label{conj2} In this situation
the $s$-dimensional cubical diagram formed by
the spaces $\lbrace E(Q_S,N)\rbrace$ is $(3-n+\Sigma_i(n-q_i-2))$-cartesian.
\end{bigconj}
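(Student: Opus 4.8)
The plan is to deduce Conjecture~\ref{conj2} from Conjecture~\ref{conj1} by trading one of the $s$ disjointly embedded submanifolds for the distinguished manifold $P$ of \ref{conj1}; since \ref{conj1} is to be proved in \cite{Good_Klein} whenever all codimensions are at least three, this would establish \ref{conj2} in the same range. As a preliminary, following the remark after \ref{conj1}, I would replace each $Q_i$ by a closed tubular neighborhood and assume every $Q_i$ is $n$-dimensional of handle dimension $q_i$ relative to $\partial_0 Q_i$; then for any configuration of pairwise disjoint images $Q'_i\subseteq N$ the complements $N-Q'_T$ (interiors of a compatible system of disjoint tubular neighborhoods removed) are again compact manifold triads in which the remaining $Q'_i$ sit transversally to the boundary, so the hypotheses of \ref{conj1} are literally available.

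Next, single out the index $1$, write $\underline{s}=\{1\}\sqcup\underline{s}'$ with $\underline{s}'=\{2,\dots,s\}$, and regard the $s$-cube $\mathcal{X}\colon S\mapsto E(Q_S,N)$ as a natural transformation of $(s-1)$-cubes indexed by $T\subseteq\underline{s}'$, namely the family of restriction maps $E(Q_1\sqcup Q_T,N)\to E(Q_T,N)$. By the familiar principle that an $n$-cube is $k$-cartesian exactly when the $(n-1)$-cube of homotopy fibers in a chosen direction is $k$-cartesian (\cite[\S1]{Goodwillie_CALC2}), it suffices to understand these homotopy fibers. Fix a basepoint $f\in E(Q_{\underline{s}'},N)$ and write $Q'_i=f(Q_i)$ for $i\in\underline{s}'$; by the parametrized isotopy extension theorem each restriction map above is a fibration, and its fiber over $f|_{Q_T}$ is the space of embeddings of $Q_1$ in $N$ avoiding $Q'_T=\bigcup_{i\in T}Q'_i$, which, in the tubular-neighborhood normalization above, is $E(Q_1,N-Q'_T)$. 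Hence the $(s-1)$-cube of homotopy fibers of $\mathcal{X}$ over $f$ is the cube $T\mapsto E(Q_1,N-Q'_T)$, $T\subseteq\underline{s}'$ --- and this is exactly the cube of Conjecture~\ref{conj1} for $P=Q_1$ (so $p=q_1$), ambient manifold $N$, the $r=s-1$ pairwise disjoint submanifolds $Q'_i$ ($i\in\underline{s}'$), and the given boundary embedding $e_0$ of $\partial_0 Q_1$, which is disjoint from each $\partial_0 Q'_i$ because the $\partial_0 Q_i$ are disjointly embedded in $\partial N$.

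Assuming \ref{conj1}, that cube is $\bigl(1-q_1+\sum_{i=2}^{s}(n-q_i-2)\bigr)$-cartesian, and since
\[
1-q_1+\sum_{i=2}^{s}(n-q_i-2)\;=\;3-n+\sum_{i=1}^{s}(n-q_i-2),
\]
letting $f$ range over representatives of the components of $E(Q_{\underline{s}'},N)$ --- and inheriting the $\pi_0$ part of the conclusion from the corresponding clause of \ref{conj1} --- yields that $\mathcal{X}$ is $\bigl(3-n+\sum_i(n-q_i-2)\bigr)$-cartesian, which is the assertion. The base case $s=2$ is consistent, since there $r=1$ and \ref{conj1} reduces to elementary dimension-counting. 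The genuinely hard ingredient is Conjecture~\ref{conj1}, which is outside the present paper; within this reduction the only delicate points are the tubular-neighborhood normalization and the careful setup of the isotopy extension fibration (and its behavior on $\pi_0$) so that the fiberwise identification above is valid over each component, and I expect those bookkeeping issues, rather than anything homotopy-theoretic, to be where the real work in writing it out would lie.
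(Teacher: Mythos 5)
Your argument is correct and matches the paper's own discussion: the paper, immediately after stating Conjecture~\ref{conj2}, explains that \ref{conj2} for $s=r+1$ and \ref{conj1} for $r$ are equivalent by exactly this mechanism --- view the $s$-cube as a map of $(s-1)$-cubes (the restriction maps are fibrations), fix a basepoint embedding of the $s-1$ ``extra'' pieces, and identify the resulting $(s-1)$-cube of fibers with the cube of \ref{conj1} via Prop.~\ref{fiber-cube-connectivity}, with the arithmetic $3-n+\Sigma_i(n-q_i-2)=1-p+\Sigma_i(n-q_i-2)$ after renaming one $Q_i$ as $P$. Your version singles out $Q_1$ rather than an arbitrary $Q_i$ and spells out the isotopy-extension and tubular-neighborhood normalization that the paper leaves implicit, but the reduction is the same.
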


Let us see that Conjecture \ref{conj2} for $s=r+1$ is equivalent to
Conjecture \ref{conj2} for $r$. We can assume that each $Q_i$ is
$n$-dimensional. First consider the simplest case, $r=1$ and
$s=2$. Given $N$, $Q_1$, and $Q_2$ with $\partial_0Q_1$ and
$\partial_0Q_2$ disjointly embedded in $\partial N$, consider the
square of fibrations
$$
\xymatrix{
E(Q_{\lbrace 1,2\rbrace},N)\ar[r]\ar[d] & E(Q_2,N)\ar[d]\\E(Q_1,N)\ar[r] &E(\emptyset,N)\
}
$$
For each choice $e$ of a point in $E(Q_1,N)$, the horizontal maps
give a map between the fibers of the vertical maps:
$$
E(Q_2,N-e(Q_1))\rightarrow E(Q_2,N)
$$ The square is $k$-cartesian if and only if for every choice of $e$
this map of fibers is $k$-connected. The case $r>1$ goes the same way,
because more generally (see \ref{fiber-cube-connectivity} below) a map
of $r$-cubes is a $k$-cartesian $(r+1)$-cube if and only if for every
choice of basepoint the resulting $r$-cube of homotopy fibers is
$k$-cartesian. Note that if one of the $Q_i$ is renamed $P$ then
$(3-n+\Sigma_i(n-q_i-2))$ becomes $(1-p+\Sigma_i(n-q_i-2))$.

The main result of this paper is stated below, somewhat informally, as
Theorem \ref{disjunction}. (The precise statement is Theorem
\ref{ISS}.) It is essentially a Poincar\'e analogue of Conjecture \ref{conj1}
above. It is not an exact analogue: the connectivity is lower by
one. We expect that the conclusion should hold without that slight
weakening. We have settled for the weaker result because it saves a
lot of trouble in the proof and because we know how to obtain the
sharp result in the smooth case from this not-quite-sharp result in
the Poincar\'e case. We limit ourselves to the codimension zero case
because that suffices for the purpose at hand, and we assume $n-p$ and
$n-q_i$ are at least three because the proof requires it.

In the Poincar\'e case, rather than working with handles we make the
following definition: An $n$-dimensional Poincar\'e triad
$(P;\partial_0 P,\partial_1P)$ has \it homotopy spine dimension\rm\
$\leq p$ relative to $\partial_0 P$ if
\begin{enumerate}
\item the pair $(P,\partial_0 P)$ is homotopically $p$-dimensional,
and
\item the pair $(P,\partial_1 P)$ is $(n-p-1)$-connected.
\end{enumerate}
Condition (1) means that the pair is a retract, up to weak
equivalence, of a CW pair with relative dimension $\leq p$. See
Appendix A for a fuller discussion. For a manifold triad the homotopy
spine dimension is less than or equal to the handle dimension.

A weak form of (1) would say that $(P,\partial_0 P)$ is
cohomologically $p$-dimensional (has no cohomology above dimension $p$
for any coefficient system). A weak form of (2) would say that
$(P,\partial_1 P)$ is homologically $(n-p-1)$-connected (has no
homology below dimension $n-p$ for any coefficient system). Of course,
by duality the weak (1) is equivalent to the weak (2). The weak (2)
implies (2) if the pair $(P,\partial_1 P)$ is $2$-connected, and if
$p\geq2$ the weak (1) implies (1), by \ref{coh-htpy}. Thus there is
some redundancy here.

\begin{bigthm} \label{disjunction} Let $(N,\partial N)$ be a Poincar\'e
pair of formal dimension $n$. Let $(P;\partial_0P,\partial_1P)$ and
$(Q_i;\partial_0Q_i,\partial_1Q_i)$ be Poincar\'e triads of formal dimension $n$,
where $i$ runs from $1$ to $r\geq 1$. Let $p$ and $q_i$ be the homotopy spine
dimensions of $P$ relative to $\partial_0P$ and of $Q_i$ relative
to $\partial_0Q_i$, and assume that these numbers are all less than or equal to $n-3$.
Suppose that the $Q_i$ are disjointly embedded in $N$ with
$\partial_0 Q_i=Q_i\cap \partial N$ and that $\partial_0P$ is
embedded in $\partial N$ disjointly from all the $\partial_0 Q_i$.
Then the cubical diagram
formed by the spaces of Poincar\'e embeddings
$\lbrace E^h(P,N-Q_S)\rbrace$ is $(-p+\Sigma_i(n-q_i-2))$-cartesian.
\end{bigthm}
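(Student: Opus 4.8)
The plan is to prove the theorem by a nested induction: an outer induction on $r$, and, for each fixed $r$, an inner induction on the number of handles needed to build $P$ from a collar on $\partial_0P$ (using the cell/handle model of ``homotopically $p$-dimensional'' from Appendix A). As in the text I would first reduce to the case where $P$ and all the $Q_i$ are $n$-dimensional. The engine is a pair of fibration sequences of Poincar\'e embedding spaces. The first is the \emph{restriction fibration}: if $P$ is built from a sub-Poincar\'e-triad $P_0$ by attaching one handle of index $k\le p$, then restriction of Poincar\'e embeddings is a fibration
$$E^h(P,M)\longrightarrow E^h(P_0,M)$$
whose fiber over a Poincar\'e embedding of $P_0$ with complement $C_0=M-P_0$ is the space of Poincar\'e embeddings of that handle into $C_0$ rel its attaching region; since the handle pair $(D^k\times D^{n-k},\,S^{k-1}\times D^{n-k})$ has homotopy spine dimension $\le k$ relative to its attaching region, this fiber is again a space of the kind treated by the theorem, now with ambient $C_0$ and with ``$P$'' of spine dimension $\le k\le p$. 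The second is the \emph{complement fibration} $E^h(P,M-Q)\to E^h(P\amalg Q,M)\to E^h(Q,M)$, which I would use exactly as in the discussion preceding Conjecture~\ref{conj2} to move between cubes of $E^h(P,N-Q_S)$'s and cubes in which $P$ has been absorbed into the family $\lbrace Q_i\rbrace$, and which also lets one build the $Q_i$ up by handles when that is convenient.

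Granting these, the inner inductive step goes as follows. Fix a Poincar\'e embedding $e\in E^h(P_0,N-Q_{\underline r})$ --- so $P_0$ sits disjointly from all the $Q_i$ --- and peel off the last handle of $P$; this produces a levelwise fibration sequence of $r$-cubes $\mathcal F\to\mathcal E\to\mathcal B$ with $\mathcal E=\lbrace E^h(P,N-Q_S)\rbrace$, $\mathcal B=\lbrace E^h(P_0,N-Q_S)\rbrace$, and $\mathcal F=\lbrace E^h(\text{handle},(N-P_0)-Q_S)\rbrace$. The base $\mathcal B$ is $c$-cartesian ($c=-p+\Sigma_i(n-q_i-2)$) by the inner hypothesis (one fewer handle, spine dimension still $\le p$), and $\mathcal F$ is $(-k+\Sigma_i(n-q_i-2))$-cartesian by the inner hypothesis applied in the Poincar\'e pair $N-P_0$, hence $c$-cartesian since $k\le p$. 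A levelwise fibration sequence of cubes whose base and fiber cubes are both $c$-cartesian has $c$-cartesian total cube --- by the total-homotopy-fiber characterization of cartesianness together with the long exact sequence of the induced fibration of total homotopy fibers --- so $\mathcal E$ is $c$-cartesian. The inner induction bottoms out when $P$ is a collar on $\partial_0P$, where $E^h(P,M)$ is weakly contractible and the cube is $\infty$-cartesian; the one thing this does not cover is the case in which $P$ itself is a single handle, for then the restriction fibration is vacuous.

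That single-handle case --- the honest $r$-fold disjunction for Poincar\'e embeddings of a disk --- is the heart of the argument, and I expect it to be the main obstacle. Here I would run the outer induction on $r$. The base case $r=1$, that $E^h(P,N-Q_1)\to E^h(P,N)$ is $(n-p-q_1-2)$-connected, is the Poincar\'e analogue of the general-position estimate; it is provable by the fiberwise Spanier--Whitehead duality methods of Klein, and it is precisely here that the characteristic loss of one (versus the smooth $(n-p-q_1-1)$) is incurred, duality replacing transversality. For the inductive step in $r$ I would build the $Q_i$ up by handles, split the cube in the direction of $Q_{r+1}$, and use the complement fibration together with the fiber-cube criterion~\ref{fiber-cube-connectivity} to express the multirelative statement in terms of lower-$r$ statements about Poincar\'e embeddings into complements of handles, feeding in the generalized Blakers--Massey theorem for cubical diagrams (Appendix~B) to assemble the connectivity; the additive form $-p+\Sigma_i(n-q_i-2)$ of the estimate is exactly what higher excision yields when the defects $n-q_i-2$ are summed over the $r$ directions of the cube. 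The points I expect to need the most care are: (i) checking that hypotheses (1)--(2) defining homotopy spine dimension persist for the complements $N-P_0$ and for the handle pairs appearing as fibers, which uses Poincar\'e--Lefschetz duality and the implications among the weak and strong forms of (1)--(2) noted before the statement; (ii) carrying the retract-up-to-weak-equivalence subtlety in ``homotopically $p$-dimensional'' (Appendix~A) through the handle induction; and (iii) verifying that the restriction and complement maps are genuinely fibrations and identifying their fibers in whatever model for $E^h$ is in force.
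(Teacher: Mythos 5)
Your proposal takes a genuinely different route from the paper, and there is a real gap in the step you yourself flag as the heart of the matter. The paper never does a handle (or cell) induction on $P$: it deliberately replaces handle dimension by the homotopical notion of spine dimension precisely because Poincar\'e complexes do not come with handle decompositions, and because ``peeling off a handle'' would require identifying the fiber of the restriction map with a space $E^h(\text{handle},\,C_0)$ whose ambient ``complement'' $C_0$ is not a canonical Poincar\'e pair but rather part of the chosen embedding data. Instead the paper subdivides an $(r+1)$-cube of spaces of interiors $\cal I^h_n(-)$ into $2^{r+1}$ small cubes (Sections~5 and~7), disposes of most of them with the pullback/pushforward fiber identifications of Section~2, and isolates the whole analytic content in a single cube handled by Theorem~\ref{pushforward-analyticity} (``analyticity of pushforward''), whose proof in Section~6 uses homology truncation of chain complexes over $\mathbb Z[\pi_1]$ together with Poincar\'e--Lefschetz duality --- not a Blakers--Massey argument applied to the embedding cube.

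The gap in your argument is in the claim that, for a single handle, the inductive step in $r$ can be assembled by ``feeding in the generalized Blakers--Massey theorem'' and that ``the additive form $-p+\Sigma_i(n-q_i-2)$ of the estimate is exactly what higher excision yields.'' This is exactly what higher excision does \emph{not} yield. As the paper makes explicit in the Introduction and proves in Proposition~\ref{weak}, the dimension-counting plus higher Blakers--Massey argument on the cube $\{E(P,N-Q_S)\}$ only gives $\bigl(1+\Sigma_i(n-p-q_i-2)\bigr)$-cartesian: the $-p$ is charged once per direction of the cube, so the estimate is worse than the target by $(r-1)p$. The cube of embedding spaces is a good deal more cartesian than its edge connectivities and cocartesian faces alone reveal, and extracting the extra $(r-1)p$ is precisely what the factorization-category machinery and the truncation theorem are for. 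A related smaller issue: your reduction to a single handle is formally consistent (both base and fiber cubes keep spine dimension $\le p$, so the constant does not accumulate), but it still leaves the single-handle $r$-fold disjunction entirely open, and no combination of the fiber-cube criterion (\ref{fiber-cube-connectivity}), the elementary rules (\ref{simple_rule_1}), and Blakers--Massey (\ref{Blakers-Massey-n-ad}, \ref{dual}) alone will produce the sharp constant there.
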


There is an equivalent formulation of this result, related to it as Conjecture \ref{conj2} is related to Conjecture \ref{conj1}. We will not write it down here.
It remains to explain what we mean by spaces of (codimension-zero)
Poincar\'e embeddings. The details are in section 2, but here is a
preliminary account:

If $\partial N$ is a closed Poincar\'e complex of dimension $n-1$,
consider the category $(\partial N\downarrow\cal T)$ of spaces under
$\partial N$. Let $w(\partial N\downarrow\cal T)$ be the subcategory
having all the objects but having only weak equivalences as
morphisms. Let $\cal I^h_n(\partial N)$ be the full subcategory of
$w(\partial N\downarrow\cal T)$ whose objects $\partial N\rightarrow
X$ satisfy $n$-dimensional relative Poincar\'e duality. The nerve of
this, or of an equivalent small category, can then be called a \lq
space of interiors\rq\ for $\partial N$. If $(N,\partial N)$ is a CW
Poincar\'e pair then the component of the nerve of $\cal
I^h_n(\partial N)$ determined by $N$ is a delooping of the topological
monoid $\Aut^h(N)$ of all homotopy equivalences
$N\rightarrow N$ fixing $\partial N$ pointwise.

Now suppose that $(N;\partial_0 N,\partial_2 N)$ and $(P;\partial_0
P,\partial_1 P)$ are $n$-dimensional Poincar\'e triads and that
$\partial_0 P=\partial_0 N$. Then $\partial_1 P\cup\partial_2 N$ is an
$(n-1)$-dimensional Poincar\'e space and there is a functor
$$
\cal I^h_n(\partial_1 P\cup\partial_2 N)\rightarrow \cal
I^h_n(\partial_0 P\cup\partial_2 N)=\cal I^h_n(\partial N)
$$ giving by \lq gluing to $P$ along $\partial_1P$\rq. This can be
described as a two-step process: there are functors
$$
w(\partial_1 P\cup\partial_2 N\downarrow\cal T)\rightarrow
w(P\cup\partial_2 N\downarrow\cal T)\rightarrow
w(\partial N \downarrow\cal T),
$$
resulting from the \lq inclusion\rq\ maps
$$
\partial_1 P\cup\partial_2 N\rightarrow P\cup\partial_2 N\leftarrow \partial_0 P\cup\partial_2 N=\partial N.
$$
The first step is pushforward along the first inclusion, and the
second is composition with the other inclusion. The composed functor
takes the subcategory $\cal I^h_n(\partial_1 P\cup\partial_2 N)$ into
the subcategory $\cal I^h_n(\partial N).$

The homotopy fiber, with respect to the point $N\in\cal I^h_n(\partial
N)$, of this map of spaces of interiors is what we take as the
definition of the space $E^h(P,N)$ of Poincar\'e embeddings. The
following imprecise remark is offered as an indication that this is a
good definition: $\Aut^h(N)$ may be thought of as acting on $E^h(P,N)$,
and for any given Poincar\'e embedding of $P$ in $N$ with complement
$C$ there is a fibration sequence $\Aut^h(C)\rightarrow
\Aut^h(N)\rightarrow E^h(P,N)$. We will not be using this statement,
but the reader is welcome to make it more precise and to extract a
proof of it from some of the results in section 2 below.

The best justification for the definition is that on the one hand we
can prove a theorem about it here while on the other hand we can use
that theorem in \cite{Good_Klein} to deduce a theorem about spaces of
smooth embeddings.

In order to work with these \lq spaces of interiors\rq\ and the
relevant maps between them, we need to think systematically about
categories of spaces under a fixed space.

Specifically, we need to establish something called the {\it
analyticity of pushforward,} which may be loosely stated as follows:
Fix an integer $d\geq 2$. To a space $A$ is associated, as before, the
category $w(A\downarrow\cal T)$ of spaces under $A$ and weak
equivalences. Define a full subcategory by requiring of an object $X$
that the structural map $A\rightarrow X$ must be both homotopically
$d$-dimensional and a cofibration. Consider this as a functor of $A$ by observing that a map $A\rightarrow B$ induces a suitable functor $X\mapsto
B\cup_AX$ from spaces under $A$ to spaces under $B$. Passing to realizations of nerves, we obtain a space-valued
functor of the space $A$ which preserves weak equivalences and (this
is the point) is analytic in the sense of \cite{Goodwillie_CALC2} and
\cite{Goodwillie_CALC3}.  For example, when this functor is applied to
a homotopy pushout square
$$
\xymatrix{
A\ar[r]\ar[d] & A_1\ar[d]\\
A_2 \ar[r] & A_{12}
}
$$
in which the maps $A\rightarrow A_i$ are $k_i$-connected, it yields
a $(k_1+k_2-d)$-cartesian square, provided $k_i\geq 2$. The statement
generalizes to higher-dimensional cubical diagrams, the general
formula being $2-d+\Sigma(k_i-1)$. One issue not addressed in this rough
sketch is that we appeared to need a nerve of a non-small
category. Another is that we did not precisely make a functor, since
the pushforward along a composition is merely isomorphic, not equal,
to a composition of pushforward functors.

Incidentally, in discussing such matters, we will sometimes say that a
category or a functor is $k$-connected, or that a cubical diagram of
categories is $k$-cartesian. Of course we mean that by making
realizations of nerves one gets a $k$-connected space or map of
spaces, or a $k$-cartesian cube of spaces.

In order to work with categories of spaces under a fixed space, and
especially to describe the homotopy fibers of certain maps between
(realizations of nerves of) these, we are led to consider something
more general: categories of factorizations of a fixed map of spaces.

Two kinds of comments about rigor are worth making here.

First, because of the subject matter (multirelative or cubical
connectivity), we must be particularly careful to avoid the kinds of
sloppiness that can hide behind phrases like \lq up to equivalence\rq\
or \lq up to homotopy\rq. Diagrams must commute on the nose. For
example, it is relatively useless to know that a square diagram
$$
\xymatrix{
A\ar[r]\ar[d] & B\ar[d]\\
C\ar[r] & D
}
$$
commutes up to homotopy unless one also chooses a particular
homotopy between the two maps from $A$ to $D$. Questions like \lq is
it a homotopy pullback square?\rq\ have no meaning without such a
choice. Keeping track of the necessary choices of that kind for more
complicated diagrams does not hold much appeal for us, hence the
insistence on strictly commutative diagrams.

An effect of this careful attention to strict commutativity is that
not only the proof but even the detailed statement of Theorem
\ref{ISS} is more technical than we could wish. The result about
manifolds to be derived from it in \cite{Good_Klein} will look better.

Second, the paper has been written in a free and easy way as far as
set theory is concerned. Something must be done to guard against such
monstrosities as a space that is the realization of the nerve of the
category of all spaces. After all, we are defining a space of
Poincar\'e embeddings to be the homotopy fiber of a map between two
things of just that kind. Here are some suggestions about how to work around these difficulties.

Option One: Fix a Grothendieck universe $\cal U$, and use only $\cal
U$-sets to make the spaces which are the objects of the category $\cal
T$. Various results in the paper refer to spaces that arise from
nerves of categories closely related to $\cal T$. These are not spaces
in the same sense, but our statements and proofs are legitimate when
they are understood to refer to a larger universe. It is reasonable to
ask whether the various \lq large\rq\ spaces encountered along the way
are in fact weakly homotopy equivalent to \lq small\rq\ spaces. The
answer is yes in many cases, including components of factorization
categories and those spaces of Poincar\'e embeddings that are relevant
to our argument. One disadvantage of this approach is that it assumes
the existence of $\cal U$.

Option Two: Place some more drastic restriction on the cardinalities
of the spaces in $\cal T$, but not so drastic as to exclude any of the
spaces you really care about. To do this carefully while still using
model category language would mean giving up on model category
language, or at least weakening the model category axioms about
existence of (co)limits.

Option Three (our own favorite): Learn to call a category $k$-connected without making a
space out of it: $\cal C$ is $k$-connected if for every integer $m$
from $0$ through $k+1$, for every small subcategory $\cal
C_1\subset\cal C$, for every map of $S^{m-1}$ into the realization of
the nerve of $\cal C_1$, there exists a small category $\cal C_2$ of
$\cal C$ containing $\cal C_1$ such that in the realization of its
nerve the map extends to $D^m$. One might as well be a little more general: call a simplicial class $X$ $k$-connected if for every integer $m$
from $0$ through $k+1$, for every simplicial set $X_1\subset X$, for every map of $S^{m-1}$ into the realization of $ X_1$, there exists a larger simplicial set $X_2\subset X\dots $. Similar considerations apply to maps
being $k$-connected, and to cubical diagrams being $k$-cartesian. The
reader can easily verify that the standard tools like the \lq higher
Blakers-Massey theorem\rq\ still apply.

The remainder of the paper is organized as follows:

Section 2 is about categories of factorizations and their nerves with
respect to weak equivalence. It gives a number of general facts and a
number of key examples, including those that lead to the definition of
spaces of Poincar\'e embeddings and some others that are needed in
proofs.

Section 3 presents a technique, \it homology truncation\rm, that can
be used to obtain (multi-)relative connectivity information about such
spaces of factorizations.

Section 4 introduces the principle of analyticity of pushforward and
indicates how to prove it, in the case of squares, using the
truncation method.

Section 5 gives the proof of our main result in the simplest case,
$r=1$ and $s=2$, using the analyticity for squares.

Section 6 states and proves the analyticity in rigorous fashion. The
proof is the extension to higher cubes of a method presented very
sketchily for squares in section 4.

Section 7 proves the main result. The proof is an extension to higher
cubes of the method described in section 5 for squares.

Thus in some sense sections 6 and 7 contain no new ideas, but just
technical details. The technicalities in section 6 are rather
extensive, but those in section 7 amount to little more than
introducing notation for some subdivided cubical diagrams.

The brief Appendix A gives conventions and basic facts about
$k$-connectedness of maps and homotopical $d$-dimensionality of maps.

Appendix B recalls conventions and basic results about cubical
diagrams, including the \lq higher Blakers-Massey theorems\rq.

Appendix C develops what we need from the theory of Poincar\'e duality
spaces in a self-contained, self-indulgent, and, we hope, easy-to-read
way.

\section{\label{factorizations} Spaces of Factorizations}

Let $\cal T$ be the category of topological spaces. If $j\: A\rightarrow
B$ is a morphism in $\cal T$, then $\cal T(j)$ is the category of
factorizations of $j$. An object consists of a space $X$ and maps
$i\:A\rightarrow X$ and $p\:X\rightarrow B$ such that $p\circ i=j$. A
morphism $g\:(X,i,p)\rightarrow (X',i',p')$ is a map $g\: X\rightarrow
X'$ such that $g\circ i=i'$ and $p'\circ g=p$. We sometimes write $X$
for $(X,i,p)$, and we sometimes write $\cal T(A\rightarrow B)$ when
$j$ is understood. Note that $\cal T(A\rightarrow *)$ is isomorphic to
$(A\downarrow\cal T)$, the category of spaces under $A$, while $\cal
T(\emptyset\rightarrow B)$ is isomorphic to $(\cal T\downarrow B)$,
the category of spaces over $B$.

The theory of model categories will not be used heavily in this paper,
but it will provide a useful language. Let $\mathcal T$ have the
Quillen model structure \cite{Quillen}, based on weak homotopy
equivalences and Serre fibrations. Each category $\mathcal T(j)$
inherits a model structure from $\mathcal T$ in the simplest way
imaginable: a morphism is a weak equivalence (cofibration, fibration)
in $\cal T(j)$ if it is a weak equivalence (cofibration, fibration) of
spaces. The fact that this leads to a model structure is completely
general.

Let $w\mathcal T(j)$ be the subcategory of $\mathcal T(j)$ having all
of the objects but having only weak equivalences as morphisms.

Various useful homotopy types will arise as nerves of portions of these categories $w\cal T(j)$. Whenever we use the word \lq portion\rq\, it will refer to a subcategory that is the union of some components of the larger category. (One might say that it is \lq open and closed\rq.)

\begin{ex}This is the space of all possible interiors for a given boundary. If $Y$ is a Poincar\'e space of formal dimension $n-1$, let $\cal I^h_n(Y)$ be the portion of $w\cal T(Y\rightarrow *)$ consisting of those maps $Y\rightarrow X$ that satisfy relative $n$-dimensional duality.
\end{ex}

\begin{ex} \label{left-inverses}
Given maps $A\rightarrow B\rightarrow C$, let $w\cal T(A\rightarrow B; \sim C)$ be the portion of $w\cal T(A\rightarrow B)$ consisting of those factorizations $A\rightarrow X\rightarrow B$ such that the composed map $X\rightarrow B\rightarrow C$ is a weak equivalence.  It turns out  (\ref{function-space}) that if $A\rightarrow C$ is a cofibration and $B\rightarrow C$ is a fibration then this is equivalent to the space of right inverses of $B\rightarrow C$ in the category of spaces under $A$. Likewise, let $w\cal T(B\rightarrow C; A\sim )$ be the portion of $w\cal T(B\rightarrow C)$ consisting of factorizations $B\rightarrow X\rightarrow C$ such that $A\rightarrow X$ is a weak equivalence. If $A\rightarrow C$ is a fibration and $A\rightarrow B$ is a cofibration then this is equivalent to the space of left inverses of $A\rightarrow B$ in the category of spaces over $C$.
\end{ex}

\begin{ex}Let $w\cal T(A\rightarrow B,d)$ be the portion of $w\cal T(A\rightarrow B)$ consisting of objects $X$ such that the map $A\rightarrow X$ is homotopically $d$-dimensional (a retract up to equivalence of a relative cell complex whose cells have dimension at most $d$).
\end{ex}

\begin{ex} \label{ur-completion}
This is the space of all ways of completing a given diagram
$$
\xymatrix{
A\ar[d]\\
B\ar[r] & D
}
$$
to a homotopy pushout square. That is, it is the portion of $w\cal T(A\rightarrow D)$ consisting of factorizations $A\rightarrow X\rightarrow D$ such that the (commutative) square
$$
\xymatrix{
A\ar[d]\ar[r] & X\ar[d]\\
B\ar[r] & D
}
$$
is a homotopy pushout. We will see (Corollary \ref{trunc_cor})
 that if the map $B\rightarrow D$ is homotopically $d$-dimensional with $d\geq 2$ and the map $A\rightarrow B$ is $k$-connected with $k\geq 2$ then the space in question is $(k-d)$-connected (in particular nonempty if $k\geq d-1$). The tool for proving this, \it homology truncation\rm, will be explained in the next section.
\end{ex}

\begin{ex} \label{cocart-replace} In much the same vein, given a square
$$
\xymatrix{
A\ar[d]\ar[r] & B\ar[d]\\
C\ar[r] & D
}
$$
we may consider the portion of $w\cal T(A\rightarrow B\times_D C)$ consisting of factorizations $A\rightarrow X\rightarrow B\times_D C$ such that the square
$$
\xymatrix{
X\ar[d]\ar[r] & B\ar[d]\\
C\ar[r] & D
}
$$
is a homotopy pushout. Again, a truncation argument yields good information about it, at least if the maps $B\rightarrow D\leftarrow C$ are fibrations. An extension of this argument to high-dimensional cubes is at the crux of the proof of analyticity of pushforward, which in turn is the key to our main result.
\end{ex}

Returning to spaces of factorizations in general: Let $w\cal T^c(j)\subset w\cal T(j)$ be the full subcategory of cofibrant objects, in other words those $(X,i,p)$ for which $i$ is a cofibration. Similarly let $w\cal T^f(j)$ be the category of fibrant objects and $w\cal T^{cf}(j)$ the intersection of these.

\begin{prop} \label{replacements} The inclusions
$$
\xymatrix{
w\cal T^{cf}(j)\ar[r]\ar[d] & w\cal T^f(j)\ar[d]\\
w\cal T^c(j)\ar[r] & w\cal T(j)
}
$$
are weak equivalences. That is, they induce weak equivalences of nerves.
\end{prop}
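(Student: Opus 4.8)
The plan is to show that each of the four inclusions in the square is a weak equivalence of nerves by exhibiting, in each case, an inverse-up-to-natural-transformation functor, and then invoking the standard fact that a natural transformation between functors of categories induces a homotopy between the maps of nerves. Concretely, I would first treat the cofibrant replacement. Recall that the Quillen model structure on $\cal T$ comes with a functorial factorization: any map $i\colon A\to X$ factors functorially as $A\xrightarrow{\,\tilde i\,}\tilde X\xrightarrow{\,\sim\,} X$ with $\tilde i$ a cofibration and the second map an acyclic fibration (hence a weak equivalence). Applying this to the structural map $i$ of an object $(X,i,p)$ of $w\cal T(j)$, and taking the structural map to $B$ to be the composite $\tilde X\xrightarrow{\sim}X\xrightarrow{p}B$, defines a functor $R^c\colon w\cal T(j)\to w\cal T^c(j)$. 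I would check that $R^c$ really lands in $w\cal T^c(j)$ (the new object is cofibrant) and that the weak equivalence $\tilde X\to X$ is a morphism in $w\cal T(j)$, i.e.\ commutes with both structural maps — this is immediate from the construction. Functoriality of the chosen factorization is what makes $R^c$ a functor on the nose; the acyclic fibrations $\tilde X\to X$ assemble into a natural weak equivalence $\iota\circ R^c\Rightarrow \mathrm{id}_{w\cal T(j)}$, where $\iota$ is the inclusion, and the restriction of this natural transformation to $w\cal T^c(j)$ gives $R^c\circ\iota\Rightarrow\mathrm{id}$ after observing that on a cofibrant object the replacement map is still a weak equivalence in $w\cal T^c(j)$ (every morphism in $w\cal T^c(j)$ is a weak equivalence, so this is automatic). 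Hence $\iota$ is a weak equivalence of nerves.

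Dually, I would use the functorial factorization of $p\colon X\to B$ as $X\xrightarrow{\,\sim\,}\hat X\xrightarrow{\,\hat p\,}B$ with the first map an acyclic cofibration and $\hat p$ a fibration, keeping the composite $A\to X\to\hat X$ as the new structural map under $A$. This defines $R^f\colon w\cal T(j)\to w\cal T^f(j)$ together with a natural weak equivalence $\mathrm{id}\Rightarrow\iota\circ R^f$, so the right-hand vertical inclusion (and the bottom horizontal, by the same argument applied inside $w\cal T^c$) is a weak equivalence. Note the two functorial factorizations are independent, so $R^c$ and $R^f$ commute up to natural isomorphism and can be composed to give $R^{cf}=R^f\circ R^c\colon w\cal T(j)\to w\cal T^{cf}(j)$; applying $R^f$ to a cofibrant object preserves cofibrancy because acyclic cofibrations are closed under composition with the structural map, so $R^{cf}$ does land in $w\cal T^{cf}(j)$. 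The two natural weak equivalences splice to give $R^{cf}$ a two-sided homotopy inverse to the top-left inclusion into $w\cal T(j)$, and the same bookkeeping shows the two remaining edges of the square are weak equivalences. Since three of the four edges being weak equivalences already forces the fourth, the square of nerves is in particular a commutative square of weak equivalences.

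The only genuine subtlety — and the step I would be most careful about — is the set-theoretic one flagged in the introduction: $w\cal T(j)$ is a large category, so "weak equivalence of nerves" must be read in the sense of Option Three, i.e.\ via small subcategories. The functors $R^c$, $R^f$, $R^{cf}$ are defined on all of $w\cal T(j)$ and carry a small subcategory $\cal C_1$ into a small subcategory (functorial factorization applied to the finitely many objects and morphisms of $\cal C_1$ produces only set-many new spaces), and the natural transformations restrict to $\cal C_1$; so the lifting criterion for a map $S^{m-1}\to |N\cal C_1|$ extending over $D^m$ in some larger small subcategory goes through verbatim. Everything else is formal: once the functors and natural transformations are in hand, the standard lemma that naturally homotopic functors induce homotopic maps of nerves does all the work, and no model-category input beyond the existence of functorial factorizations is needed.
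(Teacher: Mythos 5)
Your proposal is correct and matches the paper's proof, which likewise defines a cofibrant (resp.\ fibrant) replacement functor via functorial factorization and observes that the acyclic fibration (resp.\ acyclic cofibration) component of the factorization assembles into a natural transformation making the replacement a two-sided inverse up to homotopy of nerves. You have simply spelled out the details — including the preservation of cofibrancy by fibrant replacement, the composition giving $R^{cf}$, and the Option-Three set-theoretic caveat — that the paper leaves implicit in its one-line proof.
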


\proof
Functorial factorization of a morphism as a trivial fibration composed with a cofibration yields a functor $w\cal T(j)\rightarrow w\cal T^c(j)$ which is, up to natural transformation, a two-sided inverse to the inclusion functor. The same method covers the other inclusions.
\endproof

The next result describes the homotopy type of any component of the nerve of $w\cal T(j)$ as a delooping of the simplicial monoid of homotopy automorphisms of a suitable object in that component. To state and prove it, we introduce a simplicial category $w_{\bullet}\cal T(j)$ with $w_0\cal T(j)=w\cal T(j)$. For each $m\geq 0$, the objects of $w_m\cal T(j)$ are the same as those of $\cal T(j)$. A morphism from $X$ to $Y$ in $w_m\cal T(j)$ is a family of $w\cal T(j)$-morphisms parametrized by the
simplex $\Delta^m$, continuously in the sense that the corresponding map $\Delta^m\times X\rightarrow Y$ of spaces is continuous. For an object $X$ of $\cal T(j)$, let $C_Xw_{\bullet}\cal T(j)$ be the component of $w_{\bullet}\cal T(j)$ containing the object $X$, in other words the full simplicial subcategory of objects weakly equivalent to $X$. Let $\Aut_{\bullet}^h(X)$ be the simplicial monoid of endomorphisms of $X$, in other words the full simplicial subcategory of $w_{\bullet}\cal T(j)$ having sole object $X$.

\begin{lem} \label{Waldhausen_adaptation}
The inclusion $w\cal T(j)=w_0\cal T(j)\rightarrow w_{\bullet}\cal T(j)$ is a weak equivalence (i.e., it induces a weak equivalence of nerves). If $X$ is fibrant and cofibrant, then the inclusion $\Aut_{\bullet}^h(X)\rightarrow C_Xw_{\bullet}\cal T(j)$ is also a weak equivalence.
\end{lem}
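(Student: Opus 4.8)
The plan is to prove the two assertions by the same mechanism, namely a ``Waldhausen-style'' comparison between a $1$-categorical nerve and a simplicially enriched nerve. For the first assertion, the point is that $w_{\bullet}\cal T(j)$ is a simplicial category whose simplicial set of morphisms $\operatorname{Hom}(X,Y)$ has as its set of $m$-simplices the continuous families parametrized by $\Delta^m$; I would first check that this simplicial mapping space is the singular complex (or at least has the weak homotopy type) of the topological space of $w\cal T(j)$-morphisms $X\rightarrow Y$ with the compact-open topology, so that the discrete category $w_0\cal T(j)=w\cal T(j)$ includes into $w_{\bullet}\cal T(j)$ as the $0$-simplices. The nerve of the simplicial category is a bisimplicial set; realizing in the simplicial direction first gives, level-by-level in the nerve direction, a product of mapping spaces, and the claim is that the inclusion of the discrete nerve is a levelwise weak equivalence of the two bisimplicial sets in the ``wrong'' direction. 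This is not literally true levelwise --- $\operatorname{Hom}(X,Y)$ is not discrete --- so the honest argument is that the map of bisimplicial sets $N_{\bullet}w\cal T(j)\rightarrow N_{\bullet}w_{\bullet}\cal T(j)$ induces a homotopy equivalence on realizations because each mapping space $\operatorname{Hom}_{w_{\bullet}\cal T(j)}(X,Y)$ deformation-retracts, functorially, onto a point whenever it is nonempty, i.e.\ onto the component of a chosen morphism; I would make this precise using that $\operatorname{Hom}_{w_{\bullet}}(X,Y)$, being a space of weak equivalences between fixed spaces (after the cofibrant--fibrant replacement of Proposition~\ref{replacements}, which we may harmlessly invoke), is in fact a space on which the simplicial monoid $\Aut^h_{\bullet}(X)$ acts freely and transitively up to homotopy. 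In other words: reduce to $w\cal T^{cf}(j)$, and then each $\operatorname{Hom}$-space is a torsor over a homotopy automorphism monoid.

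For the second assertion, once we are in $w\cal T^{cf}(j)$ --- where both source and target of any morphism are cofibrant and fibrant --- the inclusion $\Aut^h_{\bullet}(X)\rightarrow C_Xw_{\bullet}\cal T(j)$ of the one-object full simplicial subcategory into the full simplicial subcategory on all objects weakly equivalent to $X$ is a weak equivalence of simplicial categories precisely because all objects of $C_Xw_{\bullet}\cal T(j)$ are isomorphic in the homotopy category and, more to the point, because between any two such objects $X$ and $Y$ the mapping space $\operatorname{Hom}_{w_{\bullet}}(X,Y)$ is nonempty and connected. Here the key input is that $X$, $Y$ are cofibrant and fibrant: any weak equivalence $X\rightarrow Y$ between such objects is an honest homotopy equivalence (with homotopies living over $B$ and under $A$, by the model structure on $\cal T(j)$), and moreover the space of such homotopy equivalences is connected because it is acted on freely and transitively, up to homotopy, by the (connected) components structure of $\Aut^h_{\bullet}(X)$ --- equivalently, composition with a fixed weak equivalence $X\rightarrow Y$ gives a weak equivalence $\Aut^h_{\bullet}(X)\rightarrow \operatorname{Hom}_{w_{\bullet}}(X,Y)$. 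Granting that, the inclusion of a full simplicial subcategory which contains an object hitting every isomorphism class in the homotopy category, and for which all mapping spaces are ``nonempty and connected in the right way,'' is a Dwyer--Kan equivalence, hence a weak equivalence on nerves; this is the simplicial analogue of the elementary fact that a connected groupoid is equivalent to any of its vertex groups.

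Concretely I would organize the write-up as follows. Step 1: record that $\operatorname{Hom}_{w_{\bullet}\cal T(j)}(X,Y)$ is the (singular complex of the) mapping space of weak equivalences $X\rightarrow Y$ in $\cal T(j)$, and that composition is continuous, so $w_{\bullet}\cal T(j)$ is a genuine simplicial category. Step 2: using Proposition~\ref{replacements}, replace $w\cal T(j)$ by $w\cal T^{cf}(j)$ throughout, so that every morphism in sight is a homotopy equivalence in $\cal T(j)$ and every $\operatorname{Hom}$-space is either empty or a torsor (up to homotopy) over a homotopy-automorphism monoid. Step 3: deduce the first assertion from the bisimplicial-realization comparison, the essential point being that a functor of simplicial categories which is the identity on objects and is a weak equivalence on all mapping spaces induces a weak equivalence on nerves; the inclusion $w_0\cal T(j)\hookrightarrow w_{\bullet}\cal T(j)$ is \emph{not} of that form, so instead one shows directly that the realization of the nerve does not change when one replaces each discrete $\operatorname{Hom}$-set $\pi_0\operatorname{Hom}_{w_{\bullet}}(X,Y)$ by $\operatorname{Hom}_{w_{\bullet}}(X,Y)$, because each is a disjoint union of contractible pieces. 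Step 4: deduce the second assertion from Step 2 together with the observation that $C_Xw_{\bullet}\cal T(j)$ is a ``connected simplicial groupoid'' with vertex monoid $\Aut^h_{\bullet}(X)$. I expect the main obstacle to be Step 3 --- the careful handling of the bisimplicial set and the proof that passing from $\operatorname{Hom}$-\emph{sets} to $\operatorname{Hom}$-\emph{spaces} with contractible-or-empty components does not change the homotopy type of the nerve; this is exactly the Waldhausen-type argument alluded to in the name of the lemma, and it is where one must be scrupulous about the set-theoretic caveats raised in the introduction (working inside a small portion at a time, in the sense of Option Three).
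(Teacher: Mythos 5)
Your treatment of the first assertion has a genuine gap. The claim you lean on --- that each mapping space $\operatorname{Hom}_{w_{\bullet}\cal T(j)}(X,Y)$ ``deformation-retracts, functorially, onto a point whenever it is nonempty,'' and later that it ``is a disjoint union of contractible pieces'' --- is simply false. That space is the space of weak equivalences $X\rightarrow Y$ in $\cal T(j)$; when $X$ is cofibrant and fibrant it is a torsor (as you also say, contradicting yourself) over $\Aut^h_{\bullet}(X)$, whose components can have arbitrarily rich homotopy. There is no retraction to a point, the components are not contractible, and consequently the map $N_m w_0\cal T(j)\rightarrow N_m w_{\bullet}\cal T(j)$ is \emph{not} a weak equivalence for fixed $m$ in the nerve direction; so the ``bisimplicial-realization comparison'' you describe does not get off the ground.

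The mechanism that actually works, and the one the paper uses, is to fix the level $\ell$ in the \emph{other} direction and show that for each $\ell$ the degeneracy $s\colon w_0\cal T(j)\rightarrow w_\ell\cal T(j)$ is a weak equivalence of ordinary categories; the realization lemma for bisimplicial sets then identifies the diagonal with level $0$. To see that $s$ is a weak equivalence one cannot argue about mapping spaces at all (they aren't contractible); instead one exhibits, after restricting to cofibrant objects, a functor $F\colon w_\ell\cal T^c(j)\rightarrow w_\ell\cal T^c(j)$ built from the mapping cylinder $A\cup_{A\times I}(X\times I)$ together with a homotopy from $\id_{\Delta^\ell}$ to the constant map at $v_0$; this $F$ receives natural transformations from both $\id$ and $s\circ d$ (where $d$ is evaluation at $v_0$), so $s\circ d$ is weakly homotopic to the identity, while $d\circ s=\id$ on the nose. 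This explicit ``zig-zag through a cylinder'' is the missing idea.

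Your treatment of the second assertion is closer to correct in spirit --- it is indeed a Dwyer--Kan statement, and the paper invokes \cite[prop.\ 2.2.5]{W} or \cite[\S2]{DK} --- but the justification you give is again false: for cofibrant-fibrant $X\simeq Y$ the space $\operatorname{Hom}_{w_{\bullet}}(X,Y)$ need not be connected, so ``nonempty and connected'' is not the right condition and is not needed. What matters is only that the inclusion of the one-object full simplicial subcategory $\Aut^h_{\bullet}(X)\hookrightarrow C_Xw_{\bullet}\cal T^{cf}(j)$ is a Dwyer--Kan equivalence: being a full inclusion it is automatically fully faithful on mapping spaces, and it is essentially surjective by definition of $C_X$. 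The nontrivial input --- that every morphism in $C_Xw\cal T^{cf}(j)$ is a \emph{strong} homotopy equivalence (has a two-sided inverse up to $w_1$-homotopies), so that the Dwyer--Kan/Waldhausen machinery applies --- is exactly the model-category fact you gesture at, and that part is fine.

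\end{document}
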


\proof This is adapted from \cite{W}. For the first statement it will suffice to show that for every $m$ the degeneracy map $s\: w\cal T(j)=w_0\cal T(j)\rightarrow w_m\cal T(j)$ is a weak equivalence. We may restrict attention to the full subcategories $w_0\cal T^c(j)$ and $w_m\cal T^c(j)$ of cofibrant objects, by (a slight extension of) \ref{replacements}. Let $d\: w_m\cal T^c(j)\rightarrow w\cal T^c(j)$ be the map corresponding to the vertex $v_0$ of $\Delta^m$. Then $d\circ s$ is the identity and we have to show that $s\circ d$ is weakly homotopic to the identity. For this it is enough to exhibit a functor $F\: w_m\cal T^c(j)\rightarrow w_m\cal T^c(j)$ admitting natural maps from the identity and from $s\circ d$. Let $F$ send the object $A\rightarrow X\rightarrow B$ to
$$
A\rightarrow A\cup_{A\times I}(X\times I)\rightarrow B.
$$
Define $F$ on morphisms by means of a homotopy between the identity map $\Delta^m\rightarrow \Delta^m$ and the constant map to $v_0$. (The assumption that $A\rightarrow X$ is a cofibration is needed to ensure that the weak homotopy type of $A\cup_{A\times I}(X\times I) $ is not pathological.)

For the second statement, first replace $C_Xw_m\cal T(j)$ by the full subcategory $C_Xw_m\cal T^{cf}(j)$; again this leaves the homotopy type unchanged by
\ref{replacements}. In $C_Xw\cal T^{cf}(j)$ any morphism $f\: Y\rightarrow Z$ is a strong homotopy equivalence in the sense that there exists $g\: Z\rightarrow Y$ such that $f\circ g$ (resp.  $g\circ f$) and $1_Z$ (resp. $1_Y$) are the faces of a morphism in $w_1\cal T(j)$. The conclusion now follows by
\cite[prop.\ 2.2.5]{W}
or \cite[\S2]{DK}.
\endproof

Note that the simplicial monoid $\Aut_{\bullet}^h(X)$ is grouplike (that is, the monoid $\pi_0\Aut_{\bullet}^h(X)$ is a group), again because in the case of cofibrant and fibrant $X$ every level-zero morphism has a homotopy inverse. Thus its nerve is a delooping.

The category $\cal T(j)$ depends (almost) functorially on the map $j$ in two different ways, both of which will play a role. A square
$$
\xymatrix{
A_0\ar[r]\ar[d] & B_0\ar[d]\\
A_1\ar[r] & B_1
}
$$
determines a \it pushforward\rm\ functor
$$
\xymatrix {
\cal T(A_0\rightarrow B_0)\ar[r]  & \cal T(A_1\rightarrow B_1)}
$$
sending $X$ to $A_1\cup_{A_0}X$ and a \it pullback\rm\ functor
$$
\xymatrix{
\cal T(A_0\rightarrow B_0)& \cal T(A_1\rightarrow B_1)\ar[l]
}
$$
sending $Y$ to $B_0\times_{B_1}Y$.
These functors form a Quillen adjoint pair.

The pushforward does not preserve weak equivalences in general, but of course it does if it is applied only to cofibrant objects or if $A_0\rightarrow A_1$ is a cofibration. Thus if $A_0\rightarrow A_1$ is a cofibration we have a functor
$$
\xymatrix{
w\cal T(A_0\rightarrow B_0)\ar[r] & w\cal T(A_1\rightarrow B_1)
}
$$
and if not we still have a functor
$$
\xymatrix{
w\cal T^c(A_0\rightarrow B_0)\ar[r] & w\cal T^c(A_1\rightarrow B_1),
}
$$
which in view of \ref{replacements} is just as good for many purposes.

Likewise the pullback does not in general preserve weak equivalences, but it does if it is applied only to fibrant objects or if $B_0\rightarrow B_1$ is a fibration.

The reason for the word \lq almost\rq\ a few paragraphs back is that, for example, a composition of pushforward functors
$$
\xymatrix{
T(A_0\rightarrow B_0)\ar[r] & \cal T(A_1\rightarrow B_1)\ar[r] & \cal T(A_2\rightarrow B_2)
}
$$
is not equal, but merely isomorphic, to a pushforward functor
$$
\xymatrix{
T(A_0\rightarrow B_0)\ar[r] & \cal T(A_2\rightarrow B_2)\, .
}
$$
This failure of strict functoriality is something that we have to cope with.

We are now ready to define the space of Poincar\'e embeddings of $P$ in $N$. Suppose that $(P;\partial_0P,\partial_1P)$ and $(N;\partial_0N,\partial_2N)$ are $n$-dimensional Poincar\'e triads, with $\partial_0 N=\partial_0 P$. The maps
$$
\xymatrix{
\partial_1P\cup_{\partial_{01}P}\partial_2N\ar[r] &
P\cup_{\partial_{01}P}\partial_2N &
\partial_0 P\cup_{\partial_{01}P}\partial_2N =\partial N\ar[l]
}$$
induce pushforward and pullback functors:
$$
w\cal T(\partial_1P\cup_{\partial_{01}P}\partial_2N\rightarrow *)\rightarrow
w\cal T(P\cup_{\partial_{01}P}\partial_2N\rightarrow *)\rightarrow
w\cal T(\partial N\rightarrow *).
$$
The composition carries $\cal I_n^h(\partial_1P\cup_{\partial_{01}P}\partial_2N)$ into $\cal I_n^h(\partial N)$.

\begin{defn}The space $E^h(P,N)$ is the homotopy fiber, with respect to the point $N$, of (the map of realizations of nerves induced by) the functor
$$
\cal I_n^h(\partial_1P\cup_{\partial_{01}P}\partial_2N)\rightarrow\cal I_n^h(\partial N)
$$
defined above.
\end{defn}

Returning to general statements about pushforward and pullback, we have the following:

\begin{rem} \label{addend_remark} If in the map of maps $(A_0\rightarrow B_0)\rightarrow (A_1\rightarrow B_1)$ the maps $A_0\rightarrow A_1$ and $B_0\rightarrow B_1$ are weak equivalences, then the resulting Quillen adjoint  pair is a Quillen equivalence. In particular the nerves of $w\cal T(A_0\rightarrow B_0)$ and $w\cal T(A_1\rightarrow B_1)$ are then weakly equivalent.
\end{rem}

In many cases Quillen's ``Theorem B''
can be used to identify the homotopy fiber of a pushforward functor as the nerve of a category. The left Quillen fiber (comma category) of the pushforward functor
$$
\xymatrix{
\cal T(A_0\rightarrow B_0)\ar[r] & \cal T(A_1\rightarrow B_1)
}
$$
with respect to an object $X_1\in \cal T(A_1\rightarrow B_1)$ is (categorically) equivalent to $\cal T(A_0\rightarrow X_1\times_{B_1}B_0)$.  If $A_0\rightarrow A_1$ is a cofibration, so that the restricted pushforward functor $$
\xymatrix{
w\cal T(A_0\rightarrow B_0)\ar[r] & w\cal T(A_1\rightarrow B_1)
}
$$
is defined, then its left Quillen fiber is the portion of ${w\cal T(A_0\rightarrow X_1\times_{B_1}B_0)}$ consisting of objects $X_0$ such that the resulting map $A_1\cup_{A_0}X_0\rightarrow X_1$ is a weak equivalence. To apply
``Theorem B,'' we need to know that for any map $X_1\rightarrow X_1'$ in $w\cal T(A_1\rightarrow B_1)$ the induced functor between left Quillen fibers is a weak equivalence. This is the case, by a slight extension of
\ref{addend_remark}, as long as the induced map $X_1\times_{B_1}B_0\rightarrow X'_1\times_{B_1}B_0$ is an equivalence. In particular it is the case if $B_0\rightarrow B_1$ is a fibration. We conclude:

\begin{prop} \label{hofiber-pushforward}
If the maps $A_0\rightarrow A_1$ and $B_0\rightarrow B_1$ are respectively a cofibration and a fibration, then the homotopy fiber of the pushforward functor $w\cal T(A_0\rightarrow B_0)\rightarrow w\cal T(A_1\rightarrow B_1)$ with respect to an object $X_1$ of $\cal T(A_1\rightarrow B_1)$ is weakly equivalent to the portion of $w\cal T(A_0\rightarrow X_1\times_{B_1}B_0)$ consisting of objects $X_0$ such that the resulting map $A_1\cup_{A_0}X_0\rightarrow X_1$ is a weak equivalence.
\end{prop}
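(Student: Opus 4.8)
The plan is to recognize the pushforward map of nerves as one to which Quillen's Theorem B applies, in the form already anticipated in the discussion above: if $F\colon\cal C\to\cal D$ is a functor such that every morphism $d\to d'$ of $\cal D$ induces a weak equivalence of (nerves of) left Quillen fibers $F\!\downarrow\! d\to F\!\downarrow\! d'$, then the nerve of $F\!\downarrow\! d$ is the homotopy fiber over $d$ of the map of nerves induced by $F$. Here $F$ is the restricted pushforward functor $w\cal T(A_0\to B_0)\to w\cal T(A_1\to B_1)$, which is defined precisely because $A_0\to A_1$ is a cofibration, and $d$ is the object $X_1$.

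The first step is to identify the left Quillen fiber $F\!\downarrow\! X_1$. An object of it is a pair consisting of an object $X_0$ of $\cal T(A_0\to B_0)$ and a morphism $A_1\cup_{A_0}X_0\to X_1$ in $\cal T(A_1\to B_1)$; since the only morphisms in $w\cal T(A_1\to B_1)$ are weak equivalences, any such morphism is automatically a weak equivalence. Unwinding the adjunction between pushforward and pullback---the bookkeeping that produces the categorical equivalence $\cal T(A_0\to B_0)\!\downarrow\! X_1\simeq\cal T(A_0\to X_1\times_{B_1}B_0)$ cited above---a morphism $A_1\cup_{A_0}X_0\to X_1$ lying over $B_1$ and under $A_1$ is the same datum as a lift of the structural map $X_0\to B_0$ through $X_1\times_{B_1}B_0$, and a morphism of comma objects is exactly a weak equivalence of the resulting objects of $w\cal T(A_0\to X_1\times_{B_1}B_0)$. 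Thus $F\!\downarrow\! X_1$ is identified with the subcategory of $w\cal T(A_0\to X_1\times_{B_1}B_0)$ named in the statement, and it genuinely is a portion (a union of components), since the condition that $A_1\cup_{A_0}X_0\to X_1$ be a weak equivalence is preserved and reflected by weak equivalences of $X_0$, by two-out-of-three.

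The real content of the proof is the verification of the hypothesis of Theorem B. A morphism $X_1\to X_1'$ of $w\cal T(A_1\to B_1)$ is a weak equivalence, and under the identification above the induced functor $F\!\downarrow\! X_1\to F\!\downarrow\! X_1'$ becomes the pushforward functor attached to the map of maps $(A_0\to X_1\times_{B_1}B_0)\to(A_0\to X_1'\times_{B_1}B_0)$, restricted to the two relevant portions. Because $X_1\to X_1'$ lies over $B_1$ and $B_0\to B_1$ is a fibration, the map $X_1\times_{B_1}B_0\to X_1'\times_{B_1}B_0$ is a weak equivalence---it is the pullback of the weak equivalence $X_1\to X_1'$ along the fibration $X_1'\times_{B_1}B_0\to X_1'$, and $\cal T$ is right proper since every space is fibrant. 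So both component maps of that map of maps, namely $A_0=A_0$ and $X_1\times_{B_1}B_0\to X_1'\times_{B_1}B_0$, are weak equivalences, and a slight extension of Remark \ref{addend_remark} shows that the associated pushforward induces a weak equivalence of the nerves of the ambient categories $w\cal T$; two-out-of-three once more matches up the two portions, so it restricts to a weak equivalence between them.

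Granted the hypothesis, Theorem B identifies the nerve of $F\!\downarrow\! X_1$---equivalently, the nerve of the portion of $w\cal T(A_0\to X_1\times_{B_1}B_0)$ described in the statement---with the homotopy fiber of the pushforward map over $X_1$, which is the assertion. The step I expect to need the most care is the Theorem B input of the previous paragraph: it is exactly why one must pass to the $w$-categories, so that the morphisms to be tested are all weak equivalences, and it is where right properness of $\cal T$ and the fibration hypothesis on $B_0\to B_1$ are used. A purely foundational point, to be dispatched by the conventions of the introduction, is that all of the categories here are large, so ``Theorem B'' must be read in the appropriate large-category sense.
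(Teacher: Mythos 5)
Your proof is correct and follows essentially the same route as the paper's: identify the left Quillen fiber over $X_1$ as the stated portion of $w\cal T(A_0\to X_1\times_{B_1}B_0)$, then verify the Theorem~B hypothesis by noting that a weak equivalence $X_1\to X_1'$ over $B_1$ pulls back along the fibration $B_0\to B_1$ to a weak equivalence of base changes (the paper cites the same fact via a slight extension of Remark~\ref{addend_remark}; your appeal to right properness of $\cal T$ is the same observation made more explicit). No substantive difference.
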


There is a dual statement, with a dual proof:

\begin{prop} \label{hofiber-pullback} With the same hypothesis, the homotopy fiber of the pullback functor $w\cal T(A_1\rightarrow B_1)\rightarrow w\cal T(A_0\rightarrow B_0)$ with respect to an object $X_0$ of $\cal T(A_0\rightarrow B_0)$ is weakly equivalent to the portion of $w\cal T(A_1\cup_{A_0}X_0\rightarrow B_1)$ consisting of objects $X_1$ such that the resulting map $X_0\rightarrow X_1\times_{B_1}B_0$ is a weak equivalence.
\end{prop}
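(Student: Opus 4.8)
The plan is to run the proof of \ref{hofiber-pushforward} in dual fashion, applying Quillen's ``Theorem B'' to the pullback functor
$$
G\: w\cal T(A_1\rightarrow B_1)\longrightarrow w\cal T(A_0\rightarrow B_0),\qquad Y\longmapsto B_0\times_{B_1}Y,
$$
which is defined on these categories of weak equivalences precisely because $B_0\rightarrow B_1$ is a fibration. Since $G$ is a right adjoint, the comma category to use over an object $X_0$ of $\cal T(A_0\rightarrow B_0)$ is the \emph{right} Quillen fiber $(X_0\downarrow G)$: its objects are the pairs $(Y,g)$ with $Y$ an object of $\cal T(A_1\rightarrow B_1)$ and $g\: X_0\rightarrow B_0\times_{B_1}Y$ a morphism of $w\cal T(A_0\rightarrow B_0)$, hence already a weak equivalence, and its morphisms are the evident ones. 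By Theorem B, in the form dual to the one used above (right Quillen fibers, contravariant base change), the nerve of $(X_0\downarrow G)$ will be the homotopy fiber of $G$ over $X_0$, provided that for every morphism $X_0\rightarrow X_0'$ of $w\cal T(A_0\rightarrow B_0)$ the precomposition functor $(X_0'\downarrow G)\rightarrow(X_0\downarrow G)$ is a weak equivalence.

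The next step is to identify $(X_0\downarrow G)$ with the portion of $w\cal T(A_1\cup_{A_0}X_0\rightarrow B_1)$ named in the statement, the structural map $A_1\cup_{A_0}X_0\rightarrow B_1$ being the one assembled from $A_1\rightarrow B_1$ and $X_0\rightarrow B_0\rightarrow B_1$ (which agree on $A_0$ by commutativity of the given square). Given $(Y,g)$, composing $g$ with the projection to $Y$ and combining with $A_1\rightarrow Y$ yields, by the universal property of the pushout, a map $A_1\cup_{A_0}X_0\rightarrow Y$, so that $Y$ becomes an object of $\cal T(A_1\cup_{A_0}X_0\rightarrow B_1)$; conversely an object $X_1$ of the latter category yields the pair $(X_1,g)$ with $g\: X_0\rightarrow B_0\times_{B_1}X_1$ built from $X_0\rightarrow A_1\cup_{A_0}X_0\rightarrow X_1$ and $X_0\rightarrow B_0$. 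These assignments define an equivalence of categories under which the requirement that $g$ be a weak equivalence becomes exactly the requirement that $X_0\rightarrow X_1\times_{B_1}B_0$ be a weak equivalence; and because $B_0\rightarrow B_1$ is a fibration this last requirement is invariant under weak equivalence of $X_1$, so it does cut out a portion.

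It remains to verify the hypothesis of Theorem B. Under the identification above, a morphism $\phi\: X_0\rightarrow X_0'$ induces the map $1\cup\phi\: A_1\cup_{A_0}X_0\rightarrow A_1\cup_{A_0}X_0'$, and the precomposition functor $(X_0'\downarrow G)\rightarrow(X_0\downarrow G)$ goes over into the pullback functor $w\cal T(A_1\cup_{A_0}X_0'\rightarrow B_1)\rightarrow w\cal T(A_1\cup_{A_0}X_0\rightarrow B_1)$ associated to the map of maps with components $1\cup\phi$ and $\mathrm{id}_{B_1}$. Since $A_0\rightarrow A_1$ is a cofibration, each of the two pushouts is a homotopy pushout and $1\cup\phi$ is a weak equivalence; hence both components of this map of maps are weak equivalences, so by a slight extension of \ref{addend_remark} the pullback functor induces a weak equivalence of nerves, and this restricts to the relevant portions. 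That gives the hypothesis, and Theorem B then yields the conclusion.

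I expect no conceptual obstacle: there is no new idea here beyond the proof of \ref{hofiber-pushforward}, and the content is bookkeeping. The points that genuinely require care are keeping the variance straight (right Quillen fibers and a contravariant base-change condition, dual to the left Quillen fibers and covariant condition used before), making sure that every structural map occurring in the identification of the comma category commutes on the nose and not merely up to homotopy, and checking that both that identification and the Theorem B base-change functors respect the portions cut out by the weak-equivalence conditions --- the role of the fibration hypothesis $B_0\rightarrow B_1$ being precisely to secure this, dual to the way the cofibration hypothesis $A_0\rightarrow A_1$ secures that the base-change functors are weak equivalences.
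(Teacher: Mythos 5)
Your proposal is correct and is exactly the ``dual proof'' that the paper invokes in one line for \ref{hofiber-pullback}: you run Quillen's Theorem~B for right Quillen fibers of the pullback (right adjoint) functor, identify the comma category $(X_0\downarrow G)$ with the stated portion of $w\mathcal T(A_1\cup_{A_0}X_0\rightarrow B_1)$, and use the cofibration hypothesis on $A_0\rightarrow A_1$ to verify the Theorem~B base-change condition, dually to the way the paper uses the fibration hypothesis on $B_0\rightarrow B_1$ in the proof of \ref{hofiber-pushforward}. This matches the paper's intent; no gap.
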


\begin{rem}The fact that we need a left Quillen fiber to describe the homotopy fiber of pushforward and a right Quillen fiber to describe the homotopy fiber of pullback is a hindrance to giving a simple description of a space of Poincar\'e embeddings as the nerve of a category.
\end{rem}

We record separately the form that the last Proposition takes in the special case when the map $B_0\rightarrow B_1$ is an identity map $1_B$. In this case the pullback functor $w\cal T(A_1\rightarrow B)\rightarrow w\cal T(A_0\rightarrow B)$ is simply a forgetful functor, composition with $A_0\rightarrow A_1$. Changing the names of the spaces from $(A_0,A_1,B,X)$ to $(A,C,D,B)$, we have a statement about a square diagram of spaces
$$
\xymatrix{
A\ar[r]\ar[d] & B\ar[d]\\
 C\ar[r] & D
 }
$$

\begin{prop} \label{pre-function-space} If $A\rightarrow C$ is a cofibration, then the homotopy fiber of the pullback functor $w\cal T(C\rightarrow D)\rightarrow w\cal T(A\rightarrow D)$ with respect to the object $B$ of $\cal T(A\rightarrow D)$ is weakly equivalent to $w\cal T(C\cup_{A}B\rightarrow D,B\sim )$.
\end{prop}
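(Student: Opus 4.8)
The plan is to derive this Proposition as the special case of Proposition \ref{hofiber-pullback} in which the map $B_0\rightarrow B_1$ is an identity, exactly as announced in the paragraph preceding the statement. First I would specialize the setting of \ref{hofiber-pullback} by taking $A_0=A$, $A_1=C$, and $B_0=B_1=D$, letting $A_0\rightarrow A_1$ be the given cofibration $A\rightarrow C$ and letting $B_0\rightarrow B_1$ be $1_D$; since an identity map is a fibration, the hypotheses of \ref{hofiber-pullback} hold. I would then note that, because $B_0\rightarrow B_1$ is $1_D$, the pullback functor $Y\mapsto B_0\times_{B_1}Y$ is $Y\mapsto D\times_DY$, which is canonically isomorphic to $Y$; hence the pullback functor $w\cal T(C\rightarrow D)\rightarrow w\cal T(A\rightarrow D)$ is precisely the forgetful functor given by composition with $A\rightarrow C$, the functor named in the statement to be proved.

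Next I would take the distinguished object of $\cal T(A\rightarrow D)$ to be $B$ itself. Proposition \ref{hofiber-pullback} then identifies the homotopy fiber with the portion of $w\cal T(A_1\cup_{A_0}X_0\rightarrow B_1)=w\cal T(C\cup_AB\rightarrow D)$ consisting of objects $X$ for which the canonical map $X_0\rightarrow X\times_{B_1}B_0$ is a weak equivalence. Since $B_0=B_1=D$ with $1_D$ between them, the pullback $X\times_{B_1}B_0=X\times_DD$ is canonically $X$, and under this identification that canonical map becomes the composite $B\rightarrow C\cup_AB\rightarrow X$ of the canonical map into the pushout with the structural map of the factorization $C\cup_AB\rightarrow X\rightarrow D$. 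Thus the condition on $X$ is exactly that this structural map restrict to a weak equivalence on $B$, which in the notation of Example \ref{left-inverses} says that $X$ lies in $w\cal T(C\cup_AB\rightarrow D,B\sim)$. That gives the asserted weak equivalence.

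The only real content is the bookkeeping in the previous paragraph --- checking that the canonical maps appearing in \ref{hofiber-pullback} specialize correctly once $B_0\rightarrow B_1$ is an identity, in particular that ``$X_0\rightarrow X\times_{B_1}B_0$ is a weak equivalence'' unwinds to ``the structural map $B\rightarrow X$ is a weak equivalence''. I do not expect any genuine obstacle; this Proposition is recorded separately from \ref{hofiber-pullback} only because it is the form convenient to quote later, leading via \ref{function-space} to the function-space interpretation mentioned in Example \ref{left-inverses}.
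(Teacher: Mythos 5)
Your specialization of Proposition \ref{hofiber-pullback} (taking $B_0=B_1=D$ with $1_D$, $A_0=A$, $A_1=C$, $X_0=B$, and unwinding the pullback $X_1\times_D D\cong X_1$) is exactly how the paper obtains \ref{pre-function-space}; the paper even flags it explicitly as a renaming of variables in that special case. Correct, and essentially identical to the paper's (implicit) proof.
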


We next show that in this same special case there is also a useful description of the homotopy fiber as a function space, under the additional assumption that $B\rightarrow D$ is a fibration. This will justify an assertion made in Example \ref{left-inverses}.

Notice that in the most naive sense the fiber over $B$ of this pullback functor is precisely the set of all solutions  of the lifting/extension problem posed by the square,
in other words the set of continuous maps $C\rightarrow B$ that make the two triangles commute. Let $L$ be the simplicial set in which a $p$-simplex is a family of such maps parametrized continuously by $\Delta^p$ in the sense that the associated map $\Delta^p\times C\rightarrow B$ is continuous.

\begin{prop} \label{function-space}
In the square above, assume that $A\rightarrow C$ is a cofibration and that $B\rightarrow D$ is a fibration. Then the homotopy fiber with respect to $B$ of the pullback map $w\cal T(C\rightarrow D)\rightarrow w\cal T(A\rightarrow D)$ is equivalent to the space $L$ of lifting/extensions defined above.
\end{prop}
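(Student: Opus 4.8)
The plan is to bridge Proposition~\ref{pre-function-space} and the claimed identification of the homotopy fiber with $L$. By Proposition~\ref{pre-function-space}, the homotopy fiber in question is weakly equivalent to the nerve of $w\cal T(C\cup_A B\rightarrow D,\,B\sim)$, the portion of $w\cal T(C\cup_A B\rightarrow D)$ consisting of factorizations $C\cup_A B\rightarrow X\rightarrow D$ for which the structural map $B\rightarrow X$ (coming from $B\rightarrow C\cup_A B$) is a weak equivalence. So it suffices to produce a weak equivalence between the nerve of this category and the simplicial set $L$. The key observation is that $B\rightarrow D$ being a fibration lets us exhibit $B$ itself as a \emph{terminal-up-to-homotopy} object among the relevant factorizations: any factorization $C\cup_A B\rightarrow X\rightarrow D$ in which $B\rightarrow X$ is an equivalence admits an essentially unique map \emph{to} the factorization $C\cup_A B\rightarrow B\rightarrow D$, because solving the lifting problem $X\rightarrow B$ over $D$ extending the given map $B\rightarrow X$ (which we may invert) is exactly a lift against the fibration $B\rightarrow D$, and the space of such lifts is contractible.

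Concretely, I would run the following steps. First, let $\cal C = w\cal T(C\cup_A B\rightarrow D,\,B\sim)$ and let $\cal C_B\subset \cal C$ be the full subcategory on the single object $\beta := (C\cup_A B\rightarrow B\rightarrow D)$. Using Lemma~\ref{Waldhausen_adaptation} (or its method) I would pass to the simplicial category $w_\bullet\cal C$, where morphisms are families parametrized by simplices; this is where $L$ naturally appears. Second, I would show that the simplicial monoid $\Aut_\bullet^h(\beta)$ is weakly contractible: an endomorphism of $\beta$ in $w_m$ is a map $\Delta^m\times B\rightarrow B$ over $D$ and under $C\cup_A B$, fixing the image of $B$ up to the cofibration/fibration data, and the fibration property of $B\rightarrow D$ together with the cofibration property of $A\rightarrow C$ (hence of $B\rightarrow C\cup_A B$) forces the space of such to be contractible. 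Third — the crux — I would show that the inclusion $\cal C_B\rightarrow \cal C$ is a weak equivalence of nerves, i.e.\ that $\beta$ is a homotopy-terminal object. For this, given any object $(C\cup_A B\rightarrow X\xrightarrow{p} D)$ with $B\xrightarrow{\sim} X$, the set of morphisms to $\beta$ is the set of maps $X\rightarrow B$ over $D$ restricting correctly on $C\cup_A B$; since $B\rightarrow D$ is a fibration this mapping space is contractible (it is a fiber of a trivial fibration of function spaces, using that $B\rightarrow X$ is a trivial cofibration after cofibrant replacement, or directly a lifting argument). A homotopy-terminal object makes the inclusion of its one-object full subcategory a weak equivalence on nerves. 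Finally, identifying $\Aut_\bullet^h(\beta)$'s delooping: since $\Aut_\bullet^h(\beta)$ is contractible, the component $C_\beta w_\bullet \cal C$ is contractible too, and assembling these identifications shows $\operatorname{Nerve}(\cal C)\simeq L$, because $L$ is by construction the space of maps-into-$\beta$ from the fixed source $(C\cup_A B, \text{its maps})$, which is exactly the mapping-space description that the homotopy-terminality collapses onto.

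I expect the main obstacle to be Step three, the homotopy-terminality of $\beta$, handled cleanly. The subtlety is that $X$ need not be cofibrant or fibrant as a factorization object, so "the space of maps $X\rightarrow B$ over $D$ is contractible" is not literally a statement about a trivial Serre fibration unless one first arranges cofibrancy of $B\rightarrow X$ and uses the fibration $B\rightarrow D$. The correct move is: after restricting (via Proposition~\ref{replacements}) to cofibrant objects, the map $B\rightarrow C\cup_A B$ is a cofibration (pushout of the cofibration $A\rightarrow C$), and when $B\rightarrow X$ is in addition a weak equivalence it is a trivial cofibration, so lifting against the fibration $B\rightarrow D$ gives a retraction $X\rightarrow B$ and, more, shows the simplicial set of such retractions-under-$(C\cup_A B)$-over-$D$ is contractible by the usual obstruction-theoretic argument with $\Delta^m\times X$. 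A secondary, purely bookkeeping difficulty is matching the parametrized ($w_\bullet$) morphism sets with the continuity condition defining $L$ — this is routine once one writes a $p$-simplex of maps $X\rightarrow B$ as a map $\Delta^p\times X\rightarrow B$ — and verifying that when $X=B$ this recovers precisely the $p$-simplices of $L$.
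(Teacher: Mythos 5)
Your proposal departs from the paper's argument, and it contains a genuine error: the claim that the simplicial monoid $\Aut_\bullet^h(\beta)$ is weakly contractible is false, because you compute it on the wrong object.

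Here is the problem in detail. For the conclusion of Lemma~\ref{Waldhausen_adaptation} (that the inclusion $\Aut_\bullet^h(X)\rightarrow C_Xw_\bullet\cal T(j)$ is a weak equivalence) you need $X$ to be fibrant \emph{and cofibrant} in the factorization category. Your object $\beta=(C\cup_AB\rightarrow B\rightarrow D)$ is not cofibrant: its structural map $C\cup_AB\rightarrow B$ restricts to the identity on the summand $B$ of $C\cup_AB$, hence is typically not injective and certainly not a cofibration. With this non-cofibrant $\beta$ the automorphism monoid is forced to be a single point (a self-map under $C\cup_AB$ must be the identity on $B$), but that triviality is an artifact of the non-cofibrancy, and it does not transfer to a cofibrant replacement $\tilde\beta$. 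For a concrete test, take $A=\emptyset$, $C=B=S^1$, $D=*$, and a chosen degree-$d$ map $f\colon C\rightarrow B$. After cofibrant replacement (e.g.\ taking the mapping cylinder of $(f,\mathrm{id})\colon S^1\sqcup S^1\rightarrow S^1$), the monoid $\Aut_\bullet^h(\tilde\beta)$ of self-equivalences rel $S^1\sqcup S^1$ has the weak homotopy type of $\mathbb Z\simeq\Omega S^1$, not a point --- precisely the loop space of the degree-$d$ component of $L=\mathrm{Map}(S^1,S^1)$, which is what the proposition requires. Your argument, taken at face value, would make each component of the nerve of $w\cal T(C\cup_AB\rightarrow D,\,B\sim)$ contractible, whereas the proposition says it is weakly equivalent to the corresponding component of $L$, which in general has nontrivial homotopy.

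There is a second, related conceptual gap in the final step. Even if $\beta$ were homotopy-terminal in $\cal C$, that would show the nerve of $\cal C$ is contractible, not that it is $L$. The sentence ``$L$ is by construction the space of maps-into-$\beta$ from the fixed source'' misreads the definition: $L$ is the space of lifting/extensions $C\rightarrow B$ (it is defined without reference to any chosen $\beta$, and it exists before one picks an object of $\cal C$ at all). A more basic symptom of the mismatch: the full simplicial subcategory of $w_\bullet\cal C$ on the objects of the form $\beta_f$ (one per $f\in L_0$) has only identity morphisms, because a parametrized self-map under $C\cup_AB$ is forced to be constantly the identity on the $B$ summand; so its nerve is the constant simplicial set on $L_0$, which is not $L$. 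The continuous parametrization that makes $L$ nondiscrete lives in the \emph{objects} (families of lifts), not in the morphisms of $w_\bullet\cal C$, and that is exactly what your set-up cannot see.

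The paper's proof avoids all of this. It defines a functor $\hat L$ from fibrant factorizations of $B\rightarrow D$ to simplicial sets (with $\hat L(B)=L$), shows by obstruction theory that $\hat L$ preserves weak equivalences, and then forms a bisimplicial one-sided bar construction $\tilde L$ over the indexing category $w\cal T^f(B\rightarrow D,\,B\sim)$. On one hand $\tilde L$ is the homotopy colimit of a diagram of equivalences with initial object $B$, so $\tilde L\simeq L$; on the other hand $\tilde L$ is the nerve of a simplicial category $F_\bullet$, whose level zero is (a fibrant variant of) the portion of $w\cal T(C\cup_AB\rightarrow D)$ already identified with the homotopy fiber via Proposition~\ref{pre-function-space}, and the inclusion $F_0\rightarrow F_\bullet$ is a weak equivalence by the technique of Lemma~\ref{Waldhausen_adaptation}. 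That bar-construction mediation is precisely what your approach is missing, and I do not see how to replace it by a homotopy-terminality argument.
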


\proof To any factorization $A\rightarrow X\rightarrow D$ such that $X\rightarrow D$ is a fibration, we can functorially associate a simplicial set $\hat L(X)$, defined just like $L$ but with $X$ substituted for $B$. By obstruction theory the functor $\hat L$ preserves weak equivalences. Consider the following bisimplicial set $\tilde L$, a one-sided bar construction. $\tilde L_{p,q}$ is the coproduct, over all $q$-simplices $X_0\rightarrow\dots\rightarrow X_q$ in the nerve of $w\cal T^f(B\rightarrow D,B\sim)$, of $\hat L_p(X_0)$.

On the one hand the diagonal simplicial set of $\tilde L$ is the homotopy colimit of a diagram, a diagram of simplicial sets with indexing category $w\cal T^f(B\rightarrow D,B\sim)$. This is a diagram in which every arrow is a weak equivalence, and in which the indexing category has initial object $B$, so $\tilde L$ is equivalent to $\hat L(B)=L$.

On the other hand, $\tilde L$ can also be seen as the nerve of a simplicial category $F_{\bullet}$. An object of $F_p$ consists of a space $X$, a factorization $A\rightarrow X\rightarrow D$ such that $X\rightarrow D$ is a fibration, and a $\Delta^p$-parametrized family of weak equivalences $B\rightarrow X$ compatible with the maps to $D$ and the maps from $A$. For each $p$ the degeneracy functor $F_0\rightarrow F_p$ is  a weak equivalence, by an argument like that in the proof of \ref{Waldhausen_adaptation}. Therefore the inclusion $F_0\rightarrow F_{\bullet}$ is a weak equivalence. This completes the proof, because $F_0$ is the portion of $w\cal T(C\cup_{A}B\rightarrow D)$ that has already been identified with the homotopy fiber of pullback, or rather it is  the equivalent corresponding portion of $w\cal T^f(C\cup_{A}B\rightarrow D)$.
\endproof

To sum up, there is a chain of weak equivalences
$$
\xymatrix{
\hofiber
(w\cal T(C\rightarrow D)\rightarrow w\cal T(A\rightarrow D)) \\
w\cal T(C\cup_{A}B\rightarrow D,B\sim )\ar[u] \\
w\cal T^f(C\cup_{A}B\rightarrow D,B\sim )\ar[u]\ar[d] \\
\text{\rm lifting/extensions}
}
$$
Of course, each of these equivalences is suitably natural.

\begin{ex} \label{example-lift-extend}
The space $E^h(P,N)$ of Poincar\'e embeddings is by definition the homotopy fiber of the composition (pushforward followed by pullback)
$$
\xymatrix{
\cal I^h_n(\partial_1P\cup\partial_2N)\ar[r] & \tilde{\cal I}^h_n(P\cup\partial_2N)\ar[r] & \cal I^h_n(\partial_0P\cup\partial_2N)=\cal I^h_n(\partial N)
}
$$
Here $\tilde{\cal I}^h_n(P\cup\partial_2N)$ stands for that portion of $w\cal T(P\cup\partial_2N\rightarrow *)$ which is the preimage of the portion $\cal I^h_n(\partial_0P\cup\partial_2N)$ of $w\cal T(\partial_0P\cup\partial_2N\rightarrow *)$. The symbol $\tilde{}$ is a reminder that $P\cup\partial_2N$ is not a Poincar\'e space.
If $\partial_0P\rightarrow P$ is a cofibration, then the homotopy fiber of the second (pullback) map above can be described as the space of solutions of the lifting/extension problem
$$
\xymatrix{
\partial_0P\ar[r]\ar[d] & N\ar[d]\\
P\ar[r] & {*}
}
$$
or in other words the space $F(P,N)$ of all maps $P\rightarrow N$ extending the given map $\partial_0P\rightarrow N$. (To emphasize the boundary condition, we may sometimes denote such a space by $F(P,N \rel \partial_0P)$.) Therefore we have, up to natural equivalence, a map
$E^h(P,N)\rightarrow F(P,N)$. This may be thought of as the \lq inclusion\rq\ of the space of embeddings fixed on $\partial_0P$ into the space of all functions fixed on $\partial_0P$. Its homotopy fiber is equivalent to that of the pushforward map above.
\end{ex}

One simple consequence of \ref{function-space} is the following:

\begin{prop} \label{pullback-functor-diagram} Let
$$
\xymatrix{
A\ar[r]\ar[d] & B\ar[d]\\
C\ar[r] & D
}
$$
be a pushout square of cofibrations. Then the diagram of pullback functors
$$
\xymatrix{
\cal T(A\rightarrow *)& \cal T(B\rightarrow *)\ar[l]\\
\cal T(C\rightarrow *)\ar[u] & \cal T(D\rightarrow *)\ar[l]\ar[u]
}
$$
is $\infty$-cartesian.
\end{prop}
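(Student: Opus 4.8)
The plan is to show that the square of nerves is a homotopy pullback --- which is what ``$\infty$-cartesian'' asks of a square --- by comparing homotopy fibers of the two vertical restriction functors. Rewriting the diagram in the form
$$
\xymatrix{
\cal T(D\to *)\ar[r]\ar[d] & \cal T(B\to *)\ar[d]\\
\cal T(C\to *)\ar[r] & \cal T(A\to *)
}
$$
with every arrow the evident pullback (restriction) functor, I would use the standard criterion: the square is homotopy cartesian provided that for every object $Z$ of $\cal T(C\to *)$, with image $\bar Z$ in $\cal T(A\to *)$ under restriction along $A\to C$, the map from the homotopy fiber over $Z$ of the left vertical functor to the homotopy fiber over $\bar Z$ of the right vertical functor, induced by restriction along $B\to D$, is a weak equivalence.

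The second step is to name these two homotopy fibers. Since $C\to D$ and $A\to B$ are cofibrations, while the maps $Z\to *$ and $\bar Z\to *$ are (trivially) fibrations, Proposition \ref{function-space} applies to each vertical functor: it identifies the homotopy fiber over $Z$ with the space of maps $D\to Z$ whose composite with $C\to D$ is the structure map of $Z$, and the homotopy fiber over $\bar Z$ with the space of maps $B\to Z$ whose composite with $A\to B$ is the structure map of $\bar Z$, i.e.\ the composite $A\to C\to Z$. (As in \ref{function-space}, each of these mapping spaces is regarded as the simplicial set of continuously $\Delta^p$-parametrized families.) Tracing through the definitions, the comparison map becomes $f\mapsto f\circ(B\to D)$.

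The third step --- the only one in which the pushout hypothesis plays a role --- is to observe that this comparison map is then an \emph{isomorphism} of mapping spaces, not merely a weak equivalence. By the universal property of $D=B\cup_A C$, a map $D\to Z$ is the same datum as a pair of maps $B\to Z$ and $C\to Z$ agreeing on $A$; imposing that the map out of $C$ be the structure map of $Z$ leaves precisely a map $B\to Z$ that agrees, after restriction to $A$, with that structure map, which is exactly the condition cutting out the second mapping space. This bijection is $f\mapsto f\circ(B\to D)$ and is compatible with $\Delta^p$-parametrization, so it is an isomorphism, hence a weak equivalence. Therefore every fiber comparison is a weak equivalence and the square is homotopy cartesian.

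The step I expect to demand the most care is none of these, but the bookkeeping connecting them: verifying that the two separate invocations of \ref{function-space} are compatible with the comparison map, i.e.\ that under the equivalences it furnishes the map of homotopy fibers induced by restriction along $B\to D$ genuinely becomes $f\mapsto f\circ(B\to D)$ and is not merely homotopic to it in some uncontrolled way. I would handle this through the naturality of the equivalence in \ref{function-space} (asserted in the discussion after its proof): the maps $A\to C$ and $B\to D$, together with the identity maps of $Z$ and of $*$, constitute a morphism between the two indexing squares to which \ref{function-space} is being applied, and naturality for that morphism identifies the one comparison map with the other. With that compatibility in hand, nothing further is required.
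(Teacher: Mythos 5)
Your argument is correct and is in substance the same as the paper's one-sentence proof: identify the homotopy fibers of the two parallel pullback functors with the function spaces $F(D,Z\rel C)$ and $F(B,Z\rel A)$ via Proposition~\ref{function-space}, and note that the pushout property of $D=B\cup_A C$ makes these two mapping spaces isomorphic, naturally in the comparison. You have merely made explicit the compatibility/naturality step that the paper compresses into ``and of course the two (chains of) equivalences are compatible.''
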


\proof For a (fibrant) object $X$ of $\cal T(C\rightarrow *)$, the homotopy fibers of the two horizontal maps above are weakly equivalent to two isomorphic function spaces $F(B,X
\rel  A)$ and $F(D,X \rel C)$, and of course the two (chains of) equivalences are compatible.
\endproof

The same approach yields:

\begin{prop} \label{pullback-pushforward-diagram} Again let
$$
\xymatrix{
A\ar[r]\ar[d] & B\ar[d]\\
C\ar[r] & D
}
$$
be a pushout square of cofibrations. Assume that $A\rightarrow B$ is homotopically $d$-dimensional and that $A\rightarrow C$ is $k$-connected. Then the diagram
$$
\xymatrix{
\cal T(A\rightarrow *)\ar[d]& \cal T(B\rightarrow *)\ar[l]\ar[d]\\
\cal T(C\rightarrow *) & \cal T(D\rightarrow *)\ar[l]
}
$$
(pullback functors horizontally and pushout functors vertically)
is ${(k-d)}$-cartesian.
\end{prop}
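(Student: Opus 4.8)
The plan is to follow the template used for \ref{pullback-functor-diagram}: verify cartesianness fibrewise over the two horizontal (restriction) functors, identify the resulting homotopy fibres as mapping spaces by means of \ref{function-space}, and then estimate the connectivity of the comparison map by elementary obstruction theory. Everything is formal except the last step.

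First I would invoke the standard fibrewise criterion for cartesianness of a square (see \ref{fiber-cube-connectivity}, or Appendix B): the displayed square is $(k-d)$-cartesian precisely when, for every object $X_0$ of $\cal T(A\rightarrow *)$, the canonical map from the homotopy fibre over $X_0$ of the top restriction functor $w\cal T(B\rightarrow *)\rightarrow w\cal T(A\rightarrow *)$ to the homotopy fibre over $C\cup_A X_0$ of the bottom restriction functor $w\cal T(D\rightarrow *)\rightarrow w\cal T(C\rightarrow *)$ is $(k-d)$-connected. Here $C\cup_A X_0$ is the image of $X_0$ under the left-hand pushforward functor, and the square of functors commutes on the nose, since $D=B\cup_A C$ forces $D\cup_B Z = C\cup_A Z$ for any $Z$ under $B$ by the pasting law for pushouts. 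Because $A\rightarrow C$ is a cofibration, $C\cup_A X_0$ always computes the homotopy pushout, so no replacement of $X_0$ is needed; if one prefers, \ref{replacements} permits a reduction to cofibrant--fibrant objects anyway.

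Next I would identify the two homotopy fibres via \ref{function-space}, whose hypotheses hold here: the target $*$ is fibrant, $A\rightarrow B$ is a cofibration, and $C\rightarrow D$ is a cofibration because it is a cobase change of $A\rightarrow B$. Thus the fibre over $X_0$ is the space $F(B,X_0 \rel A)$ of maps $B\rightarrow X_0$ extending the structure map $A\rightarrow X_0$, and the fibre over $C\cup_A X_0$ is $F(D,C\cup_A X_0 \rel C)$. Since $D=B\cup_A C$, a map $D\rightarrow C\cup_A X_0$ extending $C\rightarrow C\cup_A X_0$ is exactly a map $B\rightarrow C\cup_A X_0$ extending $A\rightarrow C\rightarrow C\cup_A X_0$, so the second fibre is canonically $F(B,C\cup_A X_0 \rel A)$; under these (natural) identifications the comparison map is post-composition with the map $u\colon X_0\rightarrow C\cup_A X_0$. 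This $u$ is the cobase change of the $k$-connected cofibration $A\rightarrow C$ along $A\rightarrow X_0$, hence is itself $k$-connected (Appendix A).

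The remaining point, and the one carrying the real content, is the mapping-space estimate: when $A\rightarrow B$ is homotopically $d$-dimensional, post-composition with a $k$-connected map induces a $(k-d)$-connected map $F(B,X_0 \rel A)\rightarrow F(B,C\cup_A X_0 \rel A)$. I would prove this in the usual way. Replacing $u$ by a fibration, the homotopy fibre of this map over a point $\psi$ becomes the space of sections over $B$, extending a fixed section over $A$, of the fibration $\psi^{*}E\rightarrow B$, whose fibres are the homotopy fibre of $u$ and hence are $(k-1)$-connected; since the pair $(B,A)$ is homotopically $d$-dimensional, the skeleton-by-skeleton obstruction argument shows this section space is $(k-1-d)$-connected, so the comparison map is $(k-d)$-connected, as required. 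The main obstacle is precisely this step: running the obstruction theory correctly with twisted coefficients in the possibly non-simply-connected case and in the presence of the retract-up-to-equivalence hiding inside ``homotopically $d$-dimensional'' — but this is exactly the standard machinery recorded in Appendix A, and it is where the two hypotheses on $A\rightarrow B$ and $A\rightarrow C$ are consumed; the rest of the proof is bookkeeping with the definitions of pushforward, pullback, and homotopy fibre.
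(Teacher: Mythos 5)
Your proposal is correct and follows essentially the same route as the paper's proof: identify the homotopy fibers of the two horizontal (pullback) maps via \ref{function-space} as $F(B,X\rel A)$ and $F(D,C\cup_A X\rel C)=F(B,C\cup_A X\rel A)$, observe that the comparison map is post-composition with the $k$-connected map $X\rightarrow C\cup_A X$, and invoke the Appendix A obstruction-theory estimate to get $(k-d)$-connectedness. You spell out a few points the paper leaves implicit (the fibrewise criterion, the cobase-change argument for $k$-connectedness of $u$, the pasting law for pushouts), but the mathematical content is the same.
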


(We are permitting ourselves some laxity here, in that the square commutes only up to canonical isomorphism.)

\proof For a fibrant object $X$ of $\cal T(A\rightarrow *)$, the
homotopy fiber of the upper horizontal map is equivalent to $F(B,X
\rel  A)$. The homotopy fiber of the lower horizontal map is equivalent
to $F(D,C\cup_A X \rel C)=F(B,C\cup_A X \rel A)$. The relevant map
${F(B,X \rel  A)\rightarrow F(B,C\cup_A X \rel A)}$ is $(k-d)$-connected,
since it is induced by a $k$-connected map $X\rightarrow C\cup_A X$.
\endproof

Connectivity questions about the square
$$
\xymatrix{
\cal T(A\rightarrow *)\ar[d]\ar[r] & \cal T(B\rightarrow *)\ar[d]\\
\cal T(C\rightarrow *)\ar[r] & \cal T(D\rightarrow *)
}
$$
of pushout functors require further methods and will be considered later (Theorems \ref{square-pushforwards} and \ref{pushforward-analyticity}).

For completeness we record the duals of \ref{pre-function-space} and \ref{function-space}:

\begin{prop}If $B\rightarrow D$ is a fibration, then the homotopy fiber of the pushforward (forgetful) functor ${w\cal T(A\rightarrow B)\rightarrow w\cal T(A\rightarrow D)}$ with respect to the object $C$ of $\cal T(A\rightarrow D)$ is weakly equivalent to ${w\cal T(A\rightarrow B\times_DC,\sim C)}$.
\end{prop}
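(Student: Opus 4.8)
The plan is to obtain this as the special case of Proposition \ref{hofiber-pushforward} in which the map of source spaces is an identity. Concretely, I would apply \ref{hofiber-pushforward} with $A_0 = A_1 = A$ and the identity map between them, with $B_0 = B$ and $B_1 = D$ and the map $B_0 \to B_1$ equal to the given $B \to D$, and with the distinguished object $X_1$ taken to be $C$. The hypotheses of \ref{hofiber-pushforward} require that $A_0 \to A_1$ be a cofibration and that $B_0 \to B_1$ be a fibration; the first holds because identity maps are cofibrations, and the second is exactly the standing assumption of the present proposition. Under this specialization the pushforward functor $w\cal T(A \to B_0) \to w\cal T(A \to B_1)$ sends $A \to X \to B$ to $A \to X \to B \to D$, since $A_1 \cup_{A_0} X = A \cup_A X$ is canonically identified with $X$; this is precisely the forgetful functor named in the statement.

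Proposition \ref{hofiber-pushforward} then identifies the homotopy fiber over $C$ with the portion of $w\cal T(A \to X_1 \times_{B_1} B_0) = w\cal T(A \to C \times_D B)$ consisting of those factorizations $A \to X_0 \to C \times_D B$ for which the induced map $A_1 \cup_{A_0} X_0 \to X_1$ is a weak equivalence. Tracing through the canonical identification $A \cup_A X_0 \cong X_0$, that induced map is simply the composite of the structure map $X_0 \to C \times_D B$ with the projection $C \times_D B \to C$, so the condition on $X_0$ is exactly membership in $w\cal T(A \to B\times_D C, \sim C)$ in the notation of Example \ref{left-inverses} (where the auxiliary map $B\times_D C \to C$ is this projection). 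This yields the asserted weak equivalence.

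I do not expect a serious obstacle here; the work is all in \ref{hofiber-pushforward} already. The only points that deserve explicit mention in the write-up are bookkeeping ones: that the identity map is a cofibration (so the restricted pushforward functor on $w\cal T$ is defined), that the pushforward along an identity is only \emph{canonically isomorphic} to the identity functor rather than equal to it — which is harmless, since a natural isomorphism of functors induces a weak equivalence on every homotopy fiber — and that the map to $C$ appearing in the side condition of \ref{hofiber-pushforward} really is the projection $C\times_D B \to C$ used in the notation of \ref{left-inverses}. Alternatively one could give a self-contained proof by running the Quillen "Theorem B" argument that underlies \ref{hofiber-pushforward} and \ref{pre-function-space} with the arrows reversed, but invoking \ref{hofiber-pushforward} directly is cleaner and avoids duplicating that argument.
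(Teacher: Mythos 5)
Your proposal is correct and is exactly the argument the paper has in mind: the statement is presented as the dual of Proposition~\ref{pre-function-space}, and just as \ref{pre-function-space} is the specialization of \ref{hofiber-pullback} to the case where $B_0\to B_1$ is an identity, the present statement is the specialization of \ref{hofiber-pushforward} to the case where $A_0\to A_1$ is an identity, with the same renaming bookkeeping. The two points you flag at the end (identities are cofibrations, and $A\cup_A X\cong X$ is a canonical isomorphism rather than an equality) are indeed the only things worth saying out loud.
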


\begin{prop}With the hypothesis of \ref{function-space}, the homotopy fiber with respect to $C$ of the pushforward (forgetful) functor $w\cal T(A\rightarrow B)\rightarrow w\cal T(A\rightarrow D)$ with respect to the object $C$ of $\cal T(A\rightarrow D)$ is weakly equivalent to the space $L$ of lifting/extensions $C\rightarrow B$.
\end{prop}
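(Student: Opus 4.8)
The plan is to dualize the proof of \ref{function-space} step by step. The forgetful functor $w\cal T(A\rightarrow B)\rightarrow w\cal T(A\rightarrow D)$ is a pushforward functor (along the square with $A$ down both sides and $B\rightarrow D$ across the top), so by the preceding proposition, dual to \ref{pre-function-space}, its homotopy fiber over $C$ is equivalent to $w\cal T(A\rightarrow B\times_D C,\sim C)$; by \ref{replacements} we may use instead the equivalent cofibrant version, whose objects are factorizations $A\rightarrow X\rightarrow B\times_D C$ with $A\rightarrow X$ a cofibration and the composite $X\rightarrow C$ a weak equivalence. Since a map to $B\times_D C$ is a compatible pair of maps to $B$ and to $C$, such an object amounts to a cofibration $A\rightarrow X$, a weak equivalence $X\rightarrow C$ under $A$, and a map $X\rightarrow B$ under $A$ and over $D$.

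First I would attach to each cofibrant factorization $A\rightarrow Y\rightarrow C$ with $Y\rightarrow C$ a weak equivalence the simplicial set $\hat L(Y)$ whose $p$-simplices are the $\Delta^p$-families of maps $Y\rightarrow B$ extending $A\rightarrow B$ and lifting $Y\rightarrow C\rightarrow D$ through $B\rightarrow D$; thus $\hat L(C)=L$. This is a contravariant functor of $Y$, and by obstruction theory --- and here is where the hypotheses of \ref{function-space} enter, via $A\rightarrow Y$ a cofibration and $B\rightarrow D$ a fibration --- it sends weak equivalences to weak equivalences. (The cofibration hypothesis on $A\rightarrow C$ also ensures that $C$ itself is one of these cofibrant factorizations, so that it can serve as a terminal object below.)

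Next I would form the one-sided bar construction $\tilde L$, the bisimplicial set whose $(p,q)$ part is the coproduct, over $q$-simplices $Y_0\rightarrow\cdots\rightarrow Y_q$ in the nerve of the category of such cofibrant factorizations, of $\hat L_p(Y_q)$. On one hand the diagonal of $\tilde L$ is the homotopy colimit of the diagram $Y\mapsto\hat L(Y)$: every arrow is a weak equivalence and the indexing category has $C$ as terminal object, hence contractible nerve, so this homotopy colimit is equivalent to $\hat L(C)=L$. On the other hand $\tilde L$ is the nerve of a simplicial category $G_{\bullet}$ whose degree-$p$ objects package a cofibrant factorization $A\rightarrow Y\rightarrow C$ with $Y\rightarrow C$ a weak equivalence together with a $\Delta^p$-family of maps $Y\rightarrow B$ as above; the degeneracy $G_0\rightarrow G_p$ is a weak equivalence by an argument like the one in the proof of \ref{Waldhausen_adaptation}, so $G_0\rightarrow G_{\bullet}$ is a weak equivalence. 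Finally $G_0$ is exactly the cofibrant version of $w\cal T(A\rightarrow B\times_D C,\sim C)$ that we started with, because a factorization $A\rightarrow Y\rightarrow C$ equipped with a single map $Y\rightarrow B$ is the same thing as a factorization $A\rightarrow Y\rightarrow B\times_D C$ with $Y\rightarrow C$ a weak equivalence. Composing the chain of equivalences gives the proposition.

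I expect the one genuinely delicate point --- just as in \ref{function-space} --- to be the twofold reading of $\tilde L$: verifying that its diagonal is simultaneously the homotopy colimit of $\hat L$ and the nerve of $G_{\bullet}$, and that each degeneracy $G_0\rightarrow G_p$ is a weak equivalence. This is the familiar bookkeeping with bar constructions and continuously parametrized families of maps, in which the cofibration hypothesis on $A\rightarrow Y$ is needed to keep the auxiliary spaces $A\cup_{A\times I}(Y\times I)$ used to build the relevant natural transformations from acquiring a pathological weak homotopy type.
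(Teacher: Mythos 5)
Your proof is correct, and it is precisely what the paper has in mind: the paper states this proposition without proof as one of ``the duals of \ref{pre-function-space} and \ref{function-space},'' and your argument is a faithful, careful dualization of the bar-construction proof of \ref{function-space}, with the covariant functor $\hat L$ on fibrant replacements of $B$ (initial object $B$) traded for the contravariant functor $\hat L$ on cofibrant replacements of $C$ (terminal object $C$), and with the indexing spot $\hat L_p(X_0)$ in $\tilde L_{p,q}$ correctly flipped to $\hat L_p(Y_q)$ to accommodate contravariance. (One tiny slip of wording: in your parenthetical description of the pushforward square, the map $B\rightarrow D$ runs down the right-hand side, not across the top; the source of the pushforward is $\mathcal T(A\rightarrow B)$, so the top edge is $A\rightarrow B$ and the left edge is the identity on $A$.)
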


\section{\label{truncation} Homology Truncation}

To motivate the definition of (good) homology truncation below, consider Example \ref{ur-completion}.

By definition, for a square
$$
\xymatrix{
A\ar[r]\ar[d] & X\ar[d]\\
B\ar[r] & D
}
$$
to be a homotopy pushout means that the induced map
\begin{equation} \label{**}
\hocolim(B\leftarrow A\rightarrow X)\rightarrow D
\end{equation}
is a weak equivalence. This is the same as saying that the map is both $2$-connected and a homology equivalence. We are calling a map $Y\rightarrow D$ a homology equivalence if it induces homology isomorphisms for all coefficient systems on $D$. Assume for simplicity that $D$ is based and path-connected with $\pi_1(D)=\Gamma$, so that coefficient systems correspond to left $\mathbb Z\Gamma$-modules. To test for homology equivalence it suffices to consider homology with coefficients in the free module $\mathbb Z\Gamma$.

In this discussion the symbol $C_*(D)$ will mean the singular chain complex of $D$ with coefficients in the local system corresponding to the module $\Bbb Z\Gamma$. This is a complex of (free, right) $\Bbb Z\Gamma$-modules, since the coefficient module in this case is in fact a bimodule. (This same complex of free modules can alternatively be described as the integral chains on the universal covering space of $D$, if $D$ is the sort of space that has a universal covering space.) More generally for any space $Y$ over $D$ the symbol $C_*(Y)$ will mean the singular chain complex of $Y$ with coefficients in that local system pulled back to $Y$. If $Y\rightarrow Z$ is a map of spaces over $D$ then $C_*(Y\rightarrow Z)$ will mean the algebraic mapping cone of the chain map $C_*(Y)\rightarrow C_*(Z)$ and $H_*(Y\rightarrow Z)$ will mean its homology. To repeat, all of this is with coefficients in $\Bbb Z\Gamma$.

Returning to the map \eqref{**}, the homology condition means precisely that the induced homomorphism
$$
H_i(A\rightarrow X)\rightarrow H_i(B\rightarrow D)
$$
is an isomorphism for every $i$. If we assume that the map $A\rightarrow B$ is $2$-connected, then the canonical map
$$
X\rightarrow \hocolim(B\leftarrow A\rightarrow X)
$$
is also  $2$-connected, and therefore the $2$-connectedness of
the map \eqref{**} is equivalent to $2$-connectedness of the map $X\rightarrow D$. Thus in this case the factorizations $A\rightarrow X\rightarrow D$ which yield homotopy pushout squares are precisely those which are good homology truncations (of $D$ relative to $A$ with respect to $C_*(A\rightarrow D)\rightarrow C_*(B\rightarrow D)$) in the sense of the following definition.

\begin{defn}Let $A\rightarrow D$ be a map of spaces with $D$ path-connected and $\pi_1(D)=\Gamma$. Let $K_*$ be a nonnegatively graded chain complex of projective right $\Bbb Z\Gamma$-modules and let $\alpha\: C_*(A\rightarrow D)\rightarrow K_*$ be a $\Bbb Z\Gamma$-linear chain map. A factorization $A\rightarrow X\rightarrow D$ is a \it homology truncation\rm\  of $D$ relative to $A$ with respect to $\alpha$ if the composition
$$
C_*(A\rightarrow X)\rightarrow C_*(A\rightarrow D)\rightarrow K_*
$$
is a chain equivalence. If in addition the map $X\rightarrow D$ is $2$-connected, then it is a {\it good homology truncation.} Define
$$
\Trunc(A\rightarrow D, \alpha)
$$
to be the portion of $w\cal T(A\rightarrow D)$
whose objects are the good homology truncations with respect to $\alpha$.
\end{defn}

\begin{rem}If the given chain map $\alpha$ is $n$-connected, then in any homology truncation the map $X\rightarrow D$ is necessarily \it homologically\rm\ $(n-1)$-connected, since $H_i(X\rightarrow D)\cong H_i(C_*(A\rightarrow X)\rightarrow C_*(A\rightarrow D))\cong H_{i+1}(\alpha)$. In cases of interest to us, $\alpha$ will be $n$-connected for some $n\geq 3$. In such cases the map $X\rightarrow D$ in a good homology truncation will automatically be $(n-1)$-connected, being both $2$-connected and homologically $(n-1)$-connected.
\end{rem}

\begin{rem}Call the chain complex $K_*$ of projective modules cohomologically $d$-dimensional if for every module $M$ the complex \newline $Hom_{\mathbb Z\Gamma}(K_*,M)$ has trivial cohomology in dimensions greater than $d$. (This is the case for example, if $K_*$ is actually $d$-dimensional.) In this case the map $A\rightarrow X$ in a good homology truncation must be cohomologically $d$-dimensional, and therefore (by \ref{coh-htpy}) homotopically $d$-dimensional if $d\geq 2$.
\end{rem}

\begin{thm}Suppose that $n\geq 3$, the chain map $\alpha$ is $n$-connected, $d\geq 2$, and $K_*$ is cohomologically $d$-dimensional. Then $
{\Trunc(A\rightarrow D,\alpha)}$ is $(n-d-1)$-connected.
\end{thm}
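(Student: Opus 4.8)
The plan is to prove this by obstruction theory, realizing the objects of $\Trunc(A\to D,\alpha)$ by attaching cells; the point will be that such an object is simultaneously $d$-dimensional over $A$ and $(n-1)$-connected over $D$, a gap of size $n-d-1$. For reductions: by \ref{replacements} we may take $A\to D$ to be a cofibration with a fixed relative CW structure, so that $C_*(A\to D)$ is the relative cellular complex over $\mathbb Z\Gamma$, $\Gamma=\pi_1D$; and since $K_*$ is a complex of projectives that is cohomologically $d$-dimensional, we may replace it and $\alpha$ by a chain-equivalent complex of \emph{free} modules in degrees $\le d$, which changes neither the hypotheses nor $\Trunc(A\to D,\alpha)$. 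Recall from the two Remarks above that in every good homology truncation the map $X\to D$ is automatically $(n-1)$-connected and $A\to X$ automatically homotopically $d$-dimensional, so the only content in ``good homology truncation'' beyond the algebraic condition $C_*(A\to X)\xrightarrow{\sim}K_*$ is the $2$-connectivity of $X\to D$.

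Using the ``Option Three'' reading of $k$-connectedness, it then suffices, for $0\le m\le n-d$, to start from a small subcategory $\cal G\subset\Trunc(A\to D,\alpha)$ and a map of $S^{m-1}$ into the realization of its nerve, and to fill in a disk over some larger small subcategory; one does this by building, over a triangulation of $(D^m,S^{m-1})$, a compatible family of good homology truncations extending the given one. This is a cell-by-cell construction whose whole content is a single step: given a good homology truncation $X$ and a prescribed modification --- a module's worth of cells attached in some dimension $i\le d$, with prescribed boundaries and reference map to $D$ --- the modification can be carried out inside $\Trunc$ exactly when an obstruction vanishes; that obstruction is a class in the (co)homology of a complex built from $C_*(A\to X)$, $C_*(X\to D)$ and $K_*$, and it vanishes in the range $m\le n-d$ because $X\to D$ is $(n-1)$-connected and the cells have dimension $\le d$. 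It is cleanest to package this as a map from $\Trunc(A\to D,\alpha)$ to an algebraic moduli space --- of $\le d$-dimensional free complexes $M_*$ with a chain map $M_*\to C_*(A\to D)$ whose composite with $\alpha$ is an equivalence --- noting that this moduli space is $(n-d)$-connected by a direct computation (the complex $\mathrm{Hom}_{\mathbb Z\Gamma}(M_*,\mathrm{cone}\,\alpha)$ lives above degree $n-d$, since $M_*$ has dimension $\le d$ and $\mathrm{cone}\,\alpha$ is $n$-connected), while its homotopy fibers, the spaces of topological realizations of fixed chain-level data, are $(n-d-1)$-connected by the obstruction theory above; the associated long exact sequence of homotopy groups then gives the result.

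The hard part is keeping all three conditions in force simultaneously during the cell-trades: that $C_*(A\to X)\to K_*$ stay a chain equivalence, that $X\to D$ stay $2$-connected, and that everything stay inside small subcategories. The first two pull against one another --- the trades that fill disks in the moduli of \emph{chain-level} data must leave $\pi_1X\cong\pi_1D$ undisturbed, while conversely the low-dimensional cells forced on us to repair $\pi_0$ and $\pi_1$ must be absorbed into the equivalence with $K_*$ --- and this is exactly where the hypotheses are used: $d\ge2$ is what makes $K_*$ realizable by a CW-type object at all, and $n\ge3$ (so $n-1\ge2$) is what lets relative Hurewicz upgrade the automatic homological $(n-1)$-connectedness of $X\to D$ to the homotopical statement once $\pi_1$ matches. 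The remaining work --- checking that the obstruction computation is uniform across a whole parametrized family, not just valid for one truncation at a time, plus the routine care needed to stay small-category-valued --- is dispatched with the standard tools (relative Hurewicz, the higher Blakers--Massey theorems of Appendix B) once this framework is in place.
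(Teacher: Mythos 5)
Your approach is genuinely different from the paper's, and it is substantially less developed. The paper proves the theorem in three layers, each with its own argument: (i) existence of a good truncation when $n\geq d$, by an explicit cell-attaching construction in which the final obstruction module $H_n(\bar K_*)$ is first shown to be \emph{projective} (via an $\ext$ computation) and must then be stabilized to a \emph{free} module by adding trivially-attached $(n-1)$-cells before it can be killed by attaching $n$-cells; (ii) uniqueness up to component when $n\geq d+1$, by obstruction theory producing a direct morphism between any two cofibrant-fibrant truncations; and (iii) higher connectivity when $n>d+1$, where the crucial tool is Lemma \ref{Waldhausen_adaptation}, which identifies any component of the nerve with a delooping of the simplicial monoid $\Aut^h_\bullet(X)$ --- so that the entire question about higher homotopy groups of the nerve collapses to a single obstruction-theory calculation of the connectivity of the self-map space of one object. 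You bypass \ref{Waldhausen_adaptation} entirely and instead propose to (a) fill spheres in the nerve by constructing parametrized families of truncations cell-by-cell, and (b) set up a comparison map to an algebraic moduli space of chain complexes and run a long exact sequence. This is a legitimate alternative strategy in spirit, but neither piece is actually carried out.

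Two concrete gaps. First, and most seriously, you never resolve the conceptual issue that \ref{Waldhausen_adaptation} exists to resolve: a $p$-simplex in $Nw\cal T(j)$ is a chain of weak equivalences, not a ``parametrized family of good truncations over a simplex,'' so ``building compatible families over a triangulation of $(D^m,S^{m-1})$'' is not a way to produce a map $D^m\to |N\Trunc|$ without additional machinery (something like the $w_\bullet$-construction and Waldhausen's or Dwyer--Kan's identification of the nerve with the classifying space of the automorphism monoid). You acknowledge this is ``the hard part'' but then assert it is ``dispatched with standard tools once this framework is in place''; that framework \emph{is} the missing content. Second, in the existence case $n=d$ your cell-by-cell sketch ignores the only nontrivial point: after truncating at level $n-1$ the remaining obstruction to producing a truncation is a projective module that need not be free, and the stabilization step (trivially attaching enough $(n-1)$-cells to make it free, then killing it by $n$-cells) is essential --- your plan as written would stall exactly there. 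The claimed connectivity of the algebraic moduli space and the identification of the fibers are also both asserted rather than proved, but the two issues above are the ones that, as stated, block the argument.
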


\proof This says first of all that  a good homology truncation exists if $n\geq d$. That was proved in \cite{Klein}.
We recall the argument. The map $A\rightarrow D$, like any map of spaces, can be factored as $A\rightarrow Y\rightarrow D$ with $A\rightarrow Y$ a relative cell complex of dimension less than or equal to $n-1$ and $Y\rightarrow D$ an $(n-1)$-connected map. The composed chain map
$$
C_*(A\rightarrow Y)\rightarrow C_*(A\rightarrow D)\rightarrow K_*
$$
is $(n-1)$-connected, because it is the composition of $(n-1)$-connected maps, so its algebraic mapping cone $\bar K_*$ is an $(n-1)$-connected complex of projective $\Bbb Z\Gamma$-modules. Furthermore, $\bar K_*$ is $n$-dimensional in the sense that the cohomology $H^i(\bar K_*;\Lambda)$ vanishes when $i>n$ for any $\Bbb Z\Gamma$-module $\Lambda$, because $K_*$ and $C_*(A\rightarrow Y)$ are respectively $d$-dimensional and $(n-1)$-dimensional in this sense and $d\leq n$. It follows (by considering the case when $\Lambda$ is an injective module, so that $H^i(\bar K_*;\Lambda)=\hom_{\Bbb Z\Gamma}(H_i(\bar K_*),\Lambda)$) that $H_i(\bar K_*)=0$ when $i>n$. The unique nontrivial homology module $H_n(\bar K_*)$ must then be projective since we now have $\ext_{\Bbb Z\Gamma}^{i-n}(H_n(\bar K_*),\Lambda)=H^i(\bar K_*;\Lambda)=0$ for all $i>n$ and all $\Lambda$. This projective module can be arranged to be free by adding a sufficiently large free module to it, and that can be achiev!
 ed geometrically by attaching sufficiently many $(n-1)$-cells to $Y$ (trivially, with trivial map to $D$). At this point the pair $(Y,A)$ is still $(n-1)$-dimensional and the map $Y\rightarrow D$ is still $(n-1)$-connected. The free module can now be eliminated by attaching $n$-cells, as follows: We have surjective maps
$$
\pi_n(Y\rightarrow D)\rightarrow H_n(Y\rightarrow D)\rightarrow H_n(\bar K_*)
$$
Choose a basis for $H_n(\bar K_*)$. Lift each basis element to $\pi_n(Y\rightarrow D)$, and attach cells to $Y$ (with suitable map to $D$) accordingly.

Next, it says that if $n\geq d+1$ then any two good truncations $(X,i,p)$ and $(X,i',p')$ are in the same component of $w\cal T(A\rightarrow D)$. We may assume that the objects are cofibrant and fibrant. Since $i$ is a homotopically $d$-dimensional cofibration and $p'$ is a $d$-connected (because $(n-1)$-connected) fibration, there is a map $(X,i,p)\rightarrow (X,i',p')$ by obstruction theory. Any such map must in fact be a weak equivalence (thus a morphism of $w\cal T(A\rightarrow D)$), because it is a homology equivalence and a $\pi_1$-isomorphism.

Finally, if $n> d+1$ then it says that the loopspace of any component of $\Trunc(A\rightarrow D,\alpha)$ is $(n-d-1)$-connected. In view of
\ref{Waldhausen_adaptation},
this means that for a fibrant and cofibrant object $(X,i,p)$ the space of self-maps is $(n-d-2)$-connected. This is true, again by obstruction theory.
\endproof

The truncation theorem applied to Example \ref{ur-completion} (with $n=k+1$) yields:

\begin{cor} \label{trunc_cor}
If the maps $A\rightarrow B$ and $B\rightarrow D$ are respectively $k$-connected and homotopically $d$-dimensional and if $k\geq 2$ and $d\geq 2$, then the space of all homotopy pushout squares
$$
\xymatrix{
A\ar[r]\ar[d] & X\ar[d]\\
B\ar[r] & D
}
$$
is $(k-d)$-connected. In particular at least one such square exists if $k\geq d-1$.
\end{cor}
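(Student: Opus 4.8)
The plan is to read the result off from the truncation theorem by identifying the space in question with a suitable $\Trunc(A\rightarrow D,\alpha)$. Working inside the conventions of this section I may assume $D$ is path-connected (the components of $D$ can be handled one at a time), set $\Gamma=\pi_1(D)$, and use $\Bbb Z\Gamma$-coefficients throughout. I take $K_*=C_*(B\rightarrow D)$, the algebraic mapping cone of $C_*(B)\rightarrow C_*(D)$, which is a nonnegatively graded complex of free (hence projective) right $\Bbb Z\Gamma$-modules, and I let $\alpha\colon C_*(A\rightarrow D)\rightarrow K_*$ be the chain map induced by the map $A\rightarrow B$ of spaces over $D$. Since $k\geq 2$, the map $A\rightarrow B$ is $2$-connected, so the discussion preceding the definition of homology truncation applies verbatim: the factorizations $A\rightarrow X\rightarrow D$ for which the square in Example~\ref{ur-completion} is a homotopy pushout are exactly the good homology truncations of $D$ relative to $A$ with respect to $\alpha$. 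Hence the space of homotopy pushout squares under consideration is precisely $\Trunc(A\rightarrow D,\alpha)$.

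It then remains to verify the two numerical hypotheses of the truncation theorem. For the dimension of $K_*$: a homotopically $d$-dimensional map $B\rightarrow D$ is, up to weak equivalence, a retract of a relative CW complex of dimension $\leq d$, so $C_*(B\rightarrow D)$ is a retract of a complex of free $\Bbb Z\Gamma$-modules concentrated in degrees $\leq d$ and is therefore cohomologically $d$-dimensional; also $d\geq 2$ by hypothesis. For the connectivity of $\alpha$: by the octahedral axiom (equivalently, the long exact sequence of the triple $(D;B,A)$), the mapping cone of $\alpha$ is quasi-isomorphic to $C_*(A\rightarrow B)$ shifted by one, so $H_j(\alpha)\cong H_{j-1}(A\rightarrow B)$; as $A\rightarrow B$ is $k$-connected this vanishes for $j\leq k+1$, i.e.\ $\alpha$ is $(k+1)$-connected. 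Putting $n=k+1$, we have $n\geq 3$, and the truncation theorem gives that $\Trunc(A\rightarrow D,\alpha)$ is $(n-d-1)=(k-d)$-connected. The final clause follows because a $(k-d)$-connected space is nonempty as soon as $k-d\geq -1$.

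The only step that takes real care is the first one: making sure that the space of homotopy pushout squares is honestly the full portion $\Trunc(A\rightarrow D,\alpha)$, on the nose and with the correct $\alpha$, and that the $2$-connectivity of $A\rightarrow B$ used in that identification is genuinely available. Everything after that is bookkeeping with the already-proved truncation theorem; the mapping-cone computation showing $\alpha$ is $(k+1)$-connected is standard homological algebra but worth recording so that the choice $n=k+1$, and hence the passage from $(n-d-1)$ to $(k-d)$, is transparent.
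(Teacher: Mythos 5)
Your argument is exactly the paper's: the paper proves the corollary by the one line ``apply the truncation theorem to Example~\ref{ur-completion} with $n=k+1$,'' and the discussion preceding the definition of homology truncation in section~3 already carries out the identification of homotopy pushout squares with good truncations relative to $\alpha\colon C_*(A\rightarrow D)\rightarrow C_*(B\rightarrow D)$, using the $2$-connectivity of $A\rightarrow B$, just as you do. Your spelled-out verification of the connectivity of $\alpha$ and the cohomological dimension of $K_*$ is correct and simply makes explicit what the paper leaves to the reader.
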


\begin{rem}The existence when $k\geq d-1$ is valid without the hypothesis $k\geq 2$, but the uniqueness when $k\geq d$ fails if $k=1$.
\end{rem}

Next let us work out what truncation has to say about Example
\ref{cocart-replace}. A generalization of the following discussion from squares to high-dimensional cubes will play a key role in section 6.
Suppose that
$$
\xymatrix{
A\ar[r]\ar[d] & B_1\ar[d]\\
B_2 \ar[r] &B_{12}
}
$$
is a square of spaces such that the maps $B_i\rightarrow B_{12}$ are fibrations. Suppose that integers $k_1\geq 2$ and $k_2\geq 2$ are given, such that the map $B_i\rightarrow B_{12}$ is $k_i$-connected. Assume that $B_{12}$ is path-connected. Because $k_i\geq 2$, the spaces $B_1$, $B_2$, and $B_1\times_{B_{12}}B_2$ are all path-connected and have the same fundamental group as $B_{12}$, say $\Gamma$.

As above, $C_*(-)$ for spaces over $B_{12}$ will denote a complex of projective right $\mathbb Z[\Gamma]$-modules, the complex of singular chains with coefficients in $\mathbb Z[\Gamma]$. Let $K_*$ be the algebraic mapping cone of
$$
C_*(A)\rightarrow \holim (C_*(B_1)\rightarrow C_*(B_{12})\leftarrow C_*(B_2)),
$$
Here the homotopy limit of a diagram $P\rightarrow Q\leftarrow R$ of chain complexes may be defined as having $P_m \times Q_{m-1}\times R_m$ as its $m$-th chain module, with boundary given by
$$
\partial(p,q,r)= (\partial p, f(p)-\partial q +g(r),\partial r).
$$
There is a canonical map
$$
\alpha\:  C_*(A\rightarrow B_1\times_{B_{12}}B_2)\rightarrow K_*,
$$
and we may consider the problem of truncation of $A\rightarrow B_1\times_{B_{12}}B_2$ relative to $\alpha$.

A factorization  $A\rightarrow X\rightarrow B_1\times_{B_{12}}B_2$ is a homology truncation with respect to $\alpha$ if and only if the corresponding square
$$
\xymatrix{
X \ar[r]\ar[d] & B_1\ar[d] \\
B_2\ar[r] &  B_{12}
}
$$
is a homology pushout. If it is a \it good\rm\ homology truncation (i.e. if in addition the square is $2$-cartesian), then the square is a homotopy pushout. Indeed, homology pushout implies homotopy pushout if the square is $2$-cocartesian, and $2$-cocartesian follows by the dual Blakers-Massey theorem \ref{dual} from $1$-cartesian combined with $k_1+k_2\geq 1$.

Thus the good homology truncations are precisely those factorizations such that the corresponding square is both a homotopy pushout and $2$-cartesian. They are also those such that the corresponding square is a homotopy pushout and $\pi_1(X)\rightarrow \Gamma$ is an isomorphism.

To apply the truncation theorem, we need a connectivity for the chain map $\alpha$ and we need a cohomological dimension for the chain complex $K_*$.

The square
$$
\xymatrix{
B_1\times_{B_{12}}B_2  \ar[r]\ar[d] & B_1\ar[d] \\
B_2\ar[r] &  B_{12}
}
$$
is $(k_1+k_2+1)$-cocartesian, by Theorem \ref{dual} again, and therefore homologically $(k_1+k_2+1)$-cocartesian, so that the map $\alpha$ is $(k_1+k_2)$-connected.

The chain complex $K_*$ is the desuspension of the algebraic mapping cone of
$$
\hocolim (C_*(B_1)\leftarrow C_*(A)\rightarrow C_*(B_2))\rightarrow C_*(B_{12}),
$$
so it will be $d$-dimensional if the latter is $(d+1)$-dimensional.

We conclude:

\begin{lem} \label{cocart-replace-lemma} Let
$$
\xymatrix{
A \ar[r]\ar[d] & B_1\ar[d] \\
B_2\ar[r] &  B_{12}
}
$$
be a square diagram of spaces in which the maps $B_i\rightarrow B_{12}$ are $k_i$-connected fibrations with $k_i\geq 2$. Assume that the canonical map
$$
\hocolim (B_1\leftarrow A \rightarrow B_2)\rightarrow B_{12}
$$
is homotopically $(d+1)$-dimensional with $d\geq 2$. Then the portion of $w\cal T(A\rightarrow B_1\times_{B_{12}}B_2)$ corresponding to squares
$$
\xymatrix{
X \ar[r]\ar[d] & B_1\ar[d] \\
B_2\ar[r] &  B_{12}
}
$$
that are both homotopy pushouts and $2$-cartesian is $(k_1+k_2-d-1)$-connected.
\end{lem}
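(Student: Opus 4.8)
\medskip\noindent\emph{Proof proposal.} The plan is to recognize the subcategory named in the statement as a truncation category $\Trunc(A\rightarrow B_1\times_{B_{12}}B_2,\alpha)$ for the chain map $\alpha$ constructed just above the statement, and then to quote the truncation theorem. All homology in sight is taken with coefficients in $\mathbb Z\Gamma$, $\Gamma=\pi_1(B_{12})$; the hypothesis $k_i\geq 2$ guarantees that $B_1$, $B_2$ and $B_1\times_{B_{12}}B_2$ are path-connected with fundamental group $\Gamma$, so this makes sense, and $K_*$ is the complex of projective $\mathbb Z\Gamma$-modules and $\alpha\colon C_*(A\rightarrow B_1\times_{B_{12}}B_2)\rightarrow K_*$ the comparison map defined above.

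First I would check that the portion of $w\cal T(A\rightarrow B_1\times_{B_{12}}B_2)$ described in the statement is literally $\Trunc(A\rightarrow B_1\times_{B_{12}}B_2,\alpha)$. By definition a factorization $A\rightarrow X\rightarrow B_1\times_{B_{12}}B_2$ is a homology truncation with respect to $\alpha$ exactly when the associated square on $X$ is a homology pushout, and it is a \emph{good} one exactly when in addition $X\rightarrow B_1\times_{B_{12}}B_2$ is $2$-connected. Since the square with $B_1\times_{B_{12}}B_2$ in the initial corner is $\infty$-cartesian (it is a pullback of fibrations), $2$-connectedness of $X\rightarrow B_1\times_{B_{12}}B_2$ is the same as $2$-cartesianness of the square on $X$; that square is then in particular $1$-cartesian, and as $k_1+k_2\geq 1$ the dual Blakers--Massey theorem \ref{dual} makes it $2$-cocartesian, so that ``homology pushout'' upgrades to ``homotopy pushout.'' Conversely a homotopy pushout that is $2$-cartesian is a good homology truncation. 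Hence the two subcategories coincide.

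Next I would pin down the two numerical inputs required by the truncation theorem. For the connectivity of $\alpha$: the square $B_1\times_{B_{12}}B_2\rightarrow B_1$, $\rightarrow B_2$, $\rightarrow B_{12}$ is $(k_1+k_2+1)$-cocartesian by \ref{dual}, hence homologically so for all $\mathbb Z\Gamma$-coefficients, and since $\mathrm{cone}(\alpha)$ is the desuspension of the cone of $\hocolim(C_*(B_1)\leftarrow C_*(B_1\times_{B_{12}}B_2)\rightarrow C_*(B_2))\rightarrow C_*(B_{12})$, this forces $\alpha$ to be $(k_1+k_2)$-connected. For the cohomological dimension of $K_*$: $K_*$ is the desuspension of the algebraic mapping cone of $\hocolim(C_*(B_1)\leftarrow C_*(A)\rightarrow C_*(B_2))\rightarrow C_*(B_{12})$, and the hypothesis that the corresponding map of spaces is homotopically $(d+1)$-dimensional makes that chain map cohomologically $(d+1)$-dimensional, hence its cone cohomologically $(d+1)$-dimensional, hence $K_*$ cohomologically $d$-dimensional. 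Setting $n=k_1+k_2$ (so $n\geq 4\geq 3$, $\alpha$ is $n$-connected, $d\geq 2$, and $K_*$ is cohomologically $d$-dimensional), the truncation theorem gives that $\Trunc(A\rightarrow B_1\times_{B_{12}}B_2,\alpha)$ is $(n-d-1)=(k_1+k_2-d-1)$-connected, which by the first step is exactly the assertion.

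Given how much of this is already laid out in the paragraphs preceding the statement, I do not expect a serious obstacle: the real work is the bookkeeping in the first step, translating back and forth between homology pushouts and homotopy pushouts within the $\pi_1$- and connectivity-constraints in force, together with the two Blakers--Massey computations. If anything is delicate it is keeping the suspension/desuspension shift honest, since it is precisely that shift by one which turns a homotopically $(d+1)$-dimensional $\hocolim$ map into a cohomologically $d$-dimensional $K_*$ and hence produces $(k_1+k_2-d-1)$- rather than $(k_1+k_2-d)$-connectivity.
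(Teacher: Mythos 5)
Your proposal is correct and follows essentially the same route as the paper: the discussion immediately preceding the lemma in the text already identifies the subcategory with the truncation category $\Trunc(A\rightarrow B_1\times_{B_{12}}B_2,\alpha)$, computes the connectivity $(k_1+k_2)$ of $\alpha$ from the $(k_1+k_2+1)$-cocartesianness of the square with $B_1\times_{B_{12}}B_2$ at the initial corner (via \ref{dual}), and reads off the cohomological $d$-dimensionality of $K_*$ from the homotopical $(d+1)$-dimensionality of the $\hocolim$ map; the lemma is then quoted as the conclusion of the truncation theorem. You have only filled in a few details that the paper treats tersely, such as identifying $\mathrm{cone}(\alpha)$ as the desuspension of the cone of the hocolim comparison and spelling out why $2$-connectedness of $X\rightarrow B_1\times_{B_{12}}B_2$ is equivalent to $2$-cartesianness of the corresponding square.
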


\section{\label{analyticity} Preview of the Analyticity Proof}

We now combine the analysis of Example \ref{ur-completion} in the previous section with the description (Prop. \ \ref{hofiber-pushforward}) of the homotopy fiber of a pushforward functor. Recall that $w\cal T(A\rightarrow B,d)$ denotes the portion of $w\cal T(A\rightarrow B)$ consisting of objects $X=(A\rightarrow X\rightarrow B)$ such that the map $A\rightarrow X$ is homotopically $d$-dimensional. If $A_0\rightarrow A_1$ is a cofibration, so that there is a pushforward functor
$$
w\cal T(A_0\rightarrow *)\rightarrow w\cal T(A_1\rightarrow *),
$$
then this functor carries $w\cal T(A_0\rightarrow *,d)$ into $w\cal T(A_1\rightarrow *,d)$.

\begin{prop} \label{pushforward-we}
If the cofibration $A_0\rightarrow A_1$ is $2$-connected and $d\geq 2$, then $w\cal T(A_0\rightarrow *,d)$ is precisely the preimage of $w\cal T(A_1\rightarrow *,d)$. That is, a map $A_0\rightarrow X$ must be homotopically $d$-dimensional if the induced map $A_1\rightarrow A_1\cup_{A_0}X$ is homotopically $d$-dimensional.
\end{prop}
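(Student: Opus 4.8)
The plan is to prove the substantive half of the asserted equality: that whenever the pushforward $A_1\to A_1\cup_{A_0}X$ is homotopically $d$-dimensional, so is the original map $A_0\to X$. (The reverse inclusion is the fact recorded just above, and \lq preimage of a portion is a portion\rq\ is automatic, since the preimage of a union of components under any functor is a union of components.) First I would dispose of cofibrancy: factoring $A_0\to X$ as $A_0\to X'\to X$ with $A_0\to X'$ a cofibration and $X'\to X$ a weak equivalence changes neither hypothesis nor conclusion, because $A_0\to A_1$ is also a cofibration, so the induced map $A_1\cup_{A_0}X'\to A_1\cup_{A_0}X$ is a weak equivalence (both sides compute the homotopy pushout). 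Assuming from now on that $A_0\to X$ is a cofibration, the square
$$
\xymatrix{
A_0\ar[r]\ar[d] & X\ar[d]\\
A_1\ar[r] & A_1\cup_{A_0}X
}
$$
is a homotopy pushout.

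The heart of the argument is a passage through cohomological dimension. Since $A_0\to A_1$ is $2$-connected it induces isomorphisms on $\pi_0$ and $\pi_1$, so by van Kampen the map $\pi_1(X)\to\pi_1(A_1\cup_{A_0}X)$ is an isomorphism; call the common group $\Gamma$. Hence local coefficient systems on $X$ are precisely $\Bbb Z\Gamma$-modules, and cohomological $d$-dimensionality of $A_0\to X$ means that $Hom_{\Bbb Z\Gamma}(C_*(A_0\to X),M)$ has trivial cohomology above degree $d$ for every $\Bbb Z\Gamma$-module $M$, with $C_*$ the $\Bbb Z\Gamma$-chain complexes of section \ref{truncation}. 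Now singular chains with $\Bbb Z\Gamma$-coefficients carry the homotopy pushout square above to a homotopy pushout of chain complexes, so the two vertical cofibers coincide: excision provides a chain equivalence $C_*(A_0\to X)\simeq C_*(A_1\to A_1\cup_{A_0}X)$ of bounded-below complexes of projective $\Bbb Z\Gamma$-modules. By hypothesis $A_1\to A_1\cup_{A_0}X$ is homotopically $d$-dimensional, hence (the easy direction, cf.\ Appendix A) cohomologically $d$-dimensional, so $Hom_{\Bbb Z\Gamma}(C_*(A_1\to A_1\cup_{A_0}X),M)$ has trivial cohomology above degree $d$ for all $M$; by the chain equivalence the same holds for $C_*(A_0\to X)$. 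Thus $A_0\to X$ is cohomologically $d$-dimensional, and since $d\ge 2$, \ref{coh-htpy} promotes this to homotopical $d$-dimensionality, which is what we want.

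I do not expect a real obstacle; the single point that deserves care, and the reason the hypothesis reads as it does, is that the $2$-connectedness of $A_0\to A_1$ is exactly what forces $\pi_1(X)\cong\pi_1(A_1\cup_{A_0}X)$ --- needed both so that cohomological $d$-dimensionality of $A_0\to X$ can be tested over the single ring $\Bbb Z\Gamma$ and so that the excision identification of relative chain complexes matches up the correct families of local systems. Granting that, the rest --- cofibrant replacement, excision with local coefficients, the easy implication from homotopical to cohomological $d$-dimensionality, and the appeal to \ref{coh-htpy} --- is routine.
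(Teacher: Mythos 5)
Your argument is correct and follows essentially the same route as the paper's proof: reduce to cohomological $d$-dimensionality via \ref{coh-htpy}, observe that $2$-connectedness of $A_0\to A_1$ makes the fundamental groups (hence coefficient systems) of $X$ and $A_1\cup_{A_0}X$ agree, and then use excision to identify the relevant relative cohomology groups. The paper states the coefficient-system matching in terms of $2$-connectedness of $X\to A_1\cup_{A_0}X$ rather than via van Kampen, and does not unwind the $\Bbb Z\Gamma$-chain-level excision, but these are only differences of exposition, not of substance.
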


\proof Since $d\geq 2$, it will be enough (by Prop.\ \ref{coh-htpy}) if $A_0\rightarrow X$ is cohomologically $d$-dimensional.
The map $X\rightarrow A_1\cup_{A_0}X$ is $2$-connected, so every coefficient system on $X$ is pulled back from a system on $A_1\cup_{A_0}X$. Therefore for any coefficient system the group $H^i(A_0\rightarrow X)$ can be identified with a group $H^i(A_1\rightarrow A_1\cup_{A_0}X)$. By assumption the latter vanishes for $i>d$.
\endproof

\begin{prop} \label{pushforward-we-connectivity} If $A_0\rightarrow A_1$ is a $k$-connected cofibration with $k\geq 2$ then for any $d\geq 2$ the functor
$$
w\cal T(A_0\rightarrow *,d)\rightarrow w\cal T(A_1\rightarrow *,d)
$$
is $(k-d+1)$-connected.
\end{prop}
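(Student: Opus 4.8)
The plan is to compute the homotopy fiber of the functor over each object of its target and then invoke the truncation results of Section \ref{truncation}. Since the map $* \to *$ is trivially a fibration and $A_0 \to A_1$ is a cofibration, Proposition \ref{hofiber-pushforward} applies to the pushforward functor $w\cal T(A_0 \to *) \to w\cal T(A_1 \to *)$ (taking $B_0 = B_1 = *$, so that $X_1 \times_{B_1} B_0 = X_1$): its homotopy fiber over an object $X_1$ of $\cal T(A_1 \to *)$ is weakly equivalent to the portion of $w\cal T(A_0 \to X_1)$ consisting of those $X_0$ for which $A_1 \cup_{A_0} X_0 \to X_1$ is a weak equivalence. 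Because $A_0 \to A_1$ is a cofibration, this last condition says exactly that the commutative square with corners $A_0, X_0, A_1, X_1$ is a homotopy pushout, so the homotopy fiber is precisely the space of completions considered in Example \ref{ur-completion}.

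Next I would pass to the $d$-dimensional portions. By Proposition \ref{pushforward-we}, when $d \geq 2$ and $A_0 \to A_1$ is $2$-connected the subcategory $w\cal T(A_0 \to *, d)$ is exactly the preimage of $w\cal T(A_1 \to *, d)$ under the pushforward functor. Since the homotopy fiber of a functor over a point depends only on the component of that point in the target together with the components of the source mapping into it, the homotopy fiber of the restricted functor $w\cal T(A_0 \to *, d) \to w\cal T(A_1 \to *, d)$ over an object $X_1$ of the target agrees with the homotopy fiber of the unrestricted functor over $X_1$, hence with the space of Example \ref{ur-completion} identified above. Now I apply Corollary \ref{trunc_cor} with $(A,B,D) = (A_0, A_1, X_1)$: its hypotheses are met, since $A_0 \to A_1$ is $k$-connected with $k \geq 2$ and $A_1 \to X_1$ is homotopically $d$-dimensional with $d \geq 2$ (this being the defining condition for $X_1$ to lie in $w\cal T(A_1 \to *, d)$). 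The corollary gives that this homotopy fiber is $(k-d)$-connected.

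Finally I would convert the statement about fibers into a statement about the functor: a map all of whose homotopy fibers are $(k-d)$-connected is $(k-d+1)$-connected provided it is $\pi_0$-surjective. If $k - d + 1 < 0$ the assertion is vacuous; if $k - d + 1 \geq 0$, i.e. $k \geq d-1$, then the final clause of Corollary \ref{trunc_cor} guarantees that each such fiber is nonempty, so the functor is $\pi_0$-surjective. I do not expect a genuinely hard step here — the work is entirely in assembling \ref{hofiber-pushforward}, \ref{pushforward-we} and \ref{trunc_cor}. The one point that needs care, and the only place where the hypothesis $d \geq 2$ and the $2$-connectedness of $A_0 \to A_1$ are really used, is the reduction in the middle paragraph: that restricting to the homotopically $d$-dimensional portions does not change the relevant homotopy fibers, which rests precisely on the ``preimage'' assertion of Proposition \ref{pushforward-we}.
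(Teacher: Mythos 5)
Your proof is correct and follows essentially the same route as the paper's: identify the homotopy fiber via Proposition \ref{hofiber-pushforward}, use Proposition \ref{pushforward-we} to see that restricting to the $d$-dimensional portions does not change those fibers, and then invoke Corollary \ref{trunc_cor}. The closing paragraph about $\pi_0$-surjectivity is superfluous given the paper's conventions in Appendix A, where a map is declared $k$-connected precisely when every homotopy fiber is $(k-1)$-connected (with $(-1)$-connected meaning nonempty), but this does no harm.
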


\proof The homotopy fiber of the map with respect to any object $A_1\rightarrow X_1$ of  $w\cal T(A_1\rightarrow *,d)$ is the same as the homotopy fiber of
$$
w\cal T(A_0\rightarrow *)\rightarrow w\cal T(A_1\rightarrow *),
$$
by Prop. \ref{pushforward-we}. Therefore, by Prop. \ref{hofiber-pushforward}, it is weakly equivalent to the portion of $w\cal T(A_0\rightarrow X_1)$ corresponding to homotopy pushout squares
$$
\xymatrix{
A_0\ar[r]\ar[d] & ?\ar[d]\\
 A_1 \ar[r] & X_1
}
$$
By Corollary \ref{trunc_cor}, this is $(k-d)$-connected, making the map $(k-d+1)$-connected.
\endproof

A key step in the proof of our main result will be a multirelative connectivity statement (\lq the analyticity of pushforward\rq), of which
\ref{pushforward-we-connectivity} is the first, or $1$-cube, case. We informally discuss the $2$-cube case now.

\begin{thm} \label{square-pushforwards}
Let
$$
\xymatrix{
A\ar[r]\ar[d] & A_1\ar[d]\\
A_2\ar[r] & A_{12}
}
$$
be a pushout square of cofibrations in which the horizontal and vertical maps are respectively $k_1$-connected and $k_2$-connected. Then the resulting square (of categories and pushforward functors)
$$
\xymatrix {
w\cal T(A\rightarrow *,d)\ar[r]\ar[d] & w\cal T(A_1\rightarrow *,d)\ar[d]\\
w\cal T(A_2\rightarrow *,d)\ar[r] & w\cal T(A_{12}\rightarrow *,d)
}
$$
is $(k_1+k_2-d)$-cartesian as long as $k_1\geq 2$ and $k_2\geq 2$ and $d\geq 2$. \end{thm}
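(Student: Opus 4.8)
The plan is to reduce the $2$-cartesianness of the square of pushforward functors to a one-variable statement by taking homotopy fibers in one direction, and then to identify those homotopy fibers via \ref{hofiber-pushforward} with portions of factorization categories that are themselves controlled by \ref{trunc_cor} and, ultimately, by Lemma \ref{cocart-replace-lemma}. Concretely, a square of spaces is $(k_1+k_2-d)$-cartesian if and only if, for every basepoint, the map of vertical homotopy fibers is $(k_1+k_2-d)$-connected (this is the $r=2$ case of \ref{fiber-cube-connectivity}, used repeatedly in the paper). So fix a fibrant object $X_{12}$ of $w\cal T(A_{12}\rightarrow *,d)$; pulling back along $A_1\rightarrow A_{12}$ and $A_2\rightarrow A_{12}$ gives compatible objects, and by \ref{pushforward-we} the vertical homotopy fibers of the two horizontal maps of the square can be taken \emph{not} in $w\cal T(-,d)$ but in $w\cal T(-\rightarrow *)$, i.e.\ they agree with the homotopy fibers of the plain pushforward functors $w\cal T(A\rightarrow *)\rightarrow w\cal T(A_2\rightarrow *)$ and $w\cal T(A_1\rightarrow *)\rightarrow w\cal T(A_{12}\rightarrow *)$.

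By Proposition \ref{hofiber-pushforward} (after replacing $A_{12}\rightarrow *$ by a fibration, which changes nothing since the target is $*$), these two homotopy fibers are, respectively, the portion of $w\cal T(A\rightarrow *)$ of factorizations $A\rightarrow Y\rightarrow *$ whose pushforward $A_2\cup_A Y\rightarrow X_2$ is a weak equivalence — i.e.\ the space of homotopy pushout squares completing $A_2\leftarrow A\rightarrow Y$ with corner $X_2$ — and similarly with indices shifted. Thus the map of vertical homotopy fibers sends a homotopy pushout completion over $X_2$ to its pushout over $X_{12}$. Here is where the analysis of Example \ref{cocart-replace}, packaged as Lemma \ref{cocart-replace-lemma}, enters: the relevant square of spaces is $A\rightarrow A_1$ and $A\rightarrow A_2$ (or rather $X_2\rightarrow X_{12}\leftarrow X_1$ after pushing forward), the maps $B_i\rightarrow B_{12}$ playing the role of $X_i\rightarrow X_{12}$ are $k_i$-connected, the $d$-dimensionality hypothesis on $\hocolim(B_1\leftarrow A\rightarrow B_2)\rightarrow B_{12}$ holds because all the structural maps $A_j\rightarrow *$ are homotopically $d$-dimensional in the sense the lemma wants, and the lemma then says the space of such completions is $(k_1+k_2-d-1)$-connected. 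One must feed this back through the homotopy-fiber identification: the \emph{map} of homotopy fibers is then $(k_1+k_2-d)$-connected, by the usual \lq one more than the connectivity of the fibers\rq\ bookkeeping, and that is exactly the claim.

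Some care is required on two fronts. First, strict commutativity: pushforward along a composition is only naturally isomorphic to the composite pushforward, so the square of functors commutes up to canonical isomorphism rather than on the nose, and one must either rectify (as the paper promises to do carefully in section 6) or check that the $2$-cartesianness criterion is insensitive to this, which it is because a natural isomorphism induces a homotopy of maps of nerves and homotopy pullbacks are invariant under such. Second, and this is the main obstacle, one must verify that the identification of homotopy fibers in \ref{hofiber-pushforward} is \emph{natural} in $X_{12}$ with respect to the maps $X_2\rightarrow X_{12}\leftarrow X_1$ — that is, that the square of homotopy fibers one forms is genuinely the square whose cartesianness Lemma \ref{cocart-replace-lemma} measures, not merely one levelwise equivalent to it after incompatible choices. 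This is the same flavor of naturality bookkeeping that pervades section 2, and making it airtight (cofibrant/fibrant replacements commuting with pushforward and pullback, the chain of equivalences in \ref{hofiber-pushforward} being suitably functorial) is where the real work lies; the connectivity estimate itself is then immediate arithmetic. The higher-cube version of exactly this argument is what section 6 carries out in full.
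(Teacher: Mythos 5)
Your overall strategy — reduce cartesianness of the square to a statement about homotopy fibers, identify those fibers via \ref{hofiber-pushforward}, and estimate connectivities by the truncation machinery — is indeed the strategy behind both of the paper's sketched arguments for \ref{square-pushforwards}. But as written, the central step of your proof makes a non-sequitur, because you conflate two different objects: the vertical homotopy fibers themselves, and the homotopy fiber of the map between them.

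Here is the problem concretely. The homotopy fiber of the left vertical map $w\cal T(A\rightarrow *,d)\rightarrow w\cal T(A_2\rightarrow *,d)$ over a point $X_2$ is, by \ref{hofiber-pushforward} and \ref{trunc_cor} (equivalently, by \ref{pushforward-we-connectivity}), only $(k_2-d)$-connected, and likewise for the right vertical fiber. The $(k_1+k_2-d-1)$-connected object that \ref{cocart-replace-lemma} produces is something different: it is the space of completions subject to \emph{three} pushout constraints, involving all of $X_1$, $X_2$, $X_{12}$, which is the homotopy fiber of the map from $w\cal T(A\rightarrow *,d)$ to the homotopy limit of the other three vertices (equivalently, the homotopy fiber of the \emph{map} of vertical fibers). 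Your text slides from one to the other — you describe "a homotopy pushout completion over $X_2$" (which is a point of the vertical fiber) and then invoke \ref{cocart-replace-lemma} to call "the space of such completions" $(k_1+k_2-d-1)$-connected. If you mean the vertical fiber, that connectivity estimate is wrong. If you mean the fiber of the map of fibers, you have never actually produced that object — the paper's first argument explicitly says one must apply \ref{hofiber-pushforward} a second time to identify it, and you skip that step. The subsequent "one more than the connectivity of the fibers" bookkeeping is valid only if the $(k_1+k_2-d-1)$-connected space is the fiber of the map of fibers; knowing the connectivity of source and target of a map (which is all the vertical-fiber estimates give) does not determine the connectivity of the map.

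There are also smaller issues worth noting. Fixing $X_{12}$ and "pulling back along $A_i\rightarrow A_{12}$" is the wrong setup: the diagram in question consists of pushforward functors, and to use \ref{fiber-cube-connectivity} you need to fix an arbitrary basepoint $X_2$ in the target of the left vertical and take $X_{12}$ to be its image under the bottom pushforward — there is no reason a pullback of $X_{12}$ lands in the $d$-dimensional subcategory, nor that every $X_2$ arises this way. And the verification of the dimensionality hypothesis of \ref{cocart-replace-lemma} ("holds because all the structural maps $A_j\rightarrow *$ are homotopically $d$-dimensional") is not what you want to say: what is $d$-dimensional is each $A_j\rightarrow X_j$, and the relevant argument (as in Step Three of the proof of \ref{pushforward-analyticity}) is that the mapping cone $K_*$ is equivalent to $C_*(A_s\rightarrow B_s)$ for any single $s$, hence cohomologically $d$-dimensional. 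Once you either (a) carry out the second application of \ref{hofiber-pushforward} to identify the fiber of the map of fibers, or (b) compute directly the fiber of the map from the initial vertex to the holim of the rest (as section 6 does for general $r$), the estimate from \ref{cocart-replace-lemma} does apply and the bookkeeping closes the argument; but as written that step is missing.
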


We sketch two proofs.

First argument:  Identify the homotopy fibers of the horizontal maps, using Prop. \ref{hofiber-pushforward} as in the proof of
Prop. \ref{pushforward-we-connectivity}. Then, using \ref{hofiber-pushforward} again, identify the homotopy fiber of the induced map between these. Use the truncation theorem to say how highly connected this fiber of fibers is. We invite the reader to fill in the details.

Second argument: (This is the one that will be carefully worked out and used to handle the general case of $r$-cubes in section 6.) The task is to show that for every point in the homotopy limit of
$$
w\cal T(A_2\rightarrow *,d)\rightarrow w\cal T(A_{12}\rightarrow *,d)\leftarrow w\cal T(A_1\rightarrow *,d)
$$
the homotopy fiber of the map from $w\cal T(A\rightarrow *,d)$ is $(k_1+k_2-d-1)$-connected. This means something like the following: fix spaces and maps
$$
\xymatrix {
A_1\ar[r]\ar[d] &  A_{12}\ar[d] &A_2\ar[l]\ar[d]\\
B_1\ar[r]&  B_{12} &B_2\ar[l]
}
$$
such that the vertical maps are $d$-dimensional and the squares are pushouts, and then look at all possible ways of choosing a space $X$ and associated maps to make a cube
$$
\xymatrix{
(A\rightarrow X)\ar[r]\ar[d] & (A_1\rightarrow B_1)\ar[d]\\
(A_2\rightarrow B_2)\ar[r] & (A_{12}\rightarrow B_{12})
}
$$
in which all squares are pushouts and $\pi_1(X)$ is what it should be. Lemma
\ref{cocart-replace-lemma} predicts that the space of all such choices should be $(k_1+k_2-d-1)$-connected.

\section{\label{sketch} Preview of the Main Proof}

We now indicate how the analyticity statement proves the main result. We consider the simplest case, where the problem is to obtain a connectivity estimate for the inclusion
$$
E^h(P,N-Q)\rightarrow E^h(P,N).
$$
Whereas the corresponding problem for smooth embeddings is more or less trivial, for Poincar\'e embeddings the solution will involve all of the main ideas that are needed for the general case. Recall that the expected connectivity is $n-p-q-1$, and that we are going to settle for $n-p-q-2$.

Let $N$, $P$, and $Q$ be $n$-dimensional Poincar\'e spaces with boundary, and suppose that pieces $\partial_0P$ and $\partial_0Q$ of the boundaries of $P$ and $Q$ are embedded disjointly in the boundary of $N$. More precisely, let $(P;\partial_0P,\partial_1P)$, $(Q;\partial_0Q,\partial_1Q)$, and $(N;\partial_0P\coprod\partial_0Q,\partial_2N)$ be $n$-dimensional Poincar\'e triads.

We have maps (each a pushforward functor followed by a pullback functor)
$$
\xymatrix{
 \cal I_n^h(\partial_1 P\cup\partial_2 N\cup\partial_1Q)\ar[r]\ar[d] & \cal I_n^h(\partial_0 P\cup\partial_2 N\cup\partial_1Q)\ar[d]\\
\cal I_n^h(\partial_1 P\cup\partial_2 N\cup\partial_0Q)\ar[r] & \cal I_n^h(\partial_0 P\cup\partial_2 N\cup\partial_0Q)
}
$$
The homotopy fiber of the lower horizontal map with respect to $N$ is $E^h(P,N)$. Any choice of an embedding of $Q$ in $N$ (consistent with the given embedding of $\partial_0Q$ in $\partial N$), gives a point in $I_n^h(\partial_0 P\cup\partial_2 N\cup\partial_1Q)$ (the \lq complement\rq\ of $Q$ in $N$), and then the homotopy fiber of the upper horizontal map with respect to that point is $E^h(P,N-Q)$. The statement to be proved is that for every such choice the induced map
$$
E^h(P,N-Q)\rightarrow E^h(P,N)
$$
of homotopy fibers is $(n-p-q-2)$-connected. Equivalently, the statement to be proved is that the square is $(n-p-q-2)$-cartesian. It is in this formulation, symmetrical with respect to $P$ and $Q$, that it will proved.

The maps in the square above are obtained from maps
$$
\xymatrix{
 \partial_1 P\cup\partial_2 N\cup\partial_1Q\ar[r] \ar[d]&   P\cup\partial_2 N\cup\partial_1Q &\partial_0 P\cup\partial_2 N\cup\partial_1Q\ar[l]\ar[d]\\
 \partial_1 P\cup\partial_2 N\cup Q&  {} &\partial_0 P\cup\partial_2 N\cup Q\\
\partial_1 P\cup\partial_2 N\cup\partial_0Q\ar[r]\ar[u] & P\cup\partial_2 N\cup\partial_0Q & \partial_0 P\cup\partial_2 N\cup\partial_0Q\ar[l]\ar[u]
}
$$
Each right or downward arrow induces a pushforward functor, while each left or upward arrow induces a pullback. Placing $P\cup\partial_2 N\cup Q$ in the center, we arrive at a \lq subdivision\rq\  of the first square above into four squares:
$$
\xymatrix{
 \cal I_n^h(\partial_1 P\cup\partial_2 N\cup\partial_1Q)\ar[r]\ar[d] &  \tilde{\cal I}_n^h( P\cup\partial_2 N\cup\partial_1Q)\ar[r]\ar[d] &\cal I_n^h(\partial_0 P\cup\partial_2 N\cup\partial_1Q)\ar[d]\\
 \tilde{\cal I}_n^h(\partial_1 P\cup\partial_2 N\cup Q)\ar[r]\ar[d] &  \tilde{\cal I}_n^h( P\cup\partial_2 N\cup Q)\ar[r]\ar[d] &\tilde{\cal I}_n^h(\partial_0 P\cup\partial_2 N\cup Q)\ar[d]\\
\cal I_n^h(\partial_1 P\cup\partial_2 N\cup\partial_0Q)\ar[r] & \tilde{\cal I}_n^h(P\cup\partial_2 N\cup\partial_0Q)\ar[r] &\cal I_n^h(\partial_0 P\cup\partial_2 N\cup\partial_0Q)
}
$$
The $\tilde{\cal I}_n^h$ notation is as in Example \ref{example-lift-extend}. For example, $ \tilde{\cal I}_n^h(P\cup\partial_2 N\cup \partial_1Q)$ consists of spaces under  $P\cup\partial_2 N\cup \partial_1Q$ which satisfy $n$-dimensional duality relative to $\partial_0 P\cup\partial_2 N\cup\partial_1Q$, and $ \tilde{\cal I}_n^h(P\cup\partial_2 N\cup Q)$ consists of spaces under  $P\cup\partial_2 N\cup Q$ which satisfy $n$-dimensional duality relative to $\partial_0 P\cup\partial_2 N\cup\partial_0Q$.

For the big square to be be $(n-p-q-2)$-cartesian, it will suffice if all four of the small squares are $(n-p-q-2)$-cartesian. (The proof in the general case (when instead of $Q$  there are $Q_1,\dots ,Q_r$) will use an $(r+1)$-cube subdivided into $2^{r+1}$ cubes.)

The lower right square is $\infty$-cartesian by Prop.
\ref{pullback-functor-diagram}.
Or rather, \ref{pullback-functor-diagram} gives that the square
$$
\xymatrix{
w\mathcal T( P\cup\partial_2 N\cup Q\rightarrow *)\ar[r]\ar[d] &w\mathcal T(\partial_0 P\cup\partial_2 N\cup Q\rightarrow *)\ar[d]\\
w\mathcal T( P\cup\partial_2 N\cup \partial_0Q\rightarrow *)\ar[r] &w\mathcal T(\partial_0 P\cup\partial_2 N\cup \partial_0Q\rightarrow *)
}
$$
is $\infty$-cartesian. To conclude that the lower right square, which is made up of portions of these four categories, is also $\infty$-cartesian, we must be sure (Prop.\ \ref{zero-one}) that for a point of $w\mathcal T( P\cup\partial_2 N\cup Q\rightarrow *)$ to belong to $ \tilde{\mathcal I}_n^h( P\cup\partial_2 N\cup Q)$ it suffices if its image in $w\mathcal T( \partial_0P\cup\partial_2 N\cup Q\rightarrow *)$ belongs to $\tilde{\cal I}_n^h(\partial_0 P\cup\partial_2 N\cup Q)$ and its image in $w\mathcal T( P\cup\partial_2 N\cup \partial_0Q\rightarrow *)$ belongs to$\tilde{\cal I}_n^h(P\cup\partial_2 N\cup\partial_0Q)$. In fact, by definition either one alone suffices.

The upper right square is $(n-p-q-1)$-cartesian by
Prop. \ref{pullback-pushforward-diagram}, with $d=p$ and $k=n-q-1$. Again, more precisely \ref{pullback-pushforward-diagram} shows that
$$
\xymatrix{
w\mathcal T( P\cup\partial_2 N\cup\partial_1Q\rightarrow *)\ar[r]\ar[d] &w\mathcal T(\partial_0 P\cup\partial_2 N\cup\partial_1Q\rightarrow *)\ar[d]\\
w\mathcal T( P\cup\partial_2 N\cup Q\rightarrow *)\ar[r] &w\mathcal T(\partial_0 P\cup\partial_2 N\cup Q\rightarrow *)
}
$$
is $(n-p-q-1)$-cartesian. To see that the same holds for the upper right square, we use \ref{zero-one} again, noting that a point of $w\mathcal T( P\cup\partial_2 N\cup \partial_1Q\rightarrow *)$ belongs to $ \tilde{\mathcal I}_n^h(P\cup\partial_2 N\cup Q)$ if it maps into ${\cal I}_n^h(\partial_0 P\cup\partial_2 N\cup \partial_1Q)$.

The lower left square is just like the upper right but with $P$ and $Q$ reversed, so it is also $(n-q-p-1)$-cartesian.

The upper left square is where the analyticity of pushforward is used. We have a pushout square of cofibrations
$$
\xymatrix{
\partial_1P\cup\partial_2N\cup\partial_2Q  \ar[r]\ar[d] & P\cup\partial_2N\cup\partial_2Q\ar[d]\\
\partial_1P\cup\partial_2N\cup Q  \ar[r] & P\cup\partial_2N\cup Q
}
$$
in which the horizontal and vertical maps are respectively $k_1$-connected and $k_2$-connected with $k_1=n-p-1\geq 2$ and $k_2=n-q-1\geq 2$. Abbreviate this last square as
$$
\xymatrix{
A \ar[r]\ar[d] & A_1\ar[d]\\
A_2 \ar[r] & A_{12}\, .
}
$$
Since a Poincar\'e pair of formal dimension $n$
is homotopically $n$-dimensional, the four
categories in that upper left square
$$
\xymatrix{
\cal I_n^h(A) \ar[r]\ar[d] & \tilde{\cal I}_n^h(A_1)\ar[d]\\
\tilde{\cal I}_n^h(A_2) \ar[r] & \tilde{\cal I}_n^h(A_{12})
}
$$
are portions of the categories in the square:
$$
\xymatrix{
w\cal T(A\rightarrow *,n)\ar[r]\ar[d] & w\cal T(A_1\rightarrow *,n)\ar[d]\\
w\cal T(A_2\rightarrow *,n)\ar[r] & w\cal T(A_{12}\rightarrow *,n)\, .
}
$$
The latter square is $k$-cartesian where $k=k_1+k_2-n=n-p-q-2$, by Theorem
\ref{square-pushforwards}.
Once more we check that Prop. \ref{zero-one} applies.
For an object of $T(A\rightarrow *,n)$ it is not the case that  Poincar\'e duality is implied by Poincar\'e duality after pushforward along $\partial_1P\rightarrow P$ or by Poincar\'e duality after pushforward along $\partial_1Q\rightarrow Q$, but it is implied (Prop.\ \ref{poincare-gluing}) by the conjunction of these.

\section{\label{precise anal} The Analyticity Proof}

We carefully state and prove the generalization of Theorem
\ref{square-pushforwards} to cubes of any dimension.

Let $A_{\bullet}$ be a pushout cube of cofibrations. In other words, let
$$
\lbrace A_{\emptyset}\rightarrow A_i| 1\leq i\leq r\rbrace
$$
be cofibrations and let $A_S$ be the union along $A_{\emptyset}$ of the $A_i$ for $i\in S$.

To each $S\subset \underline r$ is associated the category $w\cal T(A_S\rightarrow *)$, and when $S\subset T\subset \underline r$ there is a pushout functor $w\cal T(A_S\rightarrow *)\rightarrow w\cal T(A_T\rightarrow *)$. This does not precisely amount to a functor
$$
S\mapsto w\cal T(A_S\rightarrow *),
$$
because composition is not strictly preserved. We rectify this, introducing a genuine functor
$$
S\mapsto R_S
$$
such that $R_S$ is categorically equivalent to $w\cal T(A_S\rightarrow *)$ for each $S$.

Denote by $A_{S\subset \bullet}$ the \lq face\rq\ of the cube $A_{\bullet}$ which is the restriction to the poset of subsets of $\underline r$ containing $S$. Let an object (briefly denoted $X$) of $R_S$ be a map
$$
A_{S\subset \bullet}\rightarrow X_{S\subset \bullet}
$$
of $(\underline r-S)$-cubes such that for every choice of $T$ and $U$ with $S\subset T\subset U\subset \underline r$ the square
$$
\xymatrix{
A_T\ar[r]\ar[d] &  X_T\ar[d]\\
A_U\ar[r] & X_U
}
$$
is a categorical pushout. A morphism between two such objects $X$ and $Y$ is a collection of weak equivalences $X_T\rightarrow Y_T$ respecting all the structure, in other words an (objectwise) equivalence of cubes under $A_{S\subset \bullet}$. There is a forgetful functor $R_S\rightarrow R_T$ when $S\subset T$, and this makes $R_S$ a functor of $S$.  A forgetful functor taking $X$ to $X_S$ gives an equivalence of categories from $R_S$ to $wT(A_S\rightarrow *)$. The diagram
$$
\xymatrix{
R_S\ar[r]\ar[d] &  w\cal T(A_S\rightarrow *)\ar[d]\\
R_T\ar[r] & w\cal T(A_T\rightarrow *)
}
$$
commutes up to canonical isomorphism.

Given an integer $d$, there is also the subfunctor $R_{\bullet}(d)$, defined by imposing the restriction \lq homotopical dimension $\leq d$\rq\ on all maps $A_T\rightarrow X_T$.

\begin{thm}[\lq Analyticity of Pushforward\rq]
\label{pushforward-analyticity} Let $A_{\bullet}$ be a pushout cube of cofibrations and assume that for each $i$ the cofibration $A_{\emptyset}\rightarrow A_i$ is $k_i$-connected. Assume also that $d\geq 2$ and that $k_i\geq 2$ for all $i$. Then the cube of categories
$$
S\mapsto R_S(d)\sim w\cal T(A_S\rightarrow *,d)
$$
defined above is $(2-r+k_1+\dots +k_r-d)$-cartesian.
\end{thm}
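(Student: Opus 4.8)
The plan is to prove Theorem~\ref{pushforward-analyticity} by the method sketched as the ``second argument'' for the square case in Section~\ref{analyticity}, carried out for $r$-cubes. (The case $r=1$ is \ref{pushforward-we-connectivity}, so we may assume $r\geq2$.) By the standard reduction for cubical diagrams (Appendix~B; cf.\ \ref{fiber-cube-connectivity}), it suffices to show that for every point of $\holim_{\emptyset\neq S\subseteq\underline r}R_S(d)$ the homotopy fiber over it of the canonical map $R_{\emptyset}(d)\rightarrow\holim_{\emptyset\neq S}R_S(d)$ is $(1-r+k_1+\dots+k_r-d)$-connected.

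First I would unwind such a point. Using the equivalences $R_S(d)\sim w\cal T(A_S\rightarrow *,d)$ and \ref{replacements}, a point amounts to a coherent family of fibrant and cofibrant objects $B_S$ of $w\cal T(A_S\rightarrow *,d)$, one for each nonempty $S$, together with weak equivalences identifying $A_T\cup_{A_S}B_S$ with $B_T$ whenever $\emptyset\neq S\subseteq T$; all the structural maps may be arranged to be fibrations. Now iterate \ref{hofiber-pushforward} exactly as in the proofs of \ref{pushforward-we-connectivity} and of \ref{square-pushforwards}, using \ref{pushforward-we} to see that homotopical $d$-dimensionality of $A_{\emptyset}\rightarrow X$ is automatic once the maps $A_i\cup_{A_{\emptyset}}X\rightarrow B_i$ are equivalences. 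This identifies the homotopy fiber in question with a \emph{space of cube completions}: writing $Z$ for $\holim_{\emptyset\neq S}B_S$, it is the portion of $w\cal T(A_{\emptyset}\rightarrow Z)$ consisting of those factorizations $A_{\emptyset}\rightarrow X\rightarrow Z$ for which every induced map $A_S\cup_{A_{\emptyset}}X\rightarrow B_S$ is a weak equivalence --- equivalently, for which $\pi_1(X)\rightarrow\Gamma$ is an isomorphism and the relevant squares of the $(r+1)$-cube $S\mapsto(A_S\rightarrow B_S)$, with $B_{\emptyset}:=X$, are homotopy pushouts. Here $\Gamma$ is the common fundamental group of the path-connected spaces in play (all $k_i\geq2$), and one uses the dual Blakers--Massey theorem \ref{dual} to pass from ``homology pushout $+$ correct $\pi_1$'' to ``homotopy pushout.'' This is the higher-dimensional analogue of the discussion preceding Lemma~\ref{cocart-replace-lemma}.

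Next I would recognize this completion space as a truncation space, as in Example~\ref{ur-completion} and Lemma~\ref{cocart-replace-lemma}. Let $C_*(-)$ denote chains with coefficients in the local system $\Bbb Z\Gamma$, let $K_*$ be the mapping cone of $C_*(A_{\emptyset})\rightarrow\holim_{\emptyset\neq S}C_*(B_S)$, and let $\alpha\colon C_*(A_{\emptyset}\rightarrow Z)\rightarrow K_*$ be the evident comparison; then the completion space is $\Trunc(A_{\emptyset}\rightarrow Z,\alpha)$. Two numerical inputs feed into the truncation theorem of Section~\ref{truncation}. For the connectivity of $\alpha$: the $r$-cube $\widehat B_{\bullet}$ with $\widehat B_{\emptyset}=Z$ and $\widehat B_S=B_S$ for $S\neq\emptyset$ is $\infty$-cartesian by construction, and each edge in its $i$-th direction is $k_i$-connected (for $S\neq\emptyset$ a cobase change of $A_{\emptyset}\rightarrow A_i$, for $S=\emptyset$ at least that highly connected), so by the higher dual Blakers--Massey theorem (Appendix~B) it is $(1+k_1+\dots+k_r)$-cocartesian; since $C_*$ preserves homotopy colimits and the total homotopy fiber and total homotopy cofiber of an $r$-cube of chain complexes differ by a shift of $r$, one gets $\operatorname{cone}(\alpha)\simeq C_*(\operatorname{tcofib}\widehat B_{\bullet})[1-r]$, which is $N$-connected with $N=2-r+k_1+\dots+k_r\geq r+2\geq3$. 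For the cohomological dimension of $K_*$: since the sub-cube $A_{\bullet}$ is strongly cocartesian, $\operatorname{tcofib}$ of the $r$-cube $\overline B_{\bullet}$ with $\overline B_{\emptyset}=A_{\emptyset}$, $\overline B_S=B_S$, agrees with that of the quotient cube $S\mapsto B_S/A_S$, which is trivial at $\emptyset$ and, at each nonempty $S$, is the homotopically $\leq d$-dimensional complex $C:=B_i/A_i$ ($i\in S$, independent of the choice since $B_S=A_S\cup_{A_i}B_i$), with all structural maps equivalences; hence $\operatorname{tcofib}\overline B_{\bullet}\simeq C[r-1]$ and $K_*\simeq C_*(C)$ is cohomologically $\leq d$-dimensional. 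The truncation theorem now gives that $\Trunc(A_{\emptyset}\rightarrow Z,\alpha)$ is $(N-d-1)=(1-r+k_1+\dots+k_r-d)$-connected, which is what was needed.

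The main obstacle is the second paragraph: the iterated use of \ref{hofiber-pushforward} must be carried out with strictly commutative diagrams, which forces one to introduce the appropriate subdivided cubical diagrams and functorial fibrant/cofibrant replacements so that the homotopy fiber of $R_{\emptyset}(d)\rightarrow\holim_{\emptyset\neq S}R_S(d)$ is \emph{literally} (a portion of) a factorization category to which the truncation theorem applies. Essentially all of the technical labour lies here; the numerical estimates of the third paragraph are the $r$-cube versions of computations already done for squares, and in that sense contain no new ideas.
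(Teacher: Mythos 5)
Your proposal tracks the paper's argument in its overall strategy and, once the fiber has been identified with a truncation category, your numerical inputs to the truncation theorem are essentially the same as the paper's. Both compute the connectivity of $\alpha$ by applying the dual Blakers--Massey theorem~\ref{dual} to the $r$-cube $\widehat B_\bullet$ formed by $\lim_{S\neq\emptyset}B_S$ and the spaces $B_S$, both compute the cohomological dimension of $K_*$ by reducing it to the dimension of $C_*(A_i\rightarrow B_i)$ for a single $i$, and both conclude via the truncation theorem of Section~\ref{truncation}. (A small imprecision: the dual Blakers--Massey theorem is applied in the paper by checking cartesianity of back faces, all of which live in the strongly-cocartesian subcube $\{B_S\}_{S\neq\emptyset}$; your phrasing in terms of ``each edge in its $i$-th direction'' is not literally the hypothesis of~\ref{dual}, though it leads to the right conclusion here.)

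The genuine gap is exactly where you flag it, and it is more than ``technical labour'': a point of $\holim_{\emptyset\neq S}NR_S(d)$ is not simply a coherent family of objects $B_S$ with compatible structure maps that ``may be arranged to be fibrations.'' The nerves $NR_S(d)$ are not Kan complexes, the diagram $S\mapsto NR_S(d)$ is not injectively fibrant, and a vertex of the homotopy limit a priori carries higher coherence data. This is precisely what the paper's Steps One and Two are for. The paper replaces $R_\bullet$ by two auxiliary cubes, $R_\bullet^h$ (pushouts relaxed to homotopy pushouts) and $R_\bullet^f$ (homotopy pushouts plus an injective fibrancy condition on the cubes $X_{S\subset\bullet}$), with weak equivalences $R_\bullet(d)\rightarrow R_\bullet^h(d)\leftarrow R_\bullet^f(d)$. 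Step Two shows that $NR^f_\emptyset(d)\rightarrow\lim_{\emptyset\neq S}NR^f_S(d)$ is a quasifibration of simplicial sets, so that strict fibers over vertices compute homotopy fibers; this uses an auxiliary square factoring the map through $w\Arr^f\cal T(A_\emptyset\rightarrow *)\rightarrow w\cal T(A_\emptyset\rightarrow *)$ and an explicit verification of the quasifibration property by first- and last-vertex maps. Step One then shows that the strict limit and the homotopy limit of the $R^f_\bullet(d)$ agree, by an induction on $r$ which itself reuses the quasifibration argument. Only then does a vertex of the homotopy limit literally correspond to a coherent family $\{B_S\}$ of injectively fibrant cubes, and the strict fiber become a portion of $w\cal T^f(A_\emptyset\rightarrow\lim_{S\neq\emptyset}B_S)$ to which the truncation theorem applies. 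Your ``iterate~\ref{hofiber-pushforward}'' route (the paper's first argument, sketched only for squares) could also in principle be made rigorous, but doing so runs into exactly the same rectification problem, because the pushforward functors compose only up to isomorphism; so one way or another the machinery of Steps One and Two (or some substitute) cannot be bypassed.
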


\proof We first introduce two other cubes related to $R_{\bullet}$ by maps
$$
R_{\bullet}\rightarrow R_{\bullet}^h\leftarrow R_{\bullet}^f.
$$
The category $R^h_S$ is defined just like $R_S$ except that the condition on objects is relaxed: instead of requiring each square
$$
\xymatrix{
A_T\ar[r]\ar[d] &  X_T\ar[d]\\
A_U\ar[r] & X_U
}
$$ to be a categorical pushout, we require it to be a homotopy
pushout. Recall that this means that it strictly commutes and induces
a weak equivalence
$\hocolim (A_U\leftarrow A_T\rightarrow X_T)\rightarrow X_U$.
The morphisms in $R^h_S$ are again defined to be
the objectwise equivalences. Again $R^h_S$ depends functorially on
$S$. The inclusion $R_S\rightarrow R^h_S$ is natural in $S$, and for
every $S$ it is a weak equivalence (i.e., induces a weak equivalence
of nerves), by an argument using functorial factorization just as in
the proof of \ref{replacements}. The full subcategory $R^f_S\subset
R^h_S$ is defined by imposing an additional condition on objects, namely that the
cube $X_{S\subset \bullet}$ is injectively fibrant. That is, for every
$T$ the canonical map
$$
X_T\rightarrow \lim_{T\subsetneq U}X_U
$$
is required to be a fibration. Again the construction is functorial in $S$. Again the inclusion $R^f_S\rightarrow R^h_S$ is natural and is a weak equivalence. All of this remains true if the classes of objects are restricted by imposing the condition of homotopical $d$-dimensionality on the maps $A_T\rightarrow X_T$. Thus we have weak equivalences of cubes
$$
R_{\bullet}(d)\rightarrow R_{\bullet}^h(d)\leftarrow R_{\bullet}^f(d).
$$

We have to show that $R_{\bullet}^f(d)$ is $k$-cartesian where $k=2-r+k_1+\dots +k_r-d$, in other words that the composed map of simplicial sets
$$
NR^f_{\emptyset}(d)\rightarrow \lim_{\emptyset\neq S}NR^f_S(d)
\rightarrow \holim_{\emptyset\neq S} NR^f_S(d)
$$
is $k$-connected, in other words that its homotopy fibers are $(k-1)$-connected. The rest of the proof can be outlined as follows:
\begin{itemize}
\item {\it Step One: }Show that the second map above (lim to holim) is a weak equivalence, so that the question becomes one about the homotopy fibers of the first map.

\item {\it Step Two: }Show that the first map above is a quasifibration (in a sense to be explained below), so that the question becomes one about the strict fibers of the map.

\item {\it Step Three: }Determine the connectivity of these strict fibers, which are nerves of certain categories, by means of the truncation theorem.
\end{itemize}

We take the steps in reverse order.

Step Three: Fix an object of $\lim_{\emptyset\neq S}R^f_S(d)$. This consists precisely of a functor
$$
S\mapsto B_S
$$
from the poset of nonempty subsets of $\underline r$ to spaces, plus a natural map $A_S\rightarrow B_S$, such that (1) for every $T$ and $U$ with $\emptyset\neq T\subset U\subset \underline r$ the square
$$
\xymatrix{
A_T\ar[r]\ar[d] &  B_T\ar[d]\\
A_U\ar[r] & B_U
}
$$
is a homotopy pushout, (2) for every $S$, the homotopical dimension of $B_S$ relative to $A_S$ is at most $d$, and (3) for every nonempty $T$ the canonical map
$$
B_T\rightarrow \lim_{T\subsetneq U}B_U
$$
is a fibration.

An object of $R^f_{\emptyset}$ consists of all of that data plus a space $X$ and maps
$$
A_{\emptyset}\rightarrow X\rightarrow \lim_{S\neq \emptyset}B_S
$$
such that the diagram commutes
$$
\xymatrix{
A_{\emptyset}\ar[r]\ar[d] &  X\ar[d]\\
\lim_{S\neq \emptyset}A_S\ar[r] & \lim_{S\neq \emptyset}B_S
}
$$
and such that for every nonempty $U$ (or equivalently for every singleton $U=\lbrace s\rbrace$) the square
$$
\xymatrix{
A_{\emptyset}\ar[r]\ar[d] &  X\ar[d]\\
A_U\ar[r] & B_U
}
$$
is a homotopy pushout, and such that the map $X\rightarrow \lim_{S\neq \emptyset}B_S$ is a fibration. Thus the strict fiber of the map $NR^f_{\emptyset}(d)\rightarrow \lim_{\emptyset\neq S}NR^f_S(d)$ over a vertex is isomorphic to a portion of the category ${w\cal T^f(A_{\emptyset}\rightarrow
\lim_{S\neq \emptyset}B_S)}$.

Using the assumptions that $d\geq 2$ and $k_i\geq 2$, we can identify the portion in question as (the fibrant objects of) a truncation category. The argument is much as in the proof of \ref{cocart-replace-lemma}. Each map $B_S\rightarrow B_T$ is $2$-connected, being related by a homotopy pushout square to a $2$-connected map $A_S\rightarrow A_T$. We are free to assume that $B_{\underline r}$ is path-connected. Let $\Gamma$ be its fundamental group. Every $B_S$ is then also path-connected with fundamental group $\Gamma$.

Higher Blakers-Massey arguments applied to the $\infty$-cartesian cube formed by the spaces $B_S$ and the space $ \lim_{S\neq \emptyset}B_S$ show that $ \lim_{S\neq \emptyset}B_S$ is also path-connected, with the same fundamental group. We give details in the case $r=3$ only. The square
$$
\xymatrix{
B_3\ar[r]\ar[d] & B_{13}\ar[d]\\
 B_{23} \ar[r] & B_{123}
 }
$$
is $\infty$-cocartesian and the maps $B_3\rightarrow B_{13}$ and $B_3\rightarrow B_{23}$ are respectively $k_1$- and $k_2$-connected, so the square is $(k_1+k_2-1)$-cartesian, in particular $2$-cartesian. The parallel square
$$
\xymatrix{
\lim_{S\neq \emptyset} B_S\ar[r]\ar[d] & B_{1}\ar[d]\\
 B_{2} \ar[r] & B_{12}
 }
$$
is therefore also $2$-cartesian, since the $3$-cube formed by both squares is $\infty$-cartesian. The map $B_2 \rightarrow B_{12}$ is $k_1$-connected, therefore $2$-connected, and it follows that the parallel map $\lim_{S\neq \emptyset}B_S \rightarrow B_1$ is also $2$-connected.

Let $C_*(-)$ denote chains with coefficients in $\mathbb Z\Gamma$. There is an obvious map
$$
\beta\: C_*(\lim_{S\neq \emptyset}B_S)\rightarrow
\holim_{S\neq\emptyset} C_*(B_S)
$$
Let $\alpha$ be the resulting map
$$
C_*(A\rightarrow \lim_{S\neq \emptyset}B_S)\rightarrow K_*,
$$
where $K_*$ is the algebraic mapping cone of
$$
C_*(A)\rightarrow \holim_{S\neq\emptyset} C_*(B_S).
$$

The canonical map from $K_*$ to $C_*(A_s\rightarrow B_s)$ is an equivalence, for each $s\in \underline r$. Indeed, since $A_{\bullet}$ is $\infty$-cocartesian we have an equivalence
$$
C_*(A)\rightarrow \holim_{S\neq\emptyset} C_*(A_S)
$$
and thus an equivalence
$$
K_*\rightarrow \holim_{S\neq\emptyset} C_*(A_S\rightarrow B_S),
$$
while for every nonempty $T\subset  \underline r$ we also have an equivalence
$$
\holim_{S\neq\emptyset} C_*(A_S\rightarrow B_S)\rightarrow C_*(A_T\rightarrow B_T).
$$
This last is a consequence of the fact that,
by (1), the map
$$
C_*(A_T\rightarrow B_U)\rightarrow C_*(A_T\rightarrow B_U)
$$
is an equivalence for every $\emptyset\neq T\subset U\subset \lbrace r\rbrace$.

Therefore, if a factorization
$$
A\rightarrow X\rightarrow \lim_{S\neq\emptyset} B_S
$$
is a truncation relative to $\alpha$, then for each $s$ the resulting square
$$
\xymatrix{
A\ar[r]\ar[d] & X\ar[d]\\
 A_s\ar[r] & B_s
 }
$$
is a homology pushout.

If the truncation is good, then the map $X\rightarrow B_s$ is $2$-connected and therefore the square is a homotopy pushout. In other words, every good truncation corresponds to an object of $R^h_{\emptyset}$.

Conversely, if a factorization corresponds to an object of $R^h_{\emptyset}$ then (by using any $s$ and reversing the argument above) it follows that it is a good homology truncation.

To complete Step Three we need the cohomological dimension of $K_*$ and the connectivity of $\alpha$. The former is $d$. The latter is equal to the connectivity of the map $\beta$. This comes down to the question, how cocartesian is the $r$-cube formed by $\lim_{S\neq \emptyset}B_S$ and the various spaces $B_S$? We use the higher Blakers-Massey theorems. The cube is $\infty$-cartesian. For a proper nonempty subset $T$ the back $T$-face is strongly cocartesian and therefore $(1+ \Sigma_{s\in T} (k_s-1))$-cartesian. The sum of these numbers over a partition is minimized by a two-part partition and this minimum value is $(2-r+\Sigma k_s)$. Therefore the cube is ${(1+\Sigma k_s)}$-cocartesian. It follows that $\beta$ is ${(2-r+\Sigma k_s)}$-connected, $\alpha$ is ${(2-r+\Sigma k_s)}$-connected, and by the truncation theorem the fiber is ${(1-r+\Sigma k_s-d)}$-connected.

Step Two: A map $X\rightarrow Y$ of spaces is called a quasifibration if for every point in $Y$ the canonical map from fiber to homotopy fiber is a weak equivalence. Let us call a map $X\rightarrow Y$ of simplicial sets a quasifibration if it satisfies the following condition (which implies that after realization it is a quasifibration of spaces): For every morphism of simplices over $Y$, i.e. for every diagram of simplicial sets of the form
$$
\Delta^l\rightarrow \Delta^k\rightarrow Y,
$$
the associated map
$$
X\times_Y\Delta^l\rightarrow X\times_Y\Delta^k
$$
is a weak equivalence. Note that this holds in all cases if it holds in the case when $l=0$ and $\Delta^0\rightarrow \Delta^k$ is the first or last vertex map, because in the category of standard simplices all maps become invertible as soon as one inverts all first and last vertex maps. (Proof: Every map $\Delta^l\rightarrow \Delta^k$ may be composed with a first vertex map to get a map $\Delta^0\rightarrow \Delta^k$. Every map $\Delta^0\rightarrow \Delta^k$ may be expressed as a last vertex map $\Delta^0\rightarrow \Delta^m$ followed by a map $\Delta^m\rightarrow \Delta^k$ whose composition with a first vertex map is a first vertex map.)

Clearly in the category of simplicial sets any map obtained by pullback from a
quasifibration is a quasifibration. Moreover, if $X'\subset X$ is a union of components of $X$ and $X\rightarrow Y$ is a quasifibration then $X'\rightarrow Y$ is a quasifibration.

Consider the functor
$$
R^f_{\emptyset}(d)\rightarrow \lim_{\emptyset\neq S}R^f_S(d).
$$
It is part of a square
$$
\xymatrix{
R^f_{\emptyset}(d)\ar[r]\ar[d] & w\Arr^f \cal T(A_{\emptyset}\rightarrow *)\ar[d]\\
\lim_{\emptyset\neq S}R^f_S(d)\ar[r] & w\cal T(A_{\emptyset}\rightarrow *)
}
$$
which we will describe shortly. This is not a pullback square, but it embeds $R^f_{\emptyset}(d)$ as a union of components of the fiber product. Therefore it will suffice if the right-hand arrow is a quasifibration.

Here is the square: The lower arrow is the functor taking $A_{\bullet}\rightarrow B_{\bullet}$ to the composed map
$$
A_{\emptyset}\rightarrow \lim_{S\neq \emptyset}A_S\rightarrow \lim_{S\neq \emptyset}B_S;
$$
here we have used the fact that the diagrams $B_{\bullet}$ are injectively fibrant to ensure that this really takes weak equivalences to weak equivalences. We define the category $w\Arr^f \cal T(A_{\emptyset}\rightarrow *)$. Its objects are the diagrams
$$
A_{\emptyset}\rightarrow X\rightarrow Y
$$
such that $X\rightarrow Y$ is a fibration. Its morphisms are the maps of diagrams that are weak equivalences at $X$ and $Y$ (and identity at $A_{\emptyset}$). The right-hand arrow in the square takes $A_{\emptyset}\rightarrow X\rightarrow Y$ to the composition $A_{\emptyset}\rightarrow Y$. The upper arrow takes $A_{\bullet}\rightarrow B_{\bullet}$ to the composition $A_{\emptyset}\rightarrow B_{\emptyset}\rightarrow  \lim_{S\neq \emptyset}B_S$; again the fibrancy condition is being used.

We now argue that the right-hand arrow is a quasifibration. Take any $p$-simplex in the nerve of $w\cal T(A_{\emptyset}\rightarrow *)$, in other words any diagram
$$
Y_0\rightarrow \dots\rightarrow Y_p
$$
of weak equivalences of spaces under $A_{\emptyset}$. The fiber product
$$
Nw\Arr^f \cal T(A_{\emptyset}\rightarrow *)\times_{Nw\cal T(A_{\emptyset}\rightarrow *)}\Delta^p
$$
 over this simplex is the nerve of a category.  An object is a pair $(i,X\rightarrow Y_i)$, where $0\leq i\leq p$ and $X\rightarrow Y_i$ is a fibration of spaces under $A_{\emptyset}$, and a morphism $(i,X\rightarrow Y_i)\rightarrow (i',X'\rightarrow Y_{i'})$ (necessarily with $i\leq i'$) is a weak equivalence $X\rightarrow X'$ making the square
$$
\xymatrix{
X\ar[r]\ar[d] & X'\ar[d]\\
Y_i\ar[r] & Y_{i'}
}
$$
commute.

We need a functor from the category just described to the fiber over the first vertex (i.e., to $w\cal T^f(A_{\emptyset}\rightarrow Y_0)$), and we need this functor to be an inverse, up to weak equivalence, to the inclusion. We also need another such functor for the last vertex. For the last vertex, send $X\rightarrow Y_i$ to a fibrant replacement for the composition $X\rightarrow Y_i\rightarrow Y_p$. For the first, send $X\rightarrow Y_i$ to $X\times_{Y_i}Y_0\rightarrow Y_0$.

Step One: This uses the work done in Step Two. We have to compare a limit with a homotopy limit. The limit can be described as the limit (fiber product) of the following diagram of three limits
$$
NR^f_{\lbrace r\rbrace}(d)\rightarrow \lim_{\emptyset\neq S}NR^f_{S\cup\lbrace r\rbrace}(d)\leftarrow  \lim_{\emptyset\neq S}NR^f_ S(d),
$$
where $S$ now ranges over subsets of $\lbrace 1,\dots , r-1\rbrace$. The homotopy limit is likewise the homotopy limit of a diagram of homotopy limits
$$
NR^f_{\lbrace r\rbrace}(d)\rightarrow
\holim_{\emptyset\neq S} NR^f_{S\cup\lbrace r\rbrace}(d)\leftarrow
 \holim_{\emptyset\neq S} NR^f_ S(d).
$$
The left arrow in the diagram of limits above is a quasifibration,
by an argument essentially identical to the argument used in Step
Two. It follows that the limit of the limits is equivalent to the
homotopy limit of limits. It remains only to show that the canonical
map from the diagram of three limits to the diagram of three homotopy
limits is an equivalence in all three places, for then it will follow
that it induces an equivalence of homotopy limits. On the right side
the map is an equivalence by induction on $r$. On the left side it is
an identity map. In the middle it is an equivalence, essentially by
induction on $r$ again; there is an equivalence, natural in $S$, from
$R^f_{S\cup\lbrace r\rbrace}(d)$ to what would be $R^f_{S}(d)$ if the
original $r$-cube $A_{\bullet}$ were replaced by one of its
$(r-1)$-dimensional faces.  \endproof

\section{\label{Proof} The Main Proof}

We prove Theorem \ref{disjunction} by a straightforward generalization of the argument of section 5. Instead of $P$ and $Q$ we have $P$ and $Q_1,\dots,Q_r$, and instead of breaking up a square into four squares we break up an $(r+1)$-cube into $2^{r+1}$ cubes. Since the approach is to treat $P$ the same as the $Q_i$, we simply call it $Q_{r+1}$ and let $s=r+1\geq 2$

Assume that for each $i\in \underline s=\lbrace 1,\dots ,s\rbrace$ we have an $n$-dimensional Poincare triad $(Q_i;\partial_0Q_i,\partial_1Q_i)$. Let $(N;\partial_0N,\partial_2N)$ be another $n$-dimensional Poincare triad such that $\partial_0N$ is the disjoint union of the $\partial_0Q_i$.

In fact $N$ will play no role for a while. We just use the Poincar\'e triads $Q_i$ and the Poincar\'e pair $(\partial_2N,\amalg_i\partial\partial_0Q_i)$.

In section 5 we drew two-dimensional diagrams. Now we need some notation because we cannot draw $s$-dimensional diagrams. Maybe the quickest way to convey the notation is to say that in the case $s=2$ the diagram
$$
\xymatrix{
 \partial_1 P\cup\partial_2 N\cup\partial_1Q\ar[r] \ar[d]&   P\cup\partial_2 N\cup\partial_1Q\ar[d] &\partial_0 P\cup\partial_2 N\cup\partial_1Q\ar[l]\ar[d]\\
 \partial_1 P\cup\partial_2 N\cup Q\ar[r] &  P\cup\partial_2 N\cup Q &\partial_0 P\cup\partial_2 N\cup Q\ar[l] \\
\partial_1 P\cup\partial_2 N\cup\partial_0Q\ar[r]\ar[u] & P\cup\partial_2 N\cup\partial_0Q\ar[u] & \partial_0 P\cup\partial_2 N\cup\partial_0Q\ar[l]\ar[u]
}
$$
from section 5 would now be written
$$
\xymatrix{
A_\emptyset^\emptyset\ar[r] \ar[d]&   A_\emptyset^{\lbrace 1\rbrace}\ar[d] & A_{\lbrace 1\rbrace}^{\lbrace 1\rbrace}\ar[l]\ar[d]\\
 A_\emptyset^{\lbrace 2\rbrace}\ar[r] &  A_\emptyset^{\lbrace 1,2\rbrace} &A_{\lbrace 1\rbrace}^{\lbrace 1,2\rbrace}\ar[l] \\
A_{\lbrace 2\rbrace}^{\lbrace 2\rbrace}\ar[r]\ar[u] & A_{\lbrace 2\rbrace}^{\lbrace 1,2\rbrace}\ar[u] & A_{\lbrace 1,2\rbrace}^{\lbrace 1,2\rbrace}\ar[l]\ar[u] \, .
}
$$
and that the resulting diagram of categories
$$
\xymatrix{
 \cal I_n^h(\partial_1 P\cup\partial_2 N\cup\partial_1Q)\ar[r]\ar[d] &  \tilde{\cal I}_n^h( P\cup\partial_2 N\cup\partial_1Q)\ar[r]\ar[d] &\cal I_n^h(\partial_0 P\cup\partial_2 N\cup\partial_1Q)\ar[d]\\
 \tilde{\cal I}_n^h(\partial_1 P\cup\partial_2 N\cup Q)\ar[r]\ar[d] &  \tilde{\cal I}_n^h( P\cup\partial_2 N\cup Q)\ar[r]\ar[d] &\tilde{\cal I}_n^h(\partial_0 P\cup\partial_2 N\cup Q)\ar[d]\\
\cal I_n^h(\partial_1 P\cup\partial_2 N\cup\partial_0Q)\ar[r] & \tilde{\cal I}_n^h(P\cup\partial_2 N\cup\partial_0Q)\ar[r] &\cal I_n^h(\partial_0 P\cup\partial_2 N\cup\partial_0Q)
}
$$
or rather a rectified version that strictly commutes, would be written
$$
\xymatrix{
\cal I_\emptyset^\emptyset\ar[r] \ar[d]&   \cal I_\emptyset^{\lbrace 1\rbrace}\ar[d]\ar[r] & \cal I_{\lbrace 1\rbrace}^{\lbrace 1\rbrace}\ar[d]\\
 \cal I_\emptyset^{\lbrace 2\rbrace}\ar[r]\ar[d] &  \cal I_\emptyset^{\lbrace 1,2\rbrace}\ar[r]\ar[d] &\cal I_{\lbrace 1\rbrace}^{\lbrace 1,2\rbrace}\ar[d] \\
\cal I_{\lbrace 2\rbrace}^{\lbrace 2\rbrace}\ar[r] & \cal I_{\lbrace 2\rbrace}^{\lbrace 1,2\rbrace}\ar[r] & \cal I_{\lbrace 1,2\rbrace}^{\lbrace 1,2\rbrace} \, .
}
$$

The general definitions are as follows: For $S\subset T\subset \underline s$ let $A_S^T$ be the pushout of
$$
\partial_2 N\leftarrow \partial\partial_2 N=\coprod_i\partial_0Q_i\rightarrow \coprod_i Q_i(S,T)
$$
where $Q_i(S,T)=\partial_0Q_i$ if $i\in S$, $Q_i(S,T)=Q_i$ if $i\in T-S$, and $Q_i(S,T)=\partial_1Q_i$ if $i\in \underline s -T$.

This space depends functorially on $(S,T)$, covariantly in $T$ and contravariantly in $S$. In other words, if the pairs are ordered by letting $(S,T)\leq(S',T')$ mean $S'\subset S$ and $T\subset T'$ then we have a diagram of spaces indexed by that poset. Inside this diagram of spaces is a cubical diagram for each subset $S\subset \underline s$, consisting of the spaces $A_{S\cap W}^{S\cup W}$. It might be denoted $A_{S\cap \bullet}^{S\cup \bullet}$. It is a pushout cube of cofibrations. The spaces $A_S^S$ are at the \lq corners\rq\ of the big diagram and at the \lq initial corners\rq\ of the little cubes. For each $S$ the space $A_S^S$ satisfies $(n-1)$-dimensional duality, as it is made from $\partial_2N$ by gluing in $\partial_0Q_i$ for $i\in S$ and gluing in $\partial_1Q_i$ for $i\in\underline s - S$ along the appropriate parts of the boundary.

Let $\cal I_S^T$ be the following category: An object $X^\bullet$ is an $(\underline s-T)$-cube under $A_S^{\bullet}$ satisfying two conditions. Thus it consists of (1) spaces $X^U$ for all $U$ such that $T\subset U\subset\underline s$, (2) maps between these making them a covariant functor of $U$, and (3) natural maps $A_S^U\rightarrow X^U$. The additional conditions are that the square
$$
\xymatrix{
A_S^U\ar[r]\ar[d] & X^U\ar[d]\\
A_S^V\ar[r] & X^V}
$$
should be a pushout for every $U$ and $V$ with $T\subset U\subset V$, and that for every $U$ the composed map
$$
A_U^U\rightarrow A_S^U\rightarrow X^U
$$
should satisfy $n$-dimensional relative Poincar\'e duality. A morphism from $X^\bullet$ to $Y^\bullet$ is a collection of weak equivalences $X^U\rightarrow Y^U$ natural in $U$ and compatible with the maps from $A_S^U$.

Because the squares are pushouts, the duality for general $U$ follows from the duality for $U=T$, by Prop. \ref{gluing_pd}. Again because the squares are pushouts, the forgetful functor $\cal I_S^T\rightarrow w\cal T(A_S^T\rightarrow *)$ that takes $A_S^\bullet\rightarrow X^\bullet$ to $A_S^T\rightarrow X^T$ gives an equivalence of categories to a portion of $w\cal T(A_S^T\rightarrow *)$, namely the category $\tilde {\cal I}_n^h(A_S^T)$ of objects that satisfy duality relative to $A_T^T$.

The category $\cal I_S^T$ depends functorially on $(S,T)$, this time covariantly in both variables. In passing from $(S,T)$ to $(S',T')$ where $S\subset S'$ and $T\subset T'$, one simply discards those spaces $X^U$ such that $U$ does not contain $T'$, while for the remaining choices of $U$ one replaces the map $A_S^U\rightarrow X^U$ by its composition with $A_{S'}^T\rightarrow A_S^T$.

The indexing category for this diagram, the poset of pairs $(S,T)$, is the product of $s$ copies of an ordered set having three elements. We think of the diagram as a big $s$-cube subdivided into $2^s$ little $s$-cubes.

The objects in the big cube are the categories $\cal I_S^S$, which are categorically equivalent to the $\cal I_n^h(A_S^S)$ but have the virtue of depending in a strictly functorial way on $S$.

\begin{thm} \label{ISS} Let $s\geq 2$, let $(Q_1,\dots,Q_s)$ be $n$-dimensional Poincar\'e triads $(Q_i;\partial_)Q_i,\partial_1Q_i)$, and let $(\partial_2N,\amalg_i \partial_0Q_i)$ be an $n$-dimensional Poincar\'e pair. Assume that $Q_i$ has homotopy spine dimension $q_i$ relative to $\partial_0Q_i$ and $n-q_i\geq 3$ for every $i$. Then the cubical diagram $\lbrace \cal I_S^S \rbrace$ defined above is $2-n+\Sigma_i(n-q_i-2)$-cartesian.
\end{thm}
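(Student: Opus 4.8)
\emph{Proof sketch.} The argument is the evident generalisation of the $s=2$ case carried out in section 5, with $P$ renamed $Q_s$; once Theorem \ref{ISS} is proved, Theorem \ref{disjunction} follows formally from \ref{fiber-cube-connectivity} together with the homotopy-fibre description of $E^h$ in Example \ref{example-lift-extend}. Fix the data and recall that the categories $\cal I_S^T$ for $S\subseteq T\subseteq\underline s$ form a diagram over the poset of such pairs, which is a product of $s$ copies of the three-term chain $\partial_1<\mathrm{mid}<\partial_0$ (in coordinate $i$: $\partial_1$ if $i\notin T$, $\mathrm{mid}$ if $i\in T\setminus S$, $\partial_0$ if $i\in S$). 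The first move is subdivision: the big $s$-cube $\{\cal I_S^S\}$ occupies the $2^s$ corners of this grid, and it is the $s$-fold iterated composite of the $2^s$ ``little'' $s$-cubes $\{\cal I^{S\cup W}_{S\cap W}\}_{W\subseteq\underline s}$, one for each $S\subseteq\underline s$ — the cube whose $i$th coordinate traverses $\partial_1\to\mathrm{mid}$ if $i\notin S$ and $\mathrm{mid}\to\partial_0$ if $i\in S$. Since $k$-cartesianness is preserved under composing cubes along a shared codimension-one face (Appendix B), it suffices to prove that each little cube is $\big(2-n+\sum_i(n-q_i-2)\big)$-cartesian. In the little cube for $S$ the coordinates $i\notin S$ are pushforward directions (``glue $Q_i$ onto $\partial_1Q_i$'') and the coordinates $i\in S$ are pullback/forgetful directions (``restrict along $\partial_0Q_i\hookrightarrow Q_i$'').

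The crux is the all-pushforward little cube, $S=\emptyset$, namely $\{\cal I_\emptyset^W\}_{W\subseteq\underline s}$. Its underlying $s$-cube of spaces $W\mapsto A_\emptyset^W$ is a pushout cube of cofibrations whose $i$th-direction map is $(n-q_i-1)$-connected — by clause (2) of the definition of homotopy spine dimension applied to $(Q_i,\partial_1Q_i)$, together with excision — and $n-q_i-1\geq 2$ by hypothesis. Since a Poincar\'e pair of formal dimension $n$ is homotopically $n$-dimensional, each $\cal I_\emptyset^W$ is equivalent to the portion $\tilde{\cal I}^h_n(A_\emptyset^W)$ of $w\cal T(A_\emptyset^W\rightarrow *,n)$. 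Theorem \ref{pushforward-analyticity}, applied with $d=n$ and $k_i=n-q_i-1$, shows that the $s$-cube $\{w\cal T(A_\emptyset^W\rightarrow *,n)\}$ is $k$-cartesian with $k=2-s+\sum_i(n-q_i-1)-n=2-n+\sum_i(n-q_i-2)$, exactly the target. To descend from this cube of ambient categories to the cube of portions $\{\cal I_\emptyset^W\}$ I would invoke Prop.\ \ref{zero-one}; its hypothesis here is that an $n$-dimensional $X$ under $A_\emptyset^\emptyset$ satisfies Poincar\'e duality relative to $A_\emptyset^\emptyset$ exactly when, for each singleton $\{i\}$, the pushforward gluing $Q_i$ onto $\partial_1Q_i$ satisfies $n$-dimensional duality relative to $A_{\{i\}}^{\{i\}}$ — and that is precisely Prop.\ \ref{poincare-gluing}, iterated $s$ times (by \ref{gluing_pd} the duality conditions at the deeper vertices of the little cube are automatic once those at the corner and at the $s$ adjacent vertices hold).

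For a little cube with $S\neq\emptyset$ I would show it is \emph{strictly more} than $\big(2-n+\sum_i(n-q_i-2)\big)$-cartesian, each pullback direction only improving the estimate. Induct on $|S|$: fix $i_0\in S$ and view the little cube as a map of $(s-1)$-cubes obtained by holding coordinate $i_0$ at $\mathrm{mid}$, resp.\ at $\partial_0$. The comparison maps between corresponding vertices are forgetful functors along $\partial_0Q_{i_0}\hookrightarrow Q_{i_0}$, whose homotopy fibres are function spaces $F(Q_{i_0},-\,\rel\partial_0Q_{i_0})$ into $n$-dimensional targets (Prop.\ \ref{pre-function-space}/\ref{function-space}); by the obstruction-theory estimate in the proof of \ref{pullback-pushforward-diagram} this direction raises the cartesian estimate by at least $n-q_{i_0}-1>0$ relative to the cube with one fewer pullback direction. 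The base case $|S|=0$ is the computation of the previous paragraph, restricted to the pushforward directions $\underline s\setminus S$, which gives $2-n+\sum_{i\notin S}(n-q_i-2)$; adding back the $|S|$ pullback directions yields at least $\big(2-n+\sum_i(n-q_i-2)\big)+|S|$, more than enough. (The pure-pullback cube $S=\underline s$ is $\infty$-cartesian by the $s$-cube analogue of \ref{pullback-functor-diagram}.) At each stage Prop.\ \ref{zero-one} is again used to pass to the duality portions, with hypotheses checked via \ref{poincare-gluing} and \ref{gluing_pd}. Assembling these estimates over the $2^s$ little cubes and composing gives Theorem \ref{ISS}.

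The genuine difficulty is not analytic: all the real connectivity input lives in Theorem \ref{pushforward-analyticity} and the truncation theorem. The hard part will be organisational — introducing and keeping straight the notation for the $3^s$-grid of factorisation categories and its $2^s$ subdividing cubes, and, the single point demanding real care, verifying the hypotheses of Prop.\ \ref{zero-one} uniformly at every vertex of every little cube, i.e.\ that the relevant Poincar\'e-duality conditions are ``local'' in exactly the sense provided by \ref{poincare-gluing} and \ref{gluing_pd}. Giving a clean uniform treatment of the mixed cubes of Step three — perhaps best done by first establishing the $s$-cube generalisations of Propositions \ref{pullback-functor-diagram} and \ref{pullback-pushforward-diagram}, rather than by the induction sketched above — is the other place where some attention is required. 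Once that bookkeeping is in place, the proof is, as promised in the introduction, ``little more than introducing notation''.
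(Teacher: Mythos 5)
Your subdivision into $2^s$ little cubes $\{\cal I_{S\cap\bullet}^{S\cup\bullet}\}$ is exactly the paper's, and your treatment of the $S=\emptyset$ little cube — apply Theorem~\ref{pushforward-analyticity} with $d=n$, $k_i=n-q_i-1$, then descend to the Poincar\'e portion via \ref{poincare-gluing} and \ref{zero-one} — matches the paper's Case~3 verbatim. The gap is in the cubes with $S\neq\emptyset$, where the paper uses a different and sharper dichotomy that your induction does not reproduce, and I don't see how to make the induction close.

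The paper splits into two cases there. When $|S|>1$, it picks $a,b\in S$ and observes that each of the $2^{s-2}$ parallel $\{a,b\}$-faces of the little cube is a square of pullback functors coming (via Prop.~\ref{pullback-functor-diagram} and \ref{zero-one}) from a pushout square of cofibrations, hence $\infty$-cartesian, so the whole little cube is $\infty$-cartesian. When $|S|=1$, say $S=\{a\}$, it fixes a basepoint $X^\bullet$ in $\cal I_{\{a\}}^{\{a\}}$, passes by \ref{fiber-cube-connectivity} to the $(s-1)$-cube of homotopy fibers $\Map(Q_a, X^T)$, observes that $X^{\{a\}\subset\bullet}$ is a strongly cocartesian pushout cube with $(n-q_i-1)$-connected initial maps, applies the higher Blakers--Massey theorem~\ref{Blakers-Massey-n-ad} to conclude it is $\big(1+\sum_{i\neq a}(n-q_i-2)\big)$-cartesian, and then subtracts $q_a$ for the mapping-space direction. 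That yields $3-n+\sum_i(n-q_i-2)$, i.e.\ only one more than the target: in the critical case $|S|=1$ the margin is exactly $+1$, not $+(n-q_a-1)\geq 2$ as your inductive step asserts.

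This is where your argument breaks. You claim each pullback direction ``raises the cartesian estimate by at least $n-q_{i_0}-1$ relative to the cube with one fewer pullback direction,'' citing the obstruction-theory step in the proof of~\ref{pullback-pushforward-diagram}. But that step has no such content: viewing the $s$-cube as a map of $(s-1)$-cubes and passing to fibers (Prop.~\ref{fiber-cube-connectivity}) tells you the $s$-cube is exactly as cartesian as the $(s-1)$-cube of function spaces $F(Q_{i_0}, Z^\bullet)$, which in turn is $m-q_{i_0}$-cartesian where $m$ is the cartesianness of the cube of target spaces $Z^\bullet$. There is no a priori relation between $m$ and ``the estimate for the little cube with one fewer pullback direction,'' and in particular $m$ is not supplied by the inductive hypothesis. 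In the paper's $|S|=1$ case $m$ is supplied by Blakers--Massey precisely because the relevant $Z^\bullet$ is a strongly cocartesian pushout cube; once $|S|\geq 2$ and you have already peeled off one pullback coordinate, the remaining basepoint cube is no longer a pushout cube of cofibrations (some of its edges are pullback/forgetful restrictions), so Blakers--Massey does not apply and the inductive step you describe has no engine. The paper sidesteps this entirely: for $|S|>1$ it never estimates any such $m$ — it proves the little cube is $\infty$-cartesian outright by the two-pullback-face argument. Your parenthetical remark that ``the pure-pullback cube $S=\underline s$ is $\infty$-cartesian by the $s$-cube analogue of \ref{pullback-functor-diagram}'' points toward the right idea, but the relevant fact is needed for \emph{every} $S$ with $|S|\geq 2$, not just $S=\underline s$, and the paper gets it from a $2$-face argument rather than an ``$s$-cube generalisation'' of \ref{pullback-functor-diagram} (which would only handle cubes all of whose directions are pullbacks).

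So: the $S=\emptyset$ case and the overall subdivision are right; for $S\neq\emptyset$ you need to replace the induction with the paper's two-case argument — Blakers--Massey on the strongly cocartesian cube of interiors (plus the mapping-space dimension shift) for $|S|=1$, and the $\infty$-cartesian $2$-face argument for $|S|\geq 2$ — together with the \ref{zero-one}/\ref{poincare-gluing}/\ref{gluing_pd} bookkeeping you correctly identified as necessary at each vertex.
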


We remind the reader what this has to do with Poincar\'e embeddings. If an object $C^\bullet$ is chosen in $\cal I_{\lbrace s\rbrace}^{\lbrace s\rbrace}$, then in effect we have chosen a Poincar\'e pair $(N,\partial N)$ such that $\partial N$ is $A_{\underline s}^{\underline s}$, the union of $\coprod_i\partial_0Q_i$ and $\partial_2N$, as well as a Poincar\'e embedding of the disjoint union $Q_1\cup\dots\cup Q_{s-1}$ in $N$ compatible with the given Poincar\'e embedding of $\partial_0Q_1\cup\dots\cup \partial_0Q_{s-1}$ in $\partial N$, with complement $C^{\lbrace s\rbrace}$. For each $S\subset\underline{s-1}$, the homotopy fiber of
$$
\cal I_S^S\rightarrow \cal I_{S\cup\lbrace s\rbrace}^{S\cup\lbrace s\rbrace}
$$
with respect to the basepoint determined by $C^\bullet$ is equivalent to the space of Poincar\'e embeddings of $Q_{\lbrace s\rbrace}$ in $C^S$, the complement in $N$ of $Q_S$. The conclusion of the theorem states that this (strictly commutative replacement for the) cube of Poincar\'e embedding spaces ${E^h(Q_{\lbrace s\rbrace},N-Q_\bullet)}$ is $2-n+\Sigma_i(n-q_i-2)$-cartesian.

\begin{proof}[Proof of \ref{ISS}] It is enough if each of the little cubes is ${2-n+\Sigma_i(n-q_i-2)}$-cartesian. There is one of these,  $\cal I_{S\cap\bullet}^{S\cup\bullet}$, for each subset $S\subset\underline s$. We will show the following:

Case 1: If $S$ has more than one element then $\cal I_{S\cap\bullet}^{S\cup\bullet}$ is $\infty$-cartesian.

Case 2: If $S$ has one element then $\cal I_{S\cap\bullet}^{S\cup\bullet}$ is $3-n+\Sigma_i(n-q_i-2)$-cartesian.

Case 3: If $S$ is empty then $\cal I_{S\cap\bullet}^{S\cup\bullet}$ is $2-n+\Sigma_i(n-q_i-2)$-cartesian.

For Case 1, choose two elements $a$ and $b$ in $S$. For each subset $T$ of $\underline s-\lbrace a,b\rbrace$, consider the square
$$
\xymatrix{
\cal I_{S\cap T}^{S\cup T} \ar[r]\ar[d] & \cal I_{(S\cap T)\cup\lbrace a\rbrace}^{S\cup T}\ar[d]\\
\cal I_{(S\cap T)\cup\lbrace b\rbrace}^{S\cup T}\ar[r] &  \cal I_{(S\cap T)\cup\lbrace a, b\rbrace}^{S\cup T}
}
$$
This is one of $2^{s-2}$ parallel faces of the little cube, and it is enough if these squares are all $\infty$-cartesian. The square above can be replaced by the equivalent square
\begin{equation}\label{I-tilde}
\xymatrix{
\tilde {\cal I}_n^h(A_{S\cap T}^{S\cup T}) \ar[r]\ar[d] & \tilde {\cal I}_n^h(A_{(S\cap T)\cup\lbrace a\rbrace}^{S\cup T})\ar[d]\\
\tilde {\cal I}_n^h(A_{(S\cap T)\cup\lbrace b\rbrace}^{S\cup T})\ar[r] &  \tilde {\cal I}_n^h(A_{(S\cap T)\cup\lbrace a, b\rbrace}^{S\cup T})
}
\end{equation}
The categories here are portions of those in the square:
$$
\xymatrix{
w\cal T (A_{S\cap T}^{S\cup T}\rightarrow *) \ar[r]\ar[d] & w\cal T (A_{(S\cap T)\cup\lbrace a\rbrace}^{S\cup T}\rightarrow *)\ar[d]\\
w\cal T (A_{(S\cap T)\cup\lbrace b\rbrace}^{S\cup T}\rightarrow *)\ar[r] &  w\cal T (A_{(S\cap T)\cup\lbrace a, b\rbrace}^{S\cup T}\rightarrow *)
}
$$
This square of pullback functors is $\infty$-cartesian by Prop. \ref{pullback-functor-diagram}, since
$$
\xymatrix{
A_{S\cap T}^{S\cup T}  & A_{(S\cap T)\cup\lbrace a\rbrace}^{S\cup T}\ar[l]\\
A_{(S\cap T)\cup\lbrace b\rbrace}^{S\cup T}\ar[u] &  A_{(S\cap T)\cup\lbrace a, b\rbrace}^{S\cup T}\ar[l]\ar[u]
}
$$
is a pushout square of cofibrations. To conclude that the square
\eqref{I-tilde} is  also $\infty$-cartesian we use Prop. \ref{zero-one}.
In fact, an object $(A_{S\cap T}^{S\cup T}\rightarrow X)$ of $w\cal T (A_{S\cap T}^{S\cup T}\rightarrow *)$ must be in $\tilde {\cal I}_n^h(A_{S\cap T}^{S\cup T})$ if its image $(A_{(S\cap T)\cup\lbrace a\rbrace}\rightarrow X)$ in $w\cal T (A_{(S\cap T)\cup\lbrace a\rbrace}^{S\cup T}\rightarrow *)$ is in $\tilde {\cal I}_n^h(A_{(S\cap T)\cup\lbrace a\rbrace}^{S\cup T})$, because in each case the condition for membership in the subcategory is that $X$ satisfies duality relative to $A_{S\cup T}^{S\cup T}$.

For case 2, suppose $S=\lbrace a\rbrace$ and consider, for each $T\subset \underline s-\lbrace a\rbrace$, the map
$$
\cal I_{S\cap T}^{S\cup T} \rightarrow \cal I_{(S\cap T)\cup\lbrace a\rbrace}^{S\cup T}
$$ These are $2^{s-1}$ parallel edges of the little cube. Fix a choice
$X^\bullet$ of object in $\cal I_{\lbrace a\rbrace}^S$ and thus
compatible choices in $\cal I_{(S\cap T)\cup\lbrace a\rbrace}^{S\cup
T}$ for all $T$. Consider the resulting $(\underline s-\lbrace
a\rbrace$)-cube of homotopy fibers. By Prop.
\ref{fiber-cube-connectivity}
it is enough if the latter cube is
$3-n+\Sigma_i(n-q_i-2)$-cartesian. For each $T$, that homotopy fiber
is equivalent to $\Map(Q_a,X^T)$, the space of maps fixed on
$\partial_0 Q_a$, by Prop. \ref{function-space}. The equivalences are
natural in $T$. The cube $X^{S\subset\bullet}$ is a pushout cube of
cofibrations. For each $i\in \underline s-\lbrace a\rbrace$ the map
$X^S\rightarrow X^{S\cup\lbrace i\rbrace}$ is $(n-q_i-1)$-connected
because it is obtained by pushout from $A_S^S\rightarrow
A_S^{S\cup\lbrace i\rbrace}$ and ultimately from
$\partial_1Q_i\rightarrow Q_i$. Therefore the cube is $1+\Sigma_{i\neq
a}(n-q_i-2)$-cartesian by Theorem \ref{Blakers-Massey-n-ad}.  Since $Q_a$ is
$q_a$-dimensional relative to $\partial_0 Q_a$, the cube
$\Map(Q_a,X^T)$ is $k$-cartesian for $k=-q_a+1+\Sigma_{i\neq
a}(n-q_i-2)=3-n+\Sigma_{i\in\underline s}(n-q_i-2)$.

Case 3 is an almost immediate consequence of Theorem \ref{pushforward-analyticity}. Apply the latter with $A_{\bullet}=A_{\empty}^S$ and $d=n$. This cube $\cal I_{\empty}^{\bullet}$ that we have to deal with is closely related to the cube which is called $R_{\bullet}(n)$ after the statement of \ref{pushforward-analyticity}.
In fact, $\cal I_{\empty}^S$ is the portion of $R_S$
defined by a duality condition. As at the end of section 5, we use \ref{poincare-gluing} to see that an object of $R_{\empty}$ must satisfy the duality condition if its image in every $R_{\lbrace i\rbrace}$ does so, and then invoke Prop. \ref{zero-one} to finish the job.
\end{proof}

\section{\label{dim and conn} Appendix A: Dimension and Connectivity}

This is just a review of some conventions and basic facts.

A space $X$ is called $k$-connected if for every integer $j$ with $-1\leq j\leq k$ every map $S^j\rightarrow X$ can be extended to a map $D^{j+1}\rightarrow X$. If $0\leq k$ then this means that $X$ has exactly one path-component and has trivial $\pi_j$ for $1\leq j\leq k$. If $k=-1$ then it means that $X$ is not empty. If $k\leq -2$ then every space is $k$-connected.

A map $f\:X\rightarrow Y$ of spaces is called $k$-connected if for every point in $Y$ the homotopy fiber of $f$ is $(k-1)$-connected. This is the same as saying that for every basepoint in $X$ the induced map $\pi_j(X)\rightarrow \pi_j(Y)$ is surjective for $1\leq j\leq k$ and injective for $1\leq j<k$, and also that $\pi_0(X)\rightarrow \pi_0(Y)$ is surjective if $0\leq k$ and injective if $0<k$.

If $h=f\circ g$ then the following three implications hold: The map $h$ is $k$-connected if both $f$ and $g$ are $k$-connected. The map $f$ is $k$-connected if $h$ is $k$-connected and $g$ is $(k-1)$-connected. The map $g$ is $k$-connected if $h$ is $k$-connected and $f$ is $(k+1)$-connected. In particular, the property of $k$-connectedness of maps is homotopy-invariant in the sense that if the vertical maps in a square diagram are weak equivalences and one of the horizontal maps is $k$-connected then the other horizontal map is also $k$-connected.

We say that a map $f\:X\rightarrow Y$ of spaces is homotopically $d$-dimensional, or that its homotopy dimension is at most $d$, if in the weak homotopy category of spaces under $X$ it is a retract of a relative CW complex of dimension at most $d$. This property of maps is again homotopy-invariant.

A cofibration (for example a relative CW complex) has homotopical dimension $\leq d$ if and only if has the left lifting property with respect to every $d$-connected fibration. A fibration is $k$-connected if and only  if it has the right lifting property with respect to every homotopically $k$-dimensional cofibration (or relative CW complex).

If the cofibration $A\rightarrow X$ is $d$-dimensional and the fibration $Y\rightarrow B$ is $k$-connected then the space of solutions $X\rightarrow Y$ of the lifting/extension problem
$$
\xymatrix {
A \ar[r]\ar[d] & Y\ar[d]\\
X \ar[r] & B
}
$$
is $(k-d-1)$-connected. (In particular when $k\geq d$ the space of solutions is nonempty.) We leave the proof to the reader, except for the following comment: For the most part the proof can be carried out by obstruction theory, using cohomology of $A\rightarrow X$ with coefficients in local systems defined by homotopy groups of the fibers of $Y\rightarrow B$, but special arguments are needed if the fibers have nontrivial $\pi_0$ or $\pi_1$.

Of course, homotopically $d$-dimensional implies cohomologically $d$-dimensional, meaning the vanishing of relative cohomology in dimensions $>d$ for all coefficient systems on the codomain. Conversely, we have

\begin{prop}\label{coh-htpy}If $d\geq 2$ then a cohomologically $d$-dimensional map is homotopically $d$-dimensional.
\end{prop}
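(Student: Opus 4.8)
The plan is to reduce at once to the case in which $A\to X$ is a relative CW complex, and then to show by elementary obstruction theory that it has the left lifting property with respect to every $d$-connected fibration; by the characterization of homotopical dimension for cofibrations recalled in Appendix A, this is exactly what is needed. The reduction is immediate: both the hypothesis and the conclusion are invariant under weak equivalence under $A$ — relative cohomology with arbitrary local coefficients is a weak homotopy invariant, and homotopical $d$-dimensionality is homotopy invariant by Appendix A — so I may factor $A\to X$ as a relative CW complex followed by a weak equivalence and replace $X$ by the intermediate space. Thus assume $A=X^{(-1)}\subset X^{(0)}\subset\dots$ is a relative CW filtration with $X=\colim X^{(n)}$.

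Now let $p\colon E\to B$ be a $d$-connected fibration and consider the lifting/extension problem posed by maps $A\to E$ and $X\to B$ whose restrictions along $A$ agree under $p$; the task is to produce a lift $X\to E$. The fibers $F$ of $p$ are $(d-1)$-connected, and this is the one place where the hypothesis $d\geq 2$ is used in an essential way: $d-1\geq 1$, so every $F$ is path-connected and \emph{simply connected}. Hence the lifting problem is governed by ordinary obstruction theory with local coefficients, free of the $\pi_0$- and $\pi_1$-difficulties flagged in Appendix A. I would build the lift by induction over the skeleta: given it on $X^{(n-1)}$, agreeing with $A\to E$ and lying over $X\to B$, the obstruction to extending over the $n$-cells is a cocycle whose class lies in $H^n(X,A;\pi_{n-1}F)$, where $\pi_{n-1}F$ is the coefficient system on $X$ pulled back from the action of the fundamental groupoid of $B$. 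For $n\leq d$ this group vanishes because $\pi_{n-1}F=0$, and for $n>d$ it vanishes because $A\to X$ is cohomologically $d$-dimensional. So the obstruction vanishes in every dimension: after altering the partial lift on $X^{(n-1)}$ by a cochain cobounding the obstruction cocycle — an alteration that is vertical over $B$ and rel $X^{(n-2)}$, hence rel $A$, and so disturbs neither the boundary condition nor the earlier skeleta — the lift extends over $X^{(n)}$. Passing to the colimit over $n$ produces the desired lift $X\to E$, so $A\to X$ has the left lifting property with respect to every $d$-connected fibration and is therefore homotopically $d$-dimensional.

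I do not expect any genuinely hard step once the reduction is in place; the essential point, and the reason $d\geq 2$ is assumed, is simply that a $d$-connected fibration has simply connected fibers, which is precisely what allows the naive obstruction theory to run. The only care needed beyond that is the standard bookkeeping: identifying the obstruction groups with relative cohomology of $(X,A)$ with local coefficient systems, and checking that the skeletal corrections respect the data along $A$ and over $B$. Both are routine.
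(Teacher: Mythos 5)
Your proof is correct, and it is essentially the same argument as the paper's: both run the same obstruction theory, using the $(d-1)$-connectivity of the fibers of a $d$-connected fibration (hence, since $d\geq 2$, their simple connectivity, which is what lets ordinary obstruction theory with local coefficients apply) to kill obstructions in dimensions $\leq d$, and the cohomological $d$-dimensionality hypothesis to kill them in dimensions $>d$. The only packaging difference is that you verify the left lifting property against \emph{all} $d$-connected fibrations and then appeal to the lifting-property characterization of homotopical dimension from Appendix A, whereas the paper factors the given map as (homotopically $d$-dimensional cofibration)$\circ$($d$-connected fibration) and applies the identical obstruction computation just once, to produce a section and hence exhibit the target as a retract of the middle space rel the source.
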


\proof We may assume the map is a cofibration $A\rightarrow B$. Factor it $A\rightarrow Y\rightarrow B$ as a homotopically $d$-dimensional cofibration followed by a $d$-connected fibration. A section of the fibration fixed on $A$ will establish $B$ as a retract of $Y$ relative to $A$. To produce the section, note that the fibers of the fibration have trivial $\pi_0$ and $\pi_1$ (because $d\geq 2$), and use obstruction theory.
\endproof

\section{\label{cubes} Appendix B: Cubical Diagrams}

We make conventions concerning cubical diagrams (\lq cubes\rq) and recall the basic results that are needed. Most of this is in \cite{Goodwillie_CALC2}, where more details and proofs can be found, although some things are said a little differently there.

Let $S$ be a set with $n$ elements. Frequently $S$ will be $\underline n =\lbrace 1\dots n\rbrace$. Let $\mathcal P(S)$ be the poset of all subsets of $S$. A functor $\mathcal P(S)\rightarrow \cal T$ is called an $S$-cube, or $n$-cube, or $n$-dimensional cube, of spaces. On occasion we also refer to a functor as an $S$-cube or $n$-cube when its domain is some poset isomorphic to $\mathcal P(S)$, for example $\mathcal P(S)^{op}$. We may informally refer to a cube $X$ by its collection of spaces $\lbrace X(T)|T\subset S\rbrace$ when the maps between them are understood.

An $n$-cube $X$ has various \it faces\rm\ $\partial_U^T X$, which are cubes whose dimensions range from $0$ to $n$. Here $(T,U)$ is a pair of sets satisfying $U\subset T\subset S$ and $\partial_U^T X$ is a $(T-U)$-cube, the restriction of $X$ to the poset of sets containing $U$ and contained in $T$. The most important are the \it front faces\rm\ $\partial^TX=\partial_{\emptyset}^TX$ and the \it back faces\rm\ $\partial_TX=\partial_T^SX$.

Let $\mathcal P_0(S)\subset \mathcal P(S)$ be the subposet of nonempty subsets of $S$. An $n$-cube $X$ determines a map
$$
a(X)\: X(\emptyset )\rightarrow \holim (X|_{\mathcal P_0(S)})
$$
from the \lq first\rq\ space  to the homotopy limit of the restriction to $\mathcal P_0(S)$.  Dually, if $\mathcal P_1(S)$ is the poset of proper subsets of $S$ then we have the canonical map
$$
b(X)\:\hocolim (X|_{\mathcal P_1(S))}\rightarrow X(S).
$$

The $n$-cubes are the objects of a category, with natural maps as the morphisms. A map $X\rightarrow Y$ of cubes is called a weak equivalence if it is an objectwise weak equivalence, in other words if for every $T$ the map $X(T)\rightarrow Y(T)$ is a weak equivalence.

The category of $S$-cubes becomes a model category if we declare that a fibration of cubes is an objectwise fibration. In this \it projective\rm\ model structure a map $X\rightarrow Y$ is a cofibration if and only if for every subset $T$ of $S$ the diagram
$$
\xymatrix{ \colim (X|_{\mathcal P_1(T))}\ar[r]\ar[d] & X(T)\ar[d]\\
\colim (Y|_{\mathcal P_1(T))}\ar[r] &  Y(T)
}
$$
induces a cofibration from the pushout to $Y(T)$.

Recall that an $(r+1)$-ad consists of a space $X$ and subspaces $X_1,\dots ,X_r$. An $(r+1)$-ad determines an $r$-cube of spaces, consisting of the intersections $\cap_{t\in T} X_t$ (with the convention that when $T$ is empty the intersection is $X$). In this paper the $(r+1)$-ads that are used will generally be CW, in the sense that $X$ is a CW complex and each $X_i$ is a subcomplex. Every $r$-cube is equivalent to a CW $(r+1)$-ad. (We could just as well have said \lq cellular\rq\ instead of \lq CW\rq, meaning that cells are not required to be attached in order of dimension and that retracts are allowed. Cellular $(r+1)$-ads correspond precisely to projectively cofibrant $r$-cubes.)

In the alternative \it injective\rm\ model structure the cofibrations are defined objectwise and the fibrations are the maps such that for every $T$ the diagram
$$
\xymatrix{
X(T)\ar[r]\ar[d] & \lim(X|_{\mathcal P_0(S-T))}\ar[d]\\
Y(T)\ar[r]& \lim(Y|_{\mathcal P_0(S-T))}
}
$$
induces a fibration from $X(T)$ to the pullback.

The cube $X$ is called $k$-cartesian if the map $a(X)$ is $k$-connected. It is called $k$-cocartesian if the map $b(X)$ is $k$-connected. Thus for a $1$-cube (= map) $k$-cartesian and $k$-cocartesian both mean $k$-connected, but for $k\geq 2$ they mean two different things. They may be thought of as two different notions of connectivity of a cube.

A cube that is weakly equivalent to a $k$-[co]cartesian cube is again $k$-[co]cartesian.

An $(r+1)$-cube can be regarded as a map $X\rightarrow Y$ of $r$-cubes. The following simple rules (see page 303-305 of \cite{Goodwillie_CALC2}) apply:

\begin{prop} \label{simple_rule_1}
$X$ is $k$-cartesian if both $Y$ and $X\rightarrow Y$ are $k$-cartesian. $X\rightarrow Y$ is $k$-cartesian if $X$ is $k$-cartesian and and $Y$ is $(k+1)$-cartesian.
(Warning: $Y$ need not be $k$-cartesian even if $X$ and $X\rightarrow Y$ are ${\infty}$-cartesian, as shown by trivial examples involving the empty space.)
\end{prop}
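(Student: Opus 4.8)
The plan is to reduce both assertions to the corresponding statements about ordinary $2\times 2$ squares, via an identification of the homotopy limit of a punctured $(r+1)$-cube, and then to derive the square case from the base-change property of homotopy pullbacks together with the composition rules for $k$-connected maps recorded in Appendix A. (This is the sort of bookkeeping carried out in \cite{Goodwillie_CALC2}; I sketch how I would organize it.)

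First I would set up the reduction. Write the $(r+1)$-cube $Z$ as a map $f\colon X\to Y$ of $r$-cubes, where $X$ is the restriction of $Z$ to the subsets of $\underline{r+1}$ not containing $r+1$ and $Y$ is its restriction (reindexed by $T\mapsto T\cup\{r+1\}$) to the subsets that do. Abbreviate $PX=\holim_{\mathcal P_0(\underline r)}X$ and $PY=\holim_{\mathcal P_0(\underline r)}Y$, so that $a(X)\colon X(\emptyset)\to PX$ and $a(Y)\colon Y(\emptyset)\to PY$ are the canonical maps and, for any $r\geq 0$, $X$ (resp.\ $Y$) is $k$-cartesian exactly when $a(X)$ (resp.\ $a(Y)$) is $k$-connected. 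Splitting the punctured poset $\mathcal P_0(\underline{r+1})$ into the sieve of nonempty subsets avoiding $r+1$ (on which $Z$ restricts to the \emph{punctured} cube $X$, with homotopy limit $PX$) and the complementary cosieve of subsets containing $r+1$ (on which $Z$ restricts to the \emph{full} cube $Y$, a diagram whose indexing poset has an initial object, hence with homotopy limit $Y(\emptyset)$), the Fubini theorem for homotopy limits yields a natural weak equivalence $\holim_{\mathcal P_0(\underline{r+1})}Z\simeq PX\times^h_{PY}Y(\emptyset)$, compatible with the canonical maps out of $Z(\emptyset)=X(\emptyset)$. Under this identification $Z$ is $k$-cartesian precisely when the commutative square with top edge $a(X)$, bottom edge $a(Y)$ and vertical edges induced by $f$ is $k$-cartesian in the usual sense. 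So it suffices to prove, for any commutative square of spaces $(A\to B\,;\,C\to D)$: (i) if $C\to D$ is $k$-connected and the square is $k$-cartesian then $A\to B$ is $k$-connected; and (ii) if $A\to B$ is $k$-connected and $C\to D$ is $(k+1)$-connected then the square is $k$-cartesian.

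For the square case I would use that the projection $p\colon B\times^h_D C\to B$ from the homotopy pullback is a fibration whose fiber over $b$ is the homotopy fiber of $C\to D$ over the image of $b$; hence $p$ is $j$-connected whenever $C\to D$ is $j$-connected (the $\pi_0$-statements needed for $p$ hold once $j\geq 0$ and are vacuous otherwise). Factoring $A\to B$ as $A\to B\times^h_D C\xrightarrow{p}B$, the square is $k$-cartesian exactly when the first factor is $k$-connected. For (i), the first factor is $k$-connected by hypothesis and $p$ is $k$-connected, so the composite $A\to B$ is $k$-connected. For (ii), the composite $A\to B$ is $k$-connected and $p$ is $(k+1)$-connected, so by the Appendix~A rule ``$g$ is $k$-connected if $h=f\circ g$ is $k$-connected and $f$ is $(k+1)$-connected'' the first factor $A\to B\times^h_D C$ is $k$-connected, i.e.\ the square is $k$-cartesian. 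The warning is illustrated already for $2$-cubes by the square $(\emptyset\to\emptyset\,;\,\emptyset\to *)$, whose homotopy pullback is $\emptyset$: here the top edge $X$ and the square $Z$ are $\infty$-cartesian while the bottom edge $Y$ is only $(-1)$-cartesian; for a general $(r+1)$-cube one may take $X$ to be the $r$-cube that is empty in every position (so $PX=\emptyset$ and $a(X)$ is an identity map of empty spaces) and $Y$ any $r$-cube that is not $\infty$-cartesian, whereupon $\holim_{\mathcal P_0(\underline{r+1})}Z=\emptyset=Z(\emptyset)$ forces $Z$ to be $\infty$-cartesian.

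The main obstacle is the Fubini identification $\holim_{\mathcal P_0(\underline{r+1})}Z\simeq PX\times^h_{PY}Y(\emptyset)$: although routine once the sieve/cosieve decomposition is in place, it requires care that the cosieve of subsets containing $r+1$ contributes the homotopy limit of a cube \emph{with} initial object, namely $Y(\emptyset)$, rather than a punctured homotopy limit, and that the ``interface'' homotopy limit over the arrows from the sieve into the cosieve is cofinal with the punctured homotopy limit $PY$. Apart from this the only delicate points are the $\pi_0$-level and empty-space cases in (i) and (ii); these are precisely what the conventions of Appendix~A for $k$-connectedness of maps are designed to absorb, and they are the source of the stated warning.
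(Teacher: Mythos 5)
Your argument is correct. The paper itself gives no proof of this proposition --- it simply cites pages 303--305 of \cite{Goodwillie_CALC2} --- and the route you take (identify $\holim_{\mathcal P_0(\underline{r+1})}Z$ with the homotopy pullback of $a(Y)$ along $PX\to PY$, so that ``$Z$ is $k$-cartesian'' becomes ``the square with edges $a(X)$, $a(Y)$ is $k$-cartesian,'' then factor the top edge through $B\times^h_D C\to B$ and quote the three composition rules for $k$-connectedness from Appendix~A) is essentially the argument in that reference. The Fubini-style decomposition of the punctured $(r+1)$-cube into a sieve (giving $PX$) and a cosieve with initial object $\{r+1\}$ (giving $Y(\emptyset)$) is the right way to set this up, and your counterexample $(\emptyset\to\emptyset\,;\,\emptyset\to *)$ for the warning is exactly the ``trivial example involving the empty space'' the proposition alludes to.
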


\begin{prop}
$Y$ is $k$-cocartesian if both $X$ and $X\rightarrow Y$ are $k$-cocartesian. $X\rightarrow Y$ is $k$-cocartesian if $Y$ is $k$-cocartesian and $X$ is $(k-1)$-cocartesian. (Warning: $X$ need not be $k$-cocartesian even if $Y$ and $X\rightarrow Y$ are ${\infty}$-cocartesian, as shown by simple examples involving nonabelian fundamental groups.)
\end{prop}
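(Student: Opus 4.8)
The plan is to deduce both assertions from a single structural fact, formally dual to the one that proves Proposition~\ref{simple_rule_1}. Write the $(r+1)$-cube as $Z\colon X\to Y$, where $X$ is the face of $Z$ spanned by the subsets of $\underline{r+1}$ not containing the element $r+1$ and $Y$ the face spanned by those that do; then $Z(\emptyset)=X(\emptyset)$ is the initial vertex, $Z(\underline{r+1})=Y(\underline r)$ the terminal vertex, and ``$X$ (resp.\ $Y$, resp.\ $X\to Y$) is $k$-cocartesian'' means exactly that $b(X)$ (resp.\ $b(Y)$, resp.\ $b(Z)$) is $k$-connected. Put $x=X(\underline r)$, $V=\hocolim_{\mathcal P_1(\underline r)}X$, and $W=\hocolim_{\mathcal P_1(\underline r)}Y$. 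Partitioning $\mathcal P_1(\underline{r+1})$ into the subsets avoiding $r+1$ (a copy of $\mathcal P(\underline r)$, whose terminal object $\underline r$ makes its hocolim canonically $x$) and those containing $r+1$ (a copy of $\mathcal P_1(\underline r)$, with hocolim $W$) exhibits a natural homotopy pushout square
$$
\xymatrix{
V \ar[r]\ar[d]_{b(X)} & W \ar[d]^{\iota} \\
x \ar[r] & P
}
$$
with $P:=\hocolim_{\mathcal P_1(\underline{r+1})}Z$, left edge $b(X)$, top edge the hocolim of $X\to Y$ over $\mathcal P_1(\underline r)$, and $b(Z)\colon P\to Y(\underline r)$ satisfying $b(Z)\circ\iota=b(Y)$. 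I would verify this square by the usual bar-construction analysis of a hocolim indexed by a poset that splits into two full subposets with all cross-morphisms pointing from the first to the second; this is the only step requiring genuine work. (It is the exact dual of the homotopy pullback square $L\to Y(\emptyset)$, $L\to\holim_{\mathcal P_0(\underline r)}X$, with $L=\holim_{\mathcal P_0(\underline{r+1})}Z$, on which the proof of Proposition~\ref{simple_rule_1} rests.)

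Granting the square, the first assertion is immediate: if $b(X)$ and $b(Z)$ are $k$-connected, then $\iota$, being a cobase change of $b(X)$, is $k$-connected (the class of $k$-connected maps is closed under cobase change), so $b(Y)=b(Z)\circ\iota$ is a composite of $k$-connected maps and hence $k$-connected; that is, $Y$ is $k$-cocartesian.

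For the second assertion, suppose $b(Y)$ is $k$-connected and $b(X)$ is $(k-1)$-connected. Then $\iota$ is $(k-1)$-connected, again as a cobase change of $b(X)$. In the factorization $b(Y)=b(Z)\circ\iota$, the map of homotopy fibers $\hofiber(b(Y))\to\hofiber(b(Z))$ induced by $\iota$ has, over every point, homotopy fiber a homotopy fiber of $\iota$, hence is $(k-1)$-connected; and $\hofiber(b(Y))$ is itself $(k-1)$-connected because $b(Y)$ is $k$-connected. Since a $(k-1)$-connected map out of a $(k-1)$-connected space has $(k-1)$-connected target (long exact sequence of the fibration), $\hofiber(b(Z))$ is $(k-1)$-connected, so $b(Z)$ is $k$-connected and $X\to Y$ is $k$-cocartesian. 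This last step is exactly where the naive dual of the proof of Proposition~\ref{simple_rule_1} breaks down: for the cartesian statement one cancels a comparison map that is \emph{one degree better connected} than the conclusion, whereas here $\iota$ is only $(k-1)$-connected, which forces the passage to homotopy fibers.

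Finally, no argument is needed for the parenthetical warning: a cobase change of a map can be an equivalence without the map itself being one. Taking $V$ to be an acyclic space with nontrivial (hence perfect, hence nonabelian) fundamental group, the homotopy pushout square with $V$ at one corner and contractible spaces at the other three (the pushout $\ast\cup_V^h\ast=\Sigma V$ being contractible, as $V$ is acyclic and connected) has $Y$ and $X\to Y$ both $\infty$-cocartesian while $X=(V\to\ast)$ is only $1$-cocartesian. Thus the only real content of the proposition is the structural homotopy pushout square of the first paragraph; the remainder is a routine combination of the composition/cancellation rules of Appendix~A with the stability of $k$-connectedness under cobase change, precisely as on pp.~303--305 of \cite{Goodwillie_CALC2}.
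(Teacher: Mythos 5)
Your argument is correct, and it is exactly the standard one that the paper delegates to \cite{Goodwillie_CALC2} (pp.\ 303--305): the proposition is stated there without proof, and the cited proof rests on precisely your homotopy pushout square $P\simeq x\cup^h_V W$ together with the composition/cancellation rules for connectivity and the invariance of connectivity under homotopy cobase change. The only stylistic remark is that your fiber-chasing in the second assertion merely re-derives the cancellation rule already recorded in Appendix A ($f$ is $k$-connected if $f\circ g$ is $k$-connected and $g$ is $(k-1)$-connected), so once $\iota$ is known to be $(k-1)$-connected that step is immediate; the acyclic-space counterexample for the warning is also the intended one.
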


Let $X\rightarrow Y$ be a map of $r$-cubes. If $Y$ is made into a cube of based spaces by choosing a basepoint in $Y(\emptyset)$, then the homotopy fibers of the maps $X(T)\rightarrow Y(T)$ constitute  another $r$-cube $Z_y$, which can be called the (objectwise) homotopy fiber. Thus
$$
Z_y(T) = \text{\rm hofiber}(X(T)\rightarrow Y(T)).
$$

\begin{prop} \label{fiber-cube-connectivity}
If $X\rightarrow Y$, considered as an $(r+1)$-cube, is $k$-cartesian, then $Z_y$ is also $k$-cartesian. The converse holds: $X\rightarrow Y$ is $k$-cartesian if $Z_y$ is $k$-cartesian for every $y\in Y(\emptyset)$. Warning: It is important to consider all possible points $y$ (or at least one point in every path-component of $Y(\emptyset)$) in applying this test.
\end{prop}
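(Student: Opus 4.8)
The plan is to reduce the general assertion to the case $r=1$ (squares) by decomposing the homotopy limit over a punctured $(r+1)$-cube and using repeatedly that homotopy fibers commute with homotopy limits.

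First I would settle the case $r=1$ directly. For a commutative square of spaces with corners $A,B,C,D$ (maps $A\to B$, $A\to C$, $B\to D$, $C\to D$), being $k$-cartesian means that the comparison map $A\to B\times_D^h C$ to the homotopy pullback is $k$-connected. Both $A$ and $B\times_D^h C$ lie over $C$, compatibly, so this comparison is $k$-connected iff it induces a $k$-connected map on the homotopy fiber over every $y\in C$, equivalently over one $y$ in each path-component of $C$; and those homotopy fibers are $\hofiber(A\to C;y)$ and $\hofiber(B\to D;\bar y)$ (with $\bar y$ the image of $y$), the comparison inducing the evident natural map between them. When the square is a map $X\to Y$ of $1$-cubes this is precisely the statement that the $1$-cube $Z_y$ is $k$-cartesian for every $y\in Y(\emptyset)=C$, and it makes transparent why all path-components of $Y(\emptyset)$ must be probed---the content of the warning.

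For general $r$, regard the $(r+1)$-cube $W$ determined by a map $f\colon X\to Y$ of $r$-cubes as a functor on $\mathcal P(\underline{r+1})$, with $W(T)=X(T)$ for $T\subset\underline r$ and $W(T\cup\{r+1\})=Y(T)$. Partitioning the nonempty subsets of $\underline{r+1}$ according to whether they contain $r+1$ yields a natural equivalence
$$
\holim_{\emptyset\neq S\subset\underline{r+1}}W(S)\;\simeq\;\holim\Bigl(\holim_{\emptyset\neq T\subset\underline r}X(T)\xrightarrow{\ \holim f\ }\holim_{\emptyset\neq T\subset\underline r}Y(T)\xleftarrow{\ a(Y)\ }Y(\emptyset)\Bigr),
$$
under which the canonical map $a(W)$ is carried to the map $X(\emptyset)\to(\text{homotopy pullback})$ with components $a(X)$ and $f$. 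Hence $W$ is $k$-cartesian iff the square
$$
\xymatrix{
X(\emptyset)\ar[r]^-{a(X)}\ar[d]_-{f} & \holim_{\emptyset\neq T\subset\underline r}X(T)\ar[d]\\
Y(\emptyset)\ar[r]^-{a(Y)} & \holim_{\emptyset\neq T\subset\underline r}Y(T)
}
$$
is $k$-cartesian. By the $r=1$ case this holds iff for every $y\in Y(\emptyset)$ the induced map of vertical homotopy fibers $\hofiber(f;y)\to\hofiber\bigl(\holim_{\emptyset\neq T}X(T)\to\holim_{\emptyset\neq T}Y(T)\bigr)$ is $k$-connected. Finally, homotopy fibers commute with homotopy limits, so the source of this map is $Z_y(\emptyset)$, its target is $\holim_{\emptyset\neq T\subset\underline r}Z_y(T)$, and the map itself is $a(Z_y)$; thus the condition says exactly that $Z_y$ is $k$-cartesian for every $y$. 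Both directions of the Proposition come out of this chain of equivalences at once.

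I expect the main obstacle to be the displayed natural equivalence: one must verify that the reindexing of the punctured $(r+1)$-cube genuinely yields that homotopy pullback, naturally in $W$, and that $a(W)$ corresponds to the map with components $a(X)$ and $f$---a routine but somewhat fiddly cofinality/decomposition computation for homotopy limits of the kind developed in \cite{Goodwillie_CALC2}. The only other delicate point, already isolated in the $r=1$ step, is the $\pi_0$ bookkeeping: $k$-cartesianness cannot be read off from a single $Z_y$ but needs $y$ to meet every path-component of $Y(\emptyset)$.
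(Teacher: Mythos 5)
Your argument is correct. Note, though, that the paper does not actually prove this proposition: Appendix~B states it without proof and refers the reader to \cite{Goodwillie_CALC2}, so there is nothing in the text to compare against. The route you take---rewrite the homotopy limit over the punctured $(r+1)$-cube as the homotopy pullback of $a(Y)$ and $\holim_{\mathcal P_0(\underline r)}f$, reduce to the square case, and commute homotopy fibers past homotopy limits to recognize $a(Z_y)$---is the natural one and is essentially how such facts are handled in \cite{Goodwillie_CALC2}. The displayed decomposition is indeed the point that wants spelling out; it can be obtained by a homotopy Kan extension over the cospan poset, using that $\lbrace r+1\rbrace$ is initial among the subsets of $\underline{r+1}$ containing $r+1$ and that the nonempty subsets not containing $r+1$ form a homotopy-initial subposet among the nonempty subsets other than $\lbrace r+1\rbrace$; after that one still has a zigzag to chase to match $a(W)$ with the map built from $a(X)$ and $f$, which is the naturality you rightly flagged as fiddly. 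When you commute $\hofiber$ past $\holim$, the basepoint of $\holim_{\mathcal P_0(\underline r)}Y$ must be taken to be $a(Y)(y)$, whose component in each $Y(T)$ is the image of $y$; with that choice the target really is $\holim_{\mathcal P_0(\underline r)}Z_y$ and the map really is $a(Z_y)$, by naturality. Your isolation of the $\pi_0$ issue in the $r=1$ base case---that every path-component of $Y(\emptyset)$ must be probed, because $k$-connectedness of a map over $Y(\emptyset)$ is detected fiberwise only in that generality---is exactly right and is the content of the warning.
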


We single out a trivial but useful  case of \ref{fiber-cube-connectivity}:

\begin{prop} \label{zero-one}
Let $X\rightarrow Y$ be a map of $r$-cubes such that for each $T$ the map $X(T)\rightarrow Y(T)$ is the inclusion of an open and closed subset. Then the associated $(r+1)$-cube is $\infty$-cartesian as long as the following holds: for every point $y\in Y(\emptyset)$, $y$ must belong to $X(\emptyset)$ if for every singleton $T=\lbrace t\rbrace$ the image of $y$ in $Y(T)$ belongs to $X(T)$. In particular, in this case if $Y$ is $k$-cartesian then $X$ is $k$-cartesian.
\end{prop}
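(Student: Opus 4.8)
The plan is to deduce this from Proposition \ref{fiber-cube-connectivity}. Fix a point $y\in Y(\emptyset)$, write $y_T$ for its image in $Y(T)$, and let $Z_y$ denote the associated $r$-cube of homotopy fibers, so $Z_y(T)=\hofiber(X(T)\rightarrow Y(T))$ taken over $y_T$. Since each $X(T)\rightarrow Y(T)$ is the inclusion of an open and closed subset, hence (up to weak equivalence) of a union of path components, this homotopy fiber is contractible when $y_T\in X(T)$ and empty when $y_T\notin X(T)$. The task is thus to show that for every $y$ the cube $Z_y$ is $\infty$-cartesian, i.e.\ that the canonical map $Z_y(\emptyset)\rightarrow\holim_{T\neq\emptyset}Z_y(T)$ is a weak equivalence; granting this, Proposition \ref{fiber-cube-connectivity} gives that the $(r+1)$-cube is $\infty$-cartesian.

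First I would record one elementary remark: because $X$ is a subcube of $Y$ (the inclusions $X(T)\subset Y(T)$ are compatible with the structure maps), the condition ``$y_{\lbrace t\rbrace}\in X(\lbrace t\rbrace)$ for every singleton'' is equivalent to ``$y_T\in X(T)$ for every nonempty $T$'' — the forward implication by applying the maps $X(\lbrace t\rbrace)\rightarrow X(T)$ for $t\in T$, the reverse being trivial — and likewise ``$y\in X(\emptyset)$'' forces $y_T\in X(T)$ for all $T$. Then there are just two cases. If $y_{\lbrace t\rbrace}\in X(\lbrace t\rbrace)$ for every singleton, then every $Z_y(T)$ with $T\neq\emptyset$ is contractible and every map between these is a weak equivalence, so $\holim_{T\neq\emptyset}Z_y(T)$ is contractible; meanwhile the hypothesis of the proposition yields $y\in X(\emptyset)$, so $Z_y(\emptyset)$ is contractible too, and the canonical map is a weak equivalence of contractible spaces. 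If instead $y_{\lbrace t_0\rbrace}\notin X(\lbrace t_0\rbrace)$ for some $t_0$, then $Z_y(\lbrace t_0\rbrace)=\emptyset$; since a homotopy limit admits a map to each of its terms, $\holim_{T\neq\emptyset}Z_y(T)=\emptyset$; and by the remark above $y\notin X(\emptyset)$, so $Z_y(\emptyset)=\emptyset$ as well, and $\emptyset\rightarrow\emptyset$ is a weak equivalence. In both cases $Z_y$ is $\infty$-cartesian.

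For the final clause, if $Y$ is moreover $k$-cartesian then, the $(r+1)$-cube $X\rightarrow Y$ being $\infty$-cartesian and hence $k$-cartesian, Proposition \ref{simple_rule_1} gives that $X$ is $k$-cartesian. There is no real obstacle here; this is a bookkeeping argument. The only points that call for a word of care are the identification of the homotopy fiber of a component inclusion (contractible or empty), the fact that a homotopy limit one of whose values is empty is itself empty, and the observation that the one-directional hypothesis of the proposition is precisely what is needed because the reverse implication holds automatically for a subcube.
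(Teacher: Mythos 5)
Your proof is correct and follows essentially the same route as the paper's: pass to the cube of homotopy fibers $Z_y$, observe that each entry is either empty or weakly contractible, and check that the stated hypothesis forces $a(Z_y)$ to be a map of contractible spaces or a map of empty spaces. The only difference is cosmetic — you spell out the two cases and the subcube observation more explicitly and add the (trivial) deduction of the final clause from \ref{simple_rule_1}.
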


\begin{proof} In the cube $Z_y$ of homotopy fibers, each space $Z_y(T)$ is either empty or weakly contractible. If $Z_y(\emptyset)$ is nonempty then of course the others are, too, and the map $a(Z_y)$ is a map between contractible spaces. The hypothesis insures that if $Z_y(\emptyset)$ is empty then for some singleton $T$ the space $Z_y(T)$ is empty, too, so that $a(Z_y)$ is a map between empty spaces.
\end{proof}

The next two results, from section 2 of \cite{Goodwillie_CALC2}, concern the interplay between cartesian and cocartesian. We sometimes refer to them as higher Blakers-Massey theorems.

\begin{thm}\label{Blakers-Massey-n-ad}
Let $X\: \mathcal P(S)\rightarrow \cal T$ be an $r$-cube of spaces. Assume that for each nonempty subset $T\subset S$ there is given an integer (or $+\infty$) $k_T$ such that the front face $\partial^TX$ is $k_T$-cocartesian. Assume also the monotonicity condition: $k_U\leq k_T$ when $U\subset T$. Then the cube $X$ is $k$-cartesian where $k+r-1$ is the minimum, over all partitions $S=\coprod_{\alpha}T_{\alpha}$ of $S$, of the sum $\Sigma_{\alpha}k_{\alpha}$.
\end{thm}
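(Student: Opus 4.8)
The plan is to prove this by induction on $r$, the dimension of the cube. Recall that $X$ is $k$-cartesian exactly when $a(X)\colon X(\emptyset)\to\holim(X|_{\mathcal P_0(\underline r)})$ is $k$-connected, and that (Proposition \ref{simple_rule_1} and Proposition \ref{fiber-cube-connectivity}) the cartesianness of an $r$-cube, when it is viewed as a map of $(r-1)$-cubes, can be read off from the cartesianness of the two $(r-1)$-cubes or of the $(r-1)$-cube of homotopy fibers. The base case $r=1$ is vacuous: a $1$-cube is a map, \lq $k$-cartesian\rq\ and \lq $k$-cocartesian\rq\ both mean \lq $k$-connected\rq, and the only partition of $\underline 1$ gives $k=k_{\{1\}}$. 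The case $r=2$ is a mild sharpening of the classical Blakers--Massey theorem: replace $X(\{1,2\})$ by $\hocolim(X(\{1\})\leftarrow X(\emptyset)\to X(\{2\}))$ to get an $\infty$-cocartesian square, which classical Blakers--Massey makes $(k_{\{1\}}+k_{\{2\}}-1)$-cartesian; then compare homotopy limits over the two terminal vertices, using that the structural map $b(X)$ between them is $k_{\{1,2\}}$-connected (this is exactly what $k_{\{1,2\}}$-cocartesianness of $X$ says), to conclude that $X$ is $\min(k_{\{1\}}+k_{\{2\}}-1,\,k_{\{1,2\}}-1)$-cartesian, which is the asserted bound since $k+1=\min(k_{\{1\}}+k_{\{2\}},\,k_{\{1,2\}})$.

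For the inductive step I would view the $r$-cube $X$ as a map $g\colon X'\to Y$ of $(r-1)$-cubes, with $X'=\partial^{\underline{r-1}}X$ (so $X'(T)=X(T)$ for $T\subseteq\underline{r-1}$) and $Y(T)=X(T\cup\{r\})$. The front faces of $X'$ are among the front faces of $X$, so the inductive hypothesis applies to $X'$ directly and bounds how cartesian it is in terms of partitions of $\underline{r-1}$. The real work is to produce a usable cocartesianness statement for the target: the front faces of $Y$ are \emph{not} front faces of $X$, so one must first replace $Y$, up to objectwise weak equivalence, by a cube $\widehat Y$ assembled from iterated homotopy pushouts of the appropriate faces of $X$, arranging along the way that each front face $\partial^U\widehat Y$ is as cocartesian as its connectivity data permit; the inputs here are the cocartesianness of $X$ itself and of the faces $\partial^T X$, fed through the $r=2$ case two coordinates at a time and through the dual \lq rules\rq\ for cocartesian cubes. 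With $X'$ and $\widehat Y$ (hence $g$) thus under control, Proposition \ref{simple_rule_1} --- or, equivalently, Proposition \ref{fiber-cube-connectivity} applied to the $(r-1)$-cube of homotopy fibers of $g$ --- reduces the cartesianness of $X$ to the inductive hypothesis for $X'$ and for $\widehat Y$.

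What then remains is a combinatorial identity: that after this bookkeeping the estimate one gets is precisely $k+r-1=\min_{S=\coprod_\alpha T_\alpha}\sum_\alpha k_{T_\alpha}$. This comes down to sorting the partitions of $\underline r$ according to whether $r$ lies in a singleton block --- contributing $k_{\{r\}}$ plus a partition sum over $\underline{r-1}$, which is exactly the term produced by analyzing $g$ along the $r$-direction --- or in a larger block, where the monotonicity hypothesis $k_U\le k_T$ for $U\subseteq T$ lets one compare with the refined partition and see that the minimum is attained as claimed. I expect the inductive step --- specifically the construction of $\widehat Y$ and the verification that these auxiliary faces are cocartesian to the exact extent the partition count demands --- to be the main obstacle; the manipulation of minima over partitions, though delicate, is mechanical once the reductions are in place. (For a fully worked-out argument at this level of generality one can also appeal directly to \cite{Goodwillie_CALC2}.)
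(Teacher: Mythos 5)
The paper does not actually prove Theorem \ref{Blakers-Massey-n-ad}; it records the special strongly cocartesian case (all $k_T=+\infty$ for $|T|>1$) and then refers the reader to \cite{Goodwillie_CALC2}, describing the proof there as having a specific interleaved structure: the \emph{special} case in dimension $r$ is deduced from the \emph{general} case in dimension $r-1$, and the general case in dimension $r$ is then deduced from the special case in dimension $r$. Your proposal instead attempts a direct induction on $r$ for the general statement, which is a genuinely different organization of the argument. The catch is that you have not carried out the hard step, and you say so yourself: the construction of $\widehat Y$ and the verification that its front faces are ``as cocartesian as the partition count demands'' is precisely where the theorem lives, and it is left as a black box. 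This is not a small omission --- it is the content.

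Beyond incompleteness, there is a concrete structural worry. Writing $X$ as $g\colon X'\to Y$ with $Y(T)=X(T\cup\{r\})$, the front faces of $Y$ are faces of $X$ of the form $\partial_{\{r\}}^{U\cup\{r\}}X$, which do not contain the initial vertex $X(\emptyset)$ and hence are \emph{not} among the front faces $\partial^T X$ about which you have hypotheses. To extract any cocartesianness bound for these faces from the given data you must first apply some Blakers--Massey estimate to $X$ itself (to turn cocartesianness of faces through $X(\emptyset)$ into connectivity of maps emanating from $X(\{r\})$), which is exactly what you are trying to prove. The interleaving in \cite{Goodwillie_CALC2} is what breaks this circularity: the \emph{special} case for $r$-cubes only needs connectivity of the maps $X(\emptyset)\to X(\{i\})$ (no unknown cocartesianness of the $Y$-side), and the general case is then obtained by a homotopy-pushout replacement of the \emph{whole} cube $X$ (not just of the face $Y$), with the special case supplying the comparison. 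Also note that to apply Proposition \ref{simple_rule_1} in the direction you need, you would want $Y$ to be $(k+1)$-cartesian, not merely $k$-cartesian; the bookkeeping you label ``mechanical'' would have to account for this shift. In short: the base cases and the shape of the induction are reasonable, but the inductive step as proposed does not close, and the replacement-cube idea needs to be applied to $X$ rather than to $Y$ alone, with the strongly cocartesian case established first as a separate lemma.
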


An important special case is that in which $k_T=+\infty$ for all subsets $T$ having more than one element, in other words the case in which every $2$-dimensional face of $X$ is $\infty$-cocartesian, in other words the case in which the cube is equivalent to one that is made by pushout from $n$ cofibrations with a common domain. (In \cite{Goodwillie_CALC2} and \cite{Goodwillie_CALC3} these cubes are called strongly cocartesian.) In this case the statement is simply that the cube is $(1-r+\Sigma_{i=1\dots r}k_i)$-cartesian where $k_i$ is the connectivity of the map $X(\emptyset)\rightarrow X(\lbrace i\rbrace)$.

This special case implies the general case by an argument given in
\cite{Goodwillie_CALC2}. In the proof of the Theorem in \cite{Goodwillie_CALC2} the special case for $r$ is deduced from the general case for $r-1$.

There is also a dual result, whose proof is somewhat easier:

\begin{thm} \label{dual}
Let $X\:\mathcal P(S)\rightarrow \cal T$ be an $r$-cube of spaces. Assume that for each nonempty subset $T\subset S$ there is given an integer (or $+\infty$) $k_T$ such that the back face $\partial_{S-T}X$ is $k_T$-cartesian. Assume also the monotonicity condition: $k_U\leq k_T$ when $U\subset T$. Then the cube $X$ is $k$-cocartesian where $k-r+1$ is the minimum, over all partitions $S=\coprod_{\alpha}T_{\alpha}$ of $S$, of the sum $\Sigma_{\alpha}k_{\alpha}$.
\end{thm}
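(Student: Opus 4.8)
The plan is to argue by induction on $r$, in the spirit of Theorem \ref{Blakers-Massey-n-ad}. The case $r=1$ is the tautology that a $1$-cube is $k$-cocartesian iff it is $k$-connected, and the case $r=2$ is, essentially, the classical homotopy-excision theorem in its dual form. For the inductive step I would first dispose of the special case in which every $2$-dimensional back face of $X$ is $\infty$-cartesian, so that $X$ is (equivalent to) the iterated homotopy pullback of its codimension-one back faces. Writing $k_i$ for the connectivity of $X(S\setminus\{i\})\to X(S)$, the monotonicity hypothesis gives $k_{\{i\}}\leq k_i$, and the partition of $S$ into singletons shows $\mu:=\min_{\coprod_\alpha T_\alpha=S}\Sigma_\alpha k_{T_\alpha}\leq\Sigma_i k_{\{i\}}\leq\Sigma_i k_i$; so in the special case it suffices to prove $X$ is $(r-1+\Sigma_i k_i)$-cocartesian.

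For this I would regard $X$ as a diagram of spaces over $X(S)$, which may be assumed path-connected (the $\pi_0$ cases being treated separately). Over a point $\ast\in X(S)$ the cube of homotopy fibres of $X(T)\to X(S)$ is, because $X$ is strongly cartesian, equivalent to the cube $T\mapsto\prod_{i\notin T}F_i$ with the projections as structure maps, where $F_i$ is the $(k_i-1)$-connected homotopy fibre of $X(S\setminus\{i\})\to X(S)$. Iterating homotopy pushouts of projections identifies $\hocolim_{\cal P_1(S)}$ of this cube with the join $F_1\ast\cdots\ast F_r$, which is $\Sigma_i(k_i-1)+2(r-1)=(r-2+\Sigma_i k_i)$-connected. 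Since homotopy colimits of spaces are stable under base change, this join is the homotopy fibre over $\ast$ of $b(X)\colon\hocolim_{\cal P_1(S)}X\to X(S)$, whence $b(X)$ is $(r-1+\Sigma_i k_i)$-connected, i.e. $X$ is $(r-1+\Sigma_i k_i)$-cocartesian. (An alternative, perhaps cleaner to write up: slice the strongly cartesian $r$-cube as a map $X_0\to X_1$ of strongly cartesian $(r-1)$-cubes; then $X_0\to X_1$ is objectwise $k_r$-connected, the square comparing $b(X_0)$ with $b(X_1)$ is homotopy cartesian, and the inductive hypothesis together with the case $r=2$ gives the bound.)

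To reduce the general case to the special case I would follow \cite[\S2]{Goodwillie_CALC2}: replace $X$ by the strongly cartesian cube $X'$ with the same terminal face and codimension-one back faces, so that the canonical comparison $X\to X'$ is an isomorphism in those degrees; by the special case $X'$ is $(r-1+\Sigma_i k_{\{i\}})$-cocartesian, hence at least $(r-1+\mu)$-cocartesian. One then uses the hypotheses on the remaining back faces of $X$, together with the monotonicity condition, to control face-by-face how far $X\to X'$ is from an isomorphism, and combines this with the composition rules for cocartesian cubes (the cocartesian counterpart of Proposition \ref{simple_rule_1}) to conclude that $X$ itself is $(r-1+\mu)$-cocartesian. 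A convenient preliminary simplification is to pass to homotopy fibres over a point of $X(S)$, which by the same stability of homotopy colimits under base change (compare the argument for \ref{fiber-cube-connectivity}) changes none of the relevant connectivities and reduces one to the case $X(S)=\ast$.

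The first stage is essentially the excision theorem plus the join formula, and requires no analogue of the delicate reduction-to-strongly-cocartesian step needed for the special case of Theorem \ref{Blakers-Massey-n-ad}; that is why the present statement is ``somewhat easier.'' I expect the main obstacle to be the second stage: arranging the auxiliary strongly cartesian cube and carrying out the connectivity arithmetic so that the monotone family $\{k_T\}$ and the minimum over partitions produce exactly the stated constant, with the usual extra care at the bottom of the range where $\pi_0$ and $\pi_1$ intervene — the phenomenon behind the ``nonabelian fundamental group'' warning following the cocartesian composition rules.
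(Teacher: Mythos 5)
The paper gives no proof of Theorem~\ref{dual}: it is stated after \ref{Blakers-Massey-n-ad}, with the remark that its proof is ``somewhat easier'' and the strongly cartesian special case is recorded, but the argument itself is deferred to \cite{Goodwillie_CALC2}. So what can be compared is whether your outline is sound.

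Your treatment of the strongly cartesian special case is correct and essentially standard: pass to homotopy fibres over a point of $X(S)$, identify the fibre cube with the cube of projections $T\mapsto\prod_{i\notin T}F_i$, recognize $\hocolim_{\mathcal P_1(S)}$ of that cube as the join $F_1\ast\cdots\ast F_r$, use base-change stability of homotopy colimits, and apply the join connectivity formula. Your sliced alternative works too. This matches the special case the paper singles out.

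The genuine gap is in the reduction of the general case to the strongly cartesian one. You propose to replace $X$ by the strongly cartesian cube $X'$ with the same terminal vertex and codimension-one back faces, argue $X'$ is $(r-1+\mu)$-cocartesian, and then pass from $X'$ back to $X$ ``face-by-face\ldots with the composition rules for cocartesian cubes.'' This last step cannot work as stated, for two reasons. First, the composition rules run the wrong way: the paper's own Warning after the cocartesian rule says exactly that a cube $X$ need not be $k$-cocartesian even when $X'$ and the map-cube $X\to X'$ are $\infty$-cocartesian, so knowing that $X'$ is $(r-1+\mu)$-cocartesian together with bounds on $X\to X'$ does not let you conclude anything about $X$ by that route. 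Second, the termwise connectivity of $X\to X'$ is simply too low: the map $X(T)\to X'(T)$ is controlled (modulo further corrections, since $X'(U)$ already differs from $X(U)$ for $|U|<r-1$) by the cartesianity $k_{S\setminus T}$ of the corresponding back face, whereas to push $b(X')$ through to $b(X)$ you would need $\hocolim_{\mathcal P_1}X\to\hocolim_{\mathcal P_1}X'$ to be $(r-1+\mu)$-connected. When the trivial partition $\{S\}$ realizes the minimum one has $\mu=k_S\geq k_{S\setminus T}$ by monotonicity, and the termwise connectivities are nowhere near $r-1+\mu$. So the comparison-to-$X'$ strategy, as you describe it, is not merely technically fiddly but fails to give the stated bound. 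The actual reduction in \cite{Goodwillie_CALC2} uses a finer inductive scheme (slicing, the induction hypothesis on $(r-1)$-cubes, and the $r=2$ case applied to a carefully chosen square), not a blanket replacement by a strongly cartesian cube followed by connectivity bookkeeping.

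In short: your special-case argument is right; the reduction step, which you correctly identify as the crux, is where the proof really lives, and the plan you sketch for it does not close.
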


Again in the special case of a strongly cartesian cube (all $2$-dimensional faces $\infty$-cartesian) the statement is simpler: The cube is $(r-1+\Sigma_{i=1}^{n}k_i)$-cocartesian where $k_i$ is the connectivity of the map $X(S-\lbrace i\rbrace)\rightarrow X(S)$.

As indicated in the introduction, \ref{Blakers-Massey-n-ad} can be used to prove a very weak form of the main conjecture about smooth embedding spaces:

\begin{prop}\label{weak}
Let $N$ be a smooth compact $n$-dimensional manifold. Let
$(Q_1,\dots,Q_r)$, $r\geq 1$, be pairwise disjoint compact
submanifolds of $N$ transverse to $\partial N$ and let $q_i$ be the dimension of $Q_i$. Let $(P,\partial_0 P,\partial_1P)$ be a compact manifold triad and
let $p$ be the dimension of $P$. Suppose that an embedding $e_0:\partial_0 P\rightarrow \partial N$
is given, disjoint from $\partial_0Q_i$ for all $i$. For $S\subset
\underline r = \lbrace 1,\dots r\rbrace$ let $Q_S=\cup_{i\in S}Q_i$.
In this situation the $r$-dimensional cubical diagram formed by the spaces $\lbrace E(P,N-Q_S)\rbrace$ is $(1+\Sigma_i(n-p-q_i-2))$-cartesian.
\end{prop}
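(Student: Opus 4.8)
The plan is to feed the $r$-cube $\mathcal E\colon S\mapsto E(P,N-Q_S)$ into the higher Blakers--Massey theorem \ref{Blakers-Massey-n-ad}, once the right cocartesianness estimate has been established for each of its front faces. I index $\mathcal E$ so that $\mathcal E(\emptyset)=E(P,N-Q_{\underline r})$, $\mathcal E(\underline r)=E(P,N)$, and in general $\mathcal E(S)=E(P,N-Q_{\underline r\setminus S})$. Each structure map is the inclusion of an open subset, and since $E(P,N-Q_T)=\bigcap_{i\notin T}E(P,N-Q_i)$ as subspaces of $E(P,N)$, $\mathcal E$ is the cube of iterated intersections of the open sets $\mathcal U_i:=E(P,N-Q_i)\subseteq E(P,N)$. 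The only geometric ingredient is a parametrized general-position lemma; everything else is cube bookkeeping.

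That ingredient is the following ``dimension count,'' the one place the disjointness of the $Q_i$ is really used, refining the classical $r=1$ fact (the inclusion $E(P,M-R)\hookrightarrow E(P,M)$ is $(n-p-\dim R-1)$-connected) recalled in the introduction: if $R_1,\dots,R_m$ are pairwise disjoint compact submanifolds of dimensions $d_1,\dots,d_m$ in an $n$-manifold $M$ (transverse to $\partial M$, disjoint from the fixed boundary data), then $\bigcup_{j=1}^m E(P,M-R_j)\hookrightarrow E(P,M)$ is $\bigl(\textstyle\sum_j(n-p-d_j)-1\bigr)$-connected. I would prove this by multi-point transversality: a $D^k$-family $g$ of embeddings fails to factor through the union exactly at parameters $t$ with $g(t)(P)$ meeting every $R_j$; applying transversality to $D^k\times P^m\to M^m$, $(t,x_1,\dots,x_m)\mapsto(g(t)(x_1),\dots,g(t)(x_m))$, and the submanifold $R_1\times\dots\times R_m$ (the relevant points automatically have the $x_j$ distinct, as $R_j\cap R_{j'}=\emptyset$), such parameters form a set of codimension $\ge\sum_j(n-p-d_j)$ in $D^k$ and can be removed rel $\partial D^k$ once $k\le\sum_j(n-p-d_j)-1$.

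Granting this, fix $\emptyset\ne T\subseteq\underline r$ and look at the front face $\partial^T\mathcal E$, the $|T|$-cube $U\mapsto E(P,M_T-Q_{T\setminus U})$ for $U\subseteq T$, where $M_T:=N-Q_{\underline r\setminus T}$ is an $n$-manifold containing the disjoint submanifolds $Q_i$, $i\in T$. Its top vertex is $E(P,M_T)$, and since $E(P,M_T-Q_V)=\bigcap_{i\in V}E(P,M_T-Q_i)$ the canonical map $b(\partial^T\mathcal E)$ (notation of Appendix B) is, by the standard ``\v Cech $=$ union'' equivalence for the open cover $\{E(P,M_T-Q_i)\}_{i\in T}$ of $\bigcup_{i\in T}E(P,M_T-Q_i)$, identified up to equivalence with the inclusion $\bigcup_{i\in T}E(P,M_T-Q_i)\hookrightarrow E(P,M_T)$. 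By the dimension-count lemma applied inside $M_T$, this is $k_T$-connected with $k_T:=\sum_{i\in T}(n-p-q_i)-1$, i.e.\ $\partial^T\mathcal E$ is $k_T$-cocartesian. The monotonicity hypothesis $k_U\le k_T$ for $U\subseteq T$ holds once every $n-p-q_i\ge 0$, which we may assume: if some $n-p-q_i<0$ then either the conclusion is vacuous, or the relevant $E(P,N-Q_S)$ are empty and $\mathcal E$ is trivially $\infty$-cartesian; in any case one may always lower the $k_T$ to enforce monotonicity. Theorem \ref{Blakers-Massey-n-ad} then gives that $\mathcal E$ is $k$-cartesian with $k+r-1=\min_{\underline r=\coprod_\alpha T_\alpha}\sum_\alpha k_{T_\alpha}$. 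A partition into $m$ blocks contributes $\sum_\alpha k_{T_\alpha}=\sum_{i\in\underline r}(n-p-q_i)-m$, so the minimum is attained by the partition into singletons; thus $k+r-1=\sum_i(n-p-q_i)-r$, i.e.\ $k=1+\sum_i(n-p-q_i-2)$, as claimed.

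The hard part will be the dimension-count lemma — not the codimension arithmetic, but making ``the $m$ incidence conditions are independent'' precise as a single transversality statement and carrying it out rel the boundary conditions on $P$ and rel transversality to $\partial N$, so that the perturbed families remain legitimate points of the relevant embedding spaces. Everything after that (the identification of $b(\partial^T\mathcal E)$ with a union inclusion, and the Blakers--Massey arithmetic) is routine and is exactly the computation above.
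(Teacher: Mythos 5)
Your approach is exactly the paper's: identify the cocartesianness of each front face by a general-position (dimension-counting) argument — your ``dimension-count lemma'' is the precise form of what the paper calls simply ``dimension counting,'' and your \v Cech/union identification of $b(\partial^T\mathcal E)$ is the same identification the paper uses implicitly — and then feed the resulting $k_T$'s into Theorem~\ref{Blakers-Massey-n-ad}, observing that the singleton partition realizes the minimum. The arithmetic and the conclusion $k=1+\sum_i(n-p-q_i-2)$ match.

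There is, however, a genuine gap in your handling of the monotonicity hypothesis of \ref{Blakers-Massey-n-ad}. When some $n-p-q_i<0$ your three escape routes all fail in general. The conclusion need not be vacuous: e.g.\ $r=2$, $n=10$, $p=3$, $q_1=1$, $q_2=8$ gives $n-p-q_2=-1$ but the claimed cartesianness $1+(6-2)+(-1-2)=2$ is a nontrivial assertion. The spaces $E(P,N-Q_S)$ need not be empty. And ``lowering the $k_T$ to enforce monotonicity'' proves strictly less: with $k_T=\sum_{i\in T}(n-p-q_i)-1$, replacing each $k_T$ by $k'_T=\min_{U\supseteq T}k_U$ makes the singleton-partition sum $\sum_i k'_{\{i\}} = N^+ + rN^- - r$ (where $N^\pm$ are the positive and negative parts of $\sum_i(n-p-q_i)$), which is $\le N^++N^--r=\sum_i(n-p-q_i)-r$ with strict inequality as soon as $r\ge 2$ and some $n-p-q_i<0$ — so the Blakers--Massey output is too small. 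The paper closes this gap differently: if $n-p-q_r<0$, regard the $r$-cube as a map of the two $(r-1)$-cubes corresponding to $(N;Q_1,\dots,Q_{r-1})$ and $(N-Q_r;Q_1,\dots,Q_{r-1})$; by induction each is $\big(\sum_{i<r}(n-p-q_i)-2(r-1)+1\big)$-cartesian, so by \ref{simple_rule_1} the $r$-cube is one less, and since $n-p-q_r<0\le 1$ that number is at least the target $\sum_{i\le r}(n-p-q_i)-2r+1$. You should replace your monotonicity paragraph with this induction.
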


\begin{proof}
Here is the argument in the case $r=2$. Consider the embedding space $E(P,N)$ and its open subspaces $E(P,N-Q_1)$ and $E(P,N-Q_2)$. Let $k_1=n-p-q_1-1$, $k_2=n-p-q_2-1$, and $k_{12}=2n-2p-q_1-q_2-1$. The pair
$$
(E(P,N),E(P,N-Q_1)\cup E(P,N-Q_2))
$$
is $k_{12}$-connected because for a generic $s$-parameter family of embeddings $P\rightarrow N$ every embedding in the family will miss either $Q_1$ or $Q_2$ or both as long as $s<2n-2p-q_1-q_2$. This makes the square $k_{12}$-cocartesian, because the union of two open subsets is not only their pushout along the intersection but also their homotopy pushout (since the pushout square is a homology pushout by the excision theorem for singular homology, and it is a homotopy pushout because the
fundamental group condition is taken care of by the van Kampen
theorem). The same kind of dimension-counting shows that the pairs
$$
(E(P,N-Q_2),E(P,N-Q_1)\cap E(P,N-Q_2))
$$
and
$$
(E(P,N-Q_1),E(P,N-Q_1)\cap E(P,N-Q_2))
$$
are respectively $k_1$-connected and $k_2$-connected. The smaller of $k_{12}$ and $k_1+k_2$ is $k_1+k_2$, so the square is $(k_1+k_2-1)$-cartesian.

Now consider the general case. Dimension-counting shows that the cube associated with the $r$-ad
$$
(E(P,N);E(P,N-Q_1),\dots,E(P,N-Q_r))
$$
is $((n-p-q_1)+\dots+(n-p-q_r)-1)$-cocartesian, and more generally that each front face is $k_S$-cocartesian for the appropriate $S$, where $k_S=\Sigma_{i\in S}(n-p-q_i)-1$. Again the partition of $\underline r$ by singletons is the worst case, and
\ref{Blakers-Massey-n-ad} gives that the cube is $((n-p-q_1)+\dots+(n-p-q_r)-2r+1)$-cartesian.

Because of the monotonicity hypothesis in \ref{Blakers-Massey-n-ad},
the argument just given only applies in cases where $n-p-q_i\geq 0$ for all $i$. But these cases imply the rest in a trivial way, using induction over $r$: If some term, say $n-p-q_r$, is negative, then view the $r$-cube as a map of the $(r-1)$-cubes corresponding to $(N;Q_1,\dots,Q_{r-1})$ and $(N-Q_r;Q_1,\dots,Q_{r-1})$. By induction those two cubes are $k$-cartesian where $
k=(n-p-q_1)+\dots +(n-p-q_{r-1})-2(r-1)+1$. This makes the $r$-cube $(k-1)$-cartesian by \ref{simple_rule_1}, and therefore
$((n-p-q_1)+\dots +(n-p-q_r)-2r+1)$-cartesian.
\end{proof}

\section{\label{poincare} Appendix C: Poincar\'e Spaces}

We review the definitions of Poincar\'e duality space, pair, and triad. For present purposes very broad definitions will suffice: there is no need to require any finiteness or to mention Whitehead torsion. Such matters will necessarily make a brief appearance in \cite{Good_Klein}
when we make the transition to manifolds.

\begin{defn}The space $X$ satisfies $n$-dimensional duality (or is a Poincar\'e space of formal dimension $n$) if there exist a coefficient system $\cal L$ of infinite cyclic groups on $X$ (the \it dualizing system\rm) and a homology class $\lbrack X\rbrack\in H_n(X;\cal L)$ (the \it fundamental class\rm) such that for every coefficient system $\cal G$ on $X$ and every $p\in \Bbb Z$ the map
$$
H^p(X;\cal G)\rightarrow H_{n-p}(X;\cal L\otimes\cal G).
$$
given by cap product with  $\lbrack X\rbrack$ is an isomorphism. A Poincar\'e complex is a CW Poincar\'e space.
\end{defn}

A space weakly equivalent to a Poincar\'e space is again a Poincar\'e space. In order for a space to satisfy $n$-dimensional duality it is necessary and sufficient that it has a finite number of path components and that each of these satisfies $n$-dimensional duality.

The dimension is determined by $X$ (unless $X$ is empty) because it is the largest $n$ such that for some local system on $X$ the $n^{th}$ homology group is nontrivial. The dualizing system $\cal L$ is unique up to isomorphism because among all local systems of infinite cyclic groups on $X$ it is the only one for which the $n^{th}$ homology group is isomorphic to $H^0(X;\Bbb Z)$. The ordered pair $(\cal L, \lbrack X\rbrack)$ (the \it duality data\rm ) is unique up to \it unique\rm\ isomorphism because it is the universal example of a local system on $X$ and an $n$-dimensional homology class. This is so because for any local system $\cal G$ the group $\hom(\cal L,\cal G)$ of maps from $\cal L$ to $\cal G$ is naturally isomorphic to
$$
H^0(X;\hom_{\Bbb Z}(\cal L,\cal G))\cong H_n(X;\cal L\otimes_{\Bbb Z}\hom_{\Bbb Z}(\cal L,\cal G))\cong H_n(X;\cal G).
$$
Some authors include dualizing data as part of the structure of a Poincar\'e space and require compatibility of dualizing data as part of the definition of Poincar\'e embedding. By using the strong uniqueness principle above (and its generalization to pairs), we are keeping some irrelevant details out of our definitions and proofs.

\begin{defn}A map $\partial X\rightarrow X$ of spaces satisfies $n$-dimensional duality (or is an $n$-dimensional Poincar\'e map) if there exist a coefficient system $\cal L$ of infinite cyclic groups on $X$ and a homology class $\lbrack X,\partial X\rbrack\in H_n(X,\partial X;\cal L)$ such that
\begin{enumerate}
\item for every coefficient system $\cal G$ on $X$ and every $p$ the cap product map
$$
H^p(X;\cal G)\rightarrow H_{n-p}(X,\partial X;\cal L\otimes\cal G)
$$
is an isomorphism,
\item  for every coefficient system $\cal G$ on $X$ and every $p$ the cap product map
$$
H^p(X,\partial X;\cal G)\rightarrow H_{n-p}(X;\cal L\otimes\cal G)
$$
is an isomorphism, and
\item for every coefficient system $\cal G$ on $\partial X$ and every $p$ the map
$$
H^p(\partial X;\cal G)\rightarrow H_{n-p-1}(\partial X;(\cal L|_{\partial X})\otimes\cal G)
$$
given by cap product with $\partial_*\lbrack X,\partial X\rbrack\in H_{n-1}(\partial X;\cal L|_{\partial X})$ is an isomorphism.
\end{enumerate}A Poincar\'e pair is a CW pair $(X,\partial X)$ such that the inclusion $\partial X\rightarrow X$ satisfies $(n-1)$-dimensional duality.
\end{defn}

There is some redundancy in the definition above. By the five lemma, (1) follows from (2) and (3), and likewise (2) follows from (1) and (3). In general (3) does not follow from (1) and (2), but it does so if every coefficient system on $\partial X$ is the restriction of some system on $X$, for example if the map $\partial X\rightarrow X$ is $2$-connected. In any case, in the presence of (1) and (2), (3) is implied by the weaker statement that $\partial X$ satisfies $(n-1)$-dimensional duality. Indeed, given (1) and (2), any dualizing system for $\partial X$ must be isomorphic to $\cal L|_{\partial X}$ because the five lemma gives $H_{n-1}(\partial X;\cal L|_{\partial X})\cong H^0(X;\Bbb Z)$; and it is not hard to see that then $\partial_*\lbrack X,\partial X\rbrack$ must be a fundamental class

As in the absolute case, the ordered pair $(\cal L,\lbrack X,\partial X\rbrack)$ is universal ($\hom(\cal L,\cal G)\cong H_n(X,\partial X;\cal G)$) and therefore unique up to unique isomorphism.

Duality is preserved by gluing along a common boundary:

\begin{prop} \label{gluing_pd}
Given $n$-dimensional Poincar\'e pairs $(P,\partial P)$ and $(C,\partial C)$ with $\partial C=\partial P$, the pushout $N=P\cup_{\partial P}C$ is an $n$-dimensional Poincar\'e complex.
\end{prop}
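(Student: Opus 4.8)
The plan is to exhibit explicit dualizing data $(\mathcal{L},[N])$ for the pushout $N$ and then verify the duality isomorphism by a five-lemma comparison of two exact sequences. Everything will be done with local coefficients.

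\emph{Step 1: dualizing system and fundamental class.} Let $\mathcal{L}_P,\mathcal{L}_C$ be the dualizing systems of the Poincar\'e maps $\partial P\to P$ and $\partial C\to C$, and let $[P,\partial P]\in H_n(P,\partial P;\mathcal{L}_P)$, $[C,\partial C]\in H_n(C,\partial C;\mathcal{L}_C)$ be the fundamental classes. By condition (3) in the definition of a Poincar\'e map, $\partial_*[P,\partial P]$ and $\partial_*[C,\partial C]$ are fundamental classes for the closed space $\partial P=\partial C$, so by the uniqueness of duality data for $\partial P$ there is a unique isomorphism $\mathcal{L}_P|_{\partial P}\cong\mathcal{L}_C|_{\partial C}$ under which $\partial_*[P,\partial P]$ and $\partial_*[C,\partial C]$ become negatives of one another. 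Gluing $\mathcal{L}_P$ to $\mathcal{L}_C$ along this isomorphism produces a local system $\mathcal{L}$ on $N$ restricting to $\mathcal{L}_P$ on $P$ and to $\mathcal{L}_C$ on $C$ (a local system on $N$ is, by van Kampen, the same as local systems on $P$ and $C$ plus an identification over $\partial P$). Representing $[P,\partial P]$ and $[C,\partial C]$ by cellular chains $z_P$ on $P$ and $z_C$ on $C$ with boundaries supported on $\partial P$, the sign in the chosen identification lets me arrange $\partial z_P=-\partial z_C$ after modifying $z_C$ by a chain on $\partial C$; then $z_P+z_C$ is a cycle on $N$ with $\mathcal{L}$-coefficients, and I set $[N]=[z_P+z_C]\in H_n(N;\mathcal{L})$. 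By construction $[N]$ maps to $[P,\partial P]$ under $H_n(N;\mathcal{L})\to H_n(N,C;\mathcal{L})\cong H_n(P,\partial P;\mathcal{L}_P)$ and to $[C,\partial C]$ under $H_n(N;\mathcal{L})\to H_n(N,P;\mathcal{L})\cong H_n(C,\partial C;\mathcal{L}_C)$, the last isomorphisms being excision.

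\emph{Step 2: the duality isomorphism.} Fix a coefficient system $\mathcal{G}$ on $N$; I must show cap product with $[N]$ gives an isomorphism $H^p(N;\mathcal{G})\to H_{n-p}(N;\mathcal{L}\otimes\mathcal{G})$ for every $p$. I would compare the Mayer--Vietoris cohomology sequence of $N=P\cup C$,
$$\cdots\to H^p(N;\mathcal{G})\to H^p(P;\mathcal{G})\oplus H^p(C;\mathcal{G})\to H^p(\partial P;\mathcal{G})\to H^{p+1}(N;\mathcal{G})\to\cdots,$$
with the long exact sequence of the pair $(N,\partial P)$ in $\mathcal{L}\otimes\mathcal{G}$-homology, rewritten via the splitting $H_k(N,\partial P)\cong H_k(P,\partial P)\oplus H_k(C,\partial C)$ (valid already at the chain level because $N=P\cup_{\partial P}C$),
$$\cdots\to H_{n-p}(N)\to H_{n-p}(P,\partial P)\oplus H_{n-p}(C,\partial C)\to H_{n-1-p}(\partial P)\to H_{n-1-p}(N)\to\cdots.$$
The vertical maps are cap products with $[N]$, with $[P,\partial P]\oplus[C,\partial C]$, and with $\partial_*[P,\partial P]$ respectively. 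The second is an isomorphism by condition (1) for $P$ and for $C$; the third is an isomorphism by condition (3) for $P$, i.e. Poincar\'e duality for $\partial P$ with fundamental class $\partial_*[P,\partial P]$. The squares commute up to sign by naturality of cap product and its compatibility with connecting homomorphisms; this is exactly where the sign in the identification $\mathcal{L}_P|_{\partial P}\cong\mathcal{L}_C|_{\partial C}$ is used and where one checks that $[N]$ has the restrictions asserted in Step 1. Applying the five lemma degreewise then shows cap product with $[N]$ is an isomorphism in all degrees, which is precisely $n$-dimensional duality for $N$; as $N$ is visibly a CW complex, it is a Poincar\'e complex.

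\emph{The main obstacle.} The homotopy-theoretic skeleton (two exact sequences, the five lemma) is routine. The delicate part is the bookkeeping of orientations and of the excision and splitting identifications: arranging that the two boundary fundamental classes cancel, that $z_P+z_C$ therefore closes up to a genuine cycle with the correct coefficient system, that $[N]$ restricts to $[P,\partial P]$ and $[C,\partial C]$, and that the square involving the Mayer--Vietoris connecting map commutes on the nose. These constraints are what pin down the particular gluing isomorphism for $\mathcal{L}$ chosen in Step 1, and keeping them consistent is the only real work.
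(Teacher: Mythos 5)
Your proof is correct and proceeds by the same overall strategy as the paper---glue the dualizing systems along the isomorphism that matches $\partial_*[P,\partial P]$ to $-\partial_*[C,\partial C]$, produce $[N]$ restricting correctly to both excised pairs, and then run a five-lemma ladder of cap products---but the ladder you set up is a different one. The paper compares the long exact cohomology sequence of the pair $(N,C)$ (excised to $(P,\partial P)$) with the long exact homology sequence of $(N,P)$ (excised to $(C,\partial C)$); you instead compare the Mayer--Vietoris cohomology sequence of $N=P\cup C$ with the long exact homology sequence of $(N,\partial P)$, split as $H_*(N,\partial P)\cong H_*(P,\partial P)\oplus H_*(C,\partial C)$. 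Both ladders are closed by exactly the same three inputs you name (duality for $(P,\partial P)$, duality for $(C,\partial C)$, and duality for $\partial P$ with fundamental class $\partial_*[P,\partial P]$), and both require the sign relation $\partial_*[C,\partial C]=-\partial_*[P,\partial P]$ to make the square involving the connecting maps commute. The paper's version is marginally leaner in that each vertical map is a single cap product and the commutativity checks are straight naturality plus the $\partial_*(\xi\cap\mu)=\pm(\xi|_\partial)\cap\partial_*\mu$ formula, whereas yours has a direct-sum vertical map and the sign convention of Mayer--Vietoris to track; on the other hand your ladder is visibly symmetric in $P$ and $C$. You also construct $[N]$ at the chain level rather than via the exact sequence $0\to H_n(N)\to H_n(N,P)\oplus H_n(N,C)\to H_{n-1}(\partial P)$ as the paper does, but these are equivalent. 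The orientation and excision bookkeeping you flag as the real work is indeed the only delicate point, and it is resolved by the same choice of sign in the gluing isomorphism in both arguments.
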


\proof Consider the pushout square
$$
\xymatrix{
 \partial P\ar[r]\ar[d] & C\ar[d]\\
P\ar[r] & N
}
$$
If $(\cal L^P,\lbrack P,\partial P\rbrack)$ and $(\cal L^C,\lbrack C,\partial P\rbrack)$ are duality data for the pairs then there is a unique isomorphism between $\cal L^P|_{\partial P}$ and $\cal L^C|_{\partial P}$ taking $\partial_*\lbrack P,\partial P\rbrack$ to $-\partial_*\lbrack C,\partial P\rbrack$. This leads to duality data $(\cal L^N,\lbrack N\rbrack)$ for $N$ as follows. The system $\cal L^N$ on $N$ is chosen, using that isomorphism, so that it restricts to $\cal L^P$ and $\cal L^C$. The class $\lbrack N\rbrack\in H_n(N;\cal L^N)$ is chosen to map to the image of $\lbrack P,\partial P\rbrack$ in $H_n(N,C;\cal L^N)$ and to the image of $\lbrack C,\partial P\rbrack$ in $H_n(N,P;\cal L^N)$, using an exact sequence
$$
0\rightarrow H_n(N;\cal L^N)\rightarrow H_n(N,P;\cal L^N)\oplus H_n(N,C;\cal L^N)\rightarrow  H_{n-1}(\partial P;\cal L^N)
$$
The following diagram makes possible a five-lemma argument to show that the cap product with $\lbrack N\rbrack$ is an isomorphism. We have
$$
\xymatrix{
{}\ar[d] & {}\ar[d]\\
H^p(N,C)\ar[r]\ar[d] & H_{n-p}(P)\ar[d]\\
H^p(N)\ar[r]\ar[d] & H_{n-p}(N)\ar[d]\\
H^p(C)\ar[r]\ar[d] & H_{n-p}(N,P)\ar[d]\\
H^{p+1}(N,C)\ar[r]\ar[d] & H_{n-p-1}(P)\ar[d]\\
{} & {}
}
$$
where all cohomology is with coefficients in a given system $\cal G$ on $N$ (or its restriction) and all homology is with coefficients in $\cal L^N\otimes \cal G$. The first horizontal arrow is the composition of $\lbrack P,\partial P\rbrack\cap$ with an excision isomorphism, the second is $\lbrack N\rbrack\cap$, and the third is the composition of an excision isomorphism with $\lbrack C,\partial C\rbrack\cap$. The first square commutes because the two composed maps $H^p(N,C)\rightarrow H_{n-p}(N)$ are given by cap product with the same element of $H_n(N,C;\cal L^N)$; by construction $\lbrack N\rbrack$ has the same image in $H_n(N,C;\cal L^N)$ as $\lbrack P,\partial P\rbrack$. The second square commutes because the two composed maps $H^p(N)\rightarrow H_{n-p}(N,P)$ are given by cap product with the same element of $H_n(N,P;\cal L^N)$; again by construction $\lbrack N\rbrack$ has the same image in $H_n(N,P;\cal L^N)$ as $\lbrack C,\partial P\rbrack$. The third square commutes!
 up to sign because the two composed maps may both be expressed as
$$
H^p(C)\rightarrow H^p(\partial P)\rightarrow H_{n-p-1}(\partial P)\rightarrow H_{n-p-1}(P)
$$
with the center arrow representing cap product with $\partial_*\lbrack P,\partial P\rbrack$ in one case and with $\partial_*\lbrack C,\partial P\rbrack$ in the other.
\endproof

Of course, it follows that more generally if
$$
\xymatrix{
 \partial P\ar[r]\ar[d] & C\ar[d]\\
P\ar[r] & N
}
$$
is a homotopy pushout square and both ${\partial P\rightarrow P}$ and $\partial P\rightarrow C$ are $n$-dimensional Poincar\'e maps then $N$ is an $n$-dimensional Poincar\'e space.

If $\partial P\rightarrow P$ and $N$ are given, both satisfying $n$-dimensional duality, then a square as above is called a Poincar\'e embedding of $P$ in $N$. In other words, a Poincar\'e embedding consists of a map $P\rightarrow N$ and a factorization $\partial P\rightarrow C\rightarrow N$ of the composition of ${\partial P\rightarrow P\rightarrow N}$ that satisfies the following two conditions: the resulting square is a homotopy pushout, and the map $\partial P\rightarrow C$ satisfies $n$-dimensional duality.

\ref{gluing_pd}  says that if $P$ and $C$ satisfy duality then $P\cup C$ satisfies duality. We now address the  converse question: if $P$ and $P\cup C$ are known to satisfy duality, how can one tell whether $C$ satisfies duality? In other words, how does one recognize a Poincar\'e embedding?

Notice that if the square above is a homology pushout then duality for $N$ and for $\partial P\rightarrow P$ leads to a map $\cal L^P\rightarrow \cal L^N|_P$ of coefficient systems on $P$, namely the element of
$$
\hom(\cal L^P,\cal L^N|_P)\cong H^0(P;\hom_{\Bbb Z}(\cal L^P,\cal L^N|_P))\cong H_n(P,\partial P;\cal L^N|_P)
$$
that corresponds to the image of $\lbrack N\rbrack$ in $H_n(N,C;\cal L^N)$.

\begin{prop}  \label{homological_test}
Suppose that the square
$$
\xymatrix{
 \partial P\ar[r]\ar[d] & C\ar[d]\\
P\ar[r] & N
}
$$
is a homotopy pushout, and that both the space $N$ and the map ${\partial P\rightarrow P}$ satisfy $n$-dimensional duality, and that the map $\partial P\rightarrow P$ is $2$-connected. Then the following conditions are equivalent:
\begin{enumerate}
\item  The map $\partial P\rightarrow C$ satisfies $n$-dimensional duality; that is, the square constitutes a Poincar\'e embedding.
\item The map $\cal L^P\rightarrow \cal L^N|_P$ described above is an isomorphism of coefficient systems on $P$.
\item  There is an isomorphism $\cal L^N|_P\cong \cal L^P$ such that the image of $\lbrack N\rbrack$ in $H_n(N,C;\cal L^N)$ coincides with the image of $\lbrack P,\partial P\rbrack$.
\end{enumerate}
\end{prop}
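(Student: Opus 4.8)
The strategy is to treat $(2)\Leftrightarrow(3)$ and $(1)\Rightarrow(3)$ as soft and to put the real work into $(3)\Rightarrow(1)$. For $(2)\Leftrightarrow(3)$: by the universal property of the duality data $(\cal L^P,[P,\partial P])$ the rule $f\mapsto f_*[P,\partial P]$ is a bijection $\hom(\cal L^P,\cal L^N|_P)\xrightarrow{\ \sim\ }H_n(P,\partial P;\cal L^N|_P)\cong H_n(N,C;\cal L^N)$, and by construction the canonical map of (2) is the element corresponding to the image of $[N]$; so an isomorphism $\psi\:\cal L^N|_P\to\cal L^P$ matching $[N]$ with $[P,\partial P]$ exists iff $\psi^{-1}$ is that element, i.e.\ iff the canonical map is invertible --- a one-line argument. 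For $(1)\Rightarrow(3)$, assume $\partial P\to C$ satisfies $n$-dimensional duality, say with dualizing system $\cal L^C$ and class $[C,\partial P]$. Then Proposition \ref{gluing_pd}, applied in its homotopy-pushout form, produces duality data $(\cal M,[N]')$ on $N$ with $\cal M|_P=\cal L^P$, $\cal M|_C=\cal L^C$, and with the image of $[N]'$ in $H_n(N,C;\cal M)$ equal to the excision image of $[P,\partial P]$. Since duality data on $N$ is unique up to unique isomorphism, the given data $(\cal L^N,[N])$ is carried to $(\cal M,[N]')$ by an isomorphism; restricting it to $P$ gives an isomorphism $\cal L^N|_P\cong\cal L^P$ under which the image of $[N]$ becomes the image of $[P,\partial P]$, which is (3).

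For $(3)\Rightarrow(1)$ the plan is to run the five-lemma ladder from the proof of \ref{gluing_pd} ``in the opposite direction''. Set $\cal L^C:=\cal L^N|_C$ and let $[C,\partial P]\in H_n(C,\partial P;\cal L^C)$ be the excision image of the image of $[N]$ in $H_n(N,P;\cal L^N)$; I must show $(\cal L^C,[C,\partial P])$ is duality data for $\partial P\to C$. A preliminary point: the square being a homotopy pushout and $\partial P\to P$ being $2$-connected, its cobase change $C\to N$ is $2$-connected, so every local system on $C$ is pulled back from $N$ and the duality isomorphisms for $N$ are available with arbitrary coefficients. Now form the ladder of \ref{gluing_pd}: the long exact cohomology sequence of the pair $(N,C)$ mapping into the long exact homology sequence of the pair $(N,P)$, the horizontal maps being (modulo excision) cap products with $[P,\partial P]$, with $[N]$, and with $[C,\partial P]$ respectively. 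The same squares checked in \ref{gluing_pd} commute --- the key one commutes precisely because, by hypothesis (3), $[N]$ and $[P,\partial P]$ have matching images in $H_n(N,C;\cal L^N)$, and the bottom one commutes up to sign --- but now the rows I know to be isomorphisms are the $[P,\partial P]$-rows (from duality of $\partial P\to P$) and the $[N]$-rows (from duality of $N$), so the five lemma forces the $[C,\partial P]$-rows, namely the maps $H^p(C;\cal G)\to H_{n-p}(C,\partial P;\cal L^C\otimes\cal G)$, to be isomorphisms. That is the first duality condition for $\partial P\to C$.

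It remains to obtain the other two duality conditions and to identify the main obstacle. For the third condition I would observe that $\partial_*[C,\partial P]$ and $\partial_*[P,\partial P]$ agree up to sign and up to the coefficient isomorphism $\cal L^N|_{\partial P}\cong\cal L^P|_{\partial P}$ furnished by (3) --- both are the image of $[N]$ under the Mayer--Vietoris connecting map $H_n(N;\cal L^N)\to H_{n-1}(\partial P;\cal L^N)$ --- and $\partial P$ already satisfies $(n-1)$-dimensional duality (being the boundary in the Poincar\'e map $\partial P\to P$) with that class as fundamental class; so the third condition holds. The second condition then follows from the first and third by the five-lemma redundancy recorded just after the definition of a Poincar\'e pair. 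Hence $\partial P\to C$ is an $n$-dimensional Poincar\'e map, which is (1); together with $(1)\Rightarrow(3)$ and $(2)\Leftrightarrow(3)$ this proves the equivalence. The main obstacle is the coefficient-system bookkeeping in $(3)\Rightarrow(1)$: one must carry the identifications $\cal L^N|_P\cong\cal L^P$, $\cal L^N|_C=\cal L^C$, $\cal L^N|_{\partial P}\cong\cal L^P|_{\partial P}$ through the ladder and re-verify, as in \ref{gluing_pd}, that every square commutes (up to sign), with hypothesis (3) being exactly what makes the square linking $[N]$ with $[P,\partial P]$ commute. Conceptually there is almost nothing new: it is the proof of \ref{gluing_pd} solved for a different corner of the square, the one genuinely new input being the $2$-connectedness hypothesis, used only to replace arbitrary coefficient systems on $C$ by systems on $N$.
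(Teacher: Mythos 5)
Your proof is correct and follows essentially the same route as the paper. Both put the real content in $(3)\Rightarrow(1)$, defining $\mathcal L^C=\mathcal L^N|_C$ and $[C,\partial P]$ as the image of $[N]$, then running the ladder from the proof of \ref{gluing_pd} with the five lemma to establish the cap-product isomorphism on $C$, and finally invoking $2$-connectedness of $C\to N$ (cobase change of the $2$-connected $\partial P\to P$) to pass from coefficient systems pulled back from $N$ to all coefficient systems on $C$; and both treat $(1)\Rightarrow(2)\Rightarrow(3)$ as soft. The only difference is one of exposition: you spell out the verification of conditions (2) and (3) of the duality definition for $\partial P\to C$ (via the boundary class matching and the redundancy remark after the definition of a Poincar\'e pair), whereas the paper leaves these to the reader; and you route $(1)\Rightarrow(3)$ directly through \ref{gluing_pd} and uniqueness of duality data, where the paper says $(1)\Rightarrow(2)\Rightarrow(3)$ without elaboration---but the underlying content is the same.
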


\proof Certainly $(1) \Rightarrow (2) \Rightarrow (3)$, even without the assumption of $2$-connectedness. For $(3) \Rightarrow (1)$ we proceed as follows. To produce duality data for $(C,\partial P)$, let $\cal L^C$ be the restriction of $\cal L^N$ and let $\lbrack C,\partial P\rbrack\in H_n(C,\partial P;\cal L^C)$ be the unique element that maps to the image of $\lbrack N\rbrack$ in $H_n(N,P;\cal L^N)$. Use the same ladder diagram as in the proof of
\ref{gluing_pd}. The first square commutes this time because the needed compatibility between $\lbrack N\rbrack$ and $\lbrack P,\partial P\rbrack$ has been assumed. The second commutes because $\lbrack C,\partial P\rbrack$ has been chosen so as to have the needed compatibility with $\lbrack N\rbrack$. The third commutes up to sign because again $\partial_*\lbrack P,\partial P\rbrack=-\partial_*\lbrack C,\partial P\rbrack$. Duality for $N$ and for $P$ gives two isomorphisms, so by the five-lemma the map $H^p(C)\rightarrow H_{n-p}(C,\partial P)$ given by cap product with $\lbrack C,\partial P\rbrack\in H_n(C,\partial P;\cal L^C)$ is an isomorphism. This is valid for all coefficient systems on $C$ that extend to $N$. In view of the assumption of $2$-connectedness, that means all systems.
\endproof

To summarize, $C$ satisfies duality if both $P$ and $P\cup C$ satisfy duality, as long as the fundamental classes of $P$ and $N$ are compatible and the $2$-connectedness holds.

The condition on fundamental classes cannot be dispensed with in general. For example, in the pushout square
$$
\xymatrix{
S^{n-1}\ar[r]\ar[d] & S^{n-1}\vee N\ar[d]\\
D^n\ar[r] & D^n\vee N
}
$$
if $N\simeq D^n\vee N$ satisfies Poincar\'e duality there is no reason why the pair $(S^{n-1}\vee N,S^{n-1})$ should do so.

One may also speak of a Poincar\'e embedding of $P$ in $N$ when $P$ is a $p$-dimensional duality space with $p<n$. This means a codimension zero Poincar\'e embedding of $(P,\partial P)$ in $N$ where $\partial P\rightarrow P$ is a map whose homotopy fibers are
weakly equivalent to the sphere $S^{n-p-1}$. (Such a map is
necessarily an $n$-dimensional duality pair.) We are not taking that
approach here.

We briefly recall how some of these notions extend
from pairs to triads.

\begin{defn}A square diagram
$$
\xymatrix
{
\partial_{12}X\ar[r]\ar[d] & \partial_1X\ar[d]\\
\partial_2X\ar[r]
& X
}
$$
satisfies $n$-dimensional Poincar\'e duality if both
$\partial_{12}X\rightarrow \partial_1X$ and
$\partial_{12}X\rightarrow \partial_2X$ are $(n-1)$-dimensional
Poincar\'e maps and $$
\hocolim (\partial_1
X\leftarrow\partial_{12}X\rightarrow \partial_2X)\rightarrow X
$$
is
an $n$-dimensional Poincar\'e map. A Poincar\'e triad is a CW triad
such that the associated square
is a Poincar\'e square.
\end{defn}

A
five-lemma argument yields cap product isomorphisms
$$
H^p(X,X_1;\cal G)\cong H_{n-p}(X,X_2;\cal L\otimes \cal
G).
$$

Proposition \ref{gluing_pd} generalizes to apply to gluing two Poincar\'e
triads along part of the boundary. We omit the proof, which involves
no new ideas. The statement is:

\begin{prop}If two $n$-dimensional
Poincar\'e triads $(P;\partial_0 P,\partial_1P)$ and $(C;\partial_1
C,\partial_2C)$ are such that $(\partial_1
P,\partial_{01}P)=(\partial_1 C,\partial_{12}C)$, then
$$(P\cup_{\partial_1P}C,\partial_0P\cup_{\partial_{01}P}\partial_2C)$$
is an $n$-dimensional Poincar\'e pair.
\end{prop}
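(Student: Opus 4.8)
The plan is to carry out, in the relative setting, exactly the argument used for Proposition~\ref{gluing_pd}. Write $N=P\cup_{\partial_1P}C$, put $L=\partial_{01}P=\partial_{12}C$, and set $M=\partial_0P\cup_L\partial_2C$; the claim is that the inclusion $M\to N$ is an $n$-dimensional Poincar\'e map. First I would dispose of $M$ itself: since $P$ and $C$ are Poincar\'e triads, $(\partial_0P,L)$ and $(\partial_2C,L)$ are $(n-1)$-dimensional Poincar\'e pairs with the same ``boundary'' $L$, so \ref{gluing_pd} (applied with $n$ replaced by $n-1$) shows that $M$ is an $(n-1)$-dimensional Poincar\'e complex. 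By the redundancy observed after the definition of a Poincar\'e map, it then suffices to produce a coefficient system $\cal L^N$ on $N$ and a class $[N,M]\in H_n(N,M;\cal L^N)$ for which the two cap-product maps
$$
H^p(N;\cal G)\to H_{n-p}(N,M;\cal L^N\otimes\cal G),\qquad
H^p(N,M;\cal G)\to H_{n-p}(N;\cal L^N\otimes\cal G)
$$
are isomorphisms for all $\cal G$ and $p$; condition (3) in the definition of a Poincar\'e map is then automatic because $M$ already satisfies $(n-1)$-dimensional duality.

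Next I would assemble the duality data. The restrictions $\cal L^P|_{\partial_1P}$ and $\cal L^C|_{\partial_1C}$ are both dualizing systems for the $(n-1)$-dimensional Poincar\'e pair $(\partial_1P,L)=(\partial_1C,L)$, so by the uniqueness of duality data there is a unique isomorphism between them; I normalize it, as in \ref{gluing_pd}, so that it carries the fundamental class of $(\partial_1P,L)$ to minus that of $(\partial_1C,L)$. A van Kampen / Mayer--Vietoris argument then produces $\cal L^N$ on $N$ restricting to $\cal L^P$ on $P$ and to $\cal L^C$ on $C$. For the fundamental class, excision gives $H_n(N,M\cup C;\cal L^N)\cong H_n(P,\partial P;\cal L^P)$ and $H_n(N,M\cup P;\cal L^N)\cong H_n(C,\partial C;\cal L^C)$, and combining the Mayer--Vietoris sequence of the decomposition $N=(M\cup C)\cup(M\cup P)$ with the long exact sequence of the triple $(N,M\cup\partial_1P,M)$, together with the vanishing $H_n(\partial_1P,L)=0$ (again because $(\partial_1P,L)$ is $(n-1)$-dimensional), yields an exact sequence
$$
0\to H_n(N,M;\cal L^N)\to H_n(P,\partial P;\cal L^P)\oplus H_n(C,\partial C;\cal L^C)\to H_{n-1}(\partial_1P,L;\cal L^N).
$$
I would define $[N,M]$ to be the unique preimage of $([P,\partial P],[C,\partial C])$; this pair lies in the kernel of the last map precisely because of the sign normalization of the identification $\cal L^P|_{\partial_1P}\cong\cal L^C|_{\partial_1C}$.

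Finally I would run the five lemma on a ladder, exactly as in \ref{gluing_pd}. For the second cap-product isomorphism, the left column is the cohomology exact sequence of the triple $(N,M\cup C,M)$ (with $H^p(N,M\cup C)\cong H^p(P,\partial P)$ and $H^p(M\cup C,M)\cong H^p(C,\partial_2C)$ by excision), the right column is the homology exact sequence of the pair $(N,P)$ (with $H_{n-p}(N,P)\cong H_{n-p}(C,\partial_1C)$ by excision), and the three rungs are cap product with $[P,\partial P]$, with $[N,M]$, and with $[C,\partial C]$, each composed with the relevant excision isomorphisms. The two outer rungs are isomorphisms by the triad duality isomorphisms $H^p(P,\partial P)\cong H_{n-p}(P)$ and $H^p(C,\partial_2C)\cong H_{n-p}(C,\partial_1C)$; the top two squares commute because $[N,M]$ was chosen compatible with $[P,\partial P]$ and $[C,\partial C]$; and the bottom square commutes up to sign because both of its composites factor through cap product on the common gluing space $\partial_1P$ with $\partial_*[P,\partial P]$ and $\partial_*[C,\partial C]$ respectively, which were arranged to agree up to sign. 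The first cap-product isomorphism follows from the symmetric ladder, built from the pair $(N,C)$ and the triple $(N,M\cup P,M)$, using the triad isomorphisms $H^p(P,\partial_1P)\cong H_{n-p}(P,\partial_0P)$ and $H^p(C)\cong H_{n-p}(C,\partial C)$. The only real obstacle is bookkeeping --- correctly matching the various pieces of $M$, $\partial P$ and $\partial C$ under the excisions and tracking the sign on $\partial_1P$ --- so that, as promised in the text, there is no genuinely new idea beyond the proof of \ref{gluing_pd}.
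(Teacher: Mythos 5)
The paper does not actually write out a proof here; it says only that \ref{gluing_pd} ``generalizes to apply to gluing two Poincar\'e triads along part of the boundary. We omit the proof, which involves no new ideas.'' Your proposal supplies exactly the omitted relative version of the \ref{gluing_pd} argument (gluing dualizing systems with the sign normalization, constructing $[N,M]$ from the exact sequence, and the excision-plus-five-lemma ladder built from $(N,M\cup C,M)$ and $(N,P)$, together with the symmetric one), so it is correct and takes essentially the same approach as the paper intends.
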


We record the
equivalent statement involving homotopy pushouts and no cofibrancy
hypothesis: In a
cube
$$
\xymatrix{
(\partial_{01}P\rightarrow\partial_1P)\ar[r]\ar[d]
& (\partial_2C\rightarrow C)\ar[d]\\
(\partial_{0}P\rightarrow
P)\ar[r] & (\partial N\rightarrow N)
}
$$
if both of the
squares
$$
\xymatrix{
\partial_1P\ar[r]\ar[d]& C\ar[d]\\
P\ar[r] &
N
}
$$
$$
\xymatrix{
\partial_{01}P\ar[r]\ar[d]&
\partial_2C\ar[d]\\
\partial_0P\ar[r] & \partial N
}
$$
are homotopy
pushouts and
$$
\xymatrix{
\partial_{01}P\ar[r]\ar[d]&
\partial_1P\ar[d]\\
\partial_0P\ar[r] &
P
}
$$
$$
\xymatrix{
\partial_{01}P\ar[r]\ar[d]&
\partial_1P\ar[d]\\
\partial_2C\ar[r] & C
}
$$
 are Poincar\'e
squares then $\partial N\rightarrow N$ is a Poincar\'e map.

Such a
cubical diagram may be called a Poincar\'e embedding of $P$ in $N$.
Our point of view here is that the Poincar\'e triad $P$ and the
Poincar\'e pair $N$ are given, as
well as the embedding
$$
\xymatrix{
\partial_{01}P\ar[r]\ar[d]&
\partial_2C\ar[d]\\
\partial_0P\ar[r]& \partial N
}
$$
of $\partial_0P$ in $\partial N$.  To give an embedding of $P$ in $N$ is then to

specify a map $P\rightarrow N$ compatible with the given map

$\partial _0P$ in $\partial N$
and a space $C$ with a suitable

factorization
$$
\partial_1P\cup_{\partial_{01}P}\partial_2C\rightarrow C\rightarrow N
$$

The homological test

\ref{homological_test} extends to this setting:

\begin{prop}

Suppose that in the cube
$$
\xymatrix{
(\partial_{01}P\rightarrow\partial_1P)\ar[r]\ar[d]& (\partial_2C\rightarrow C)\ar[d]\\
(\partial_{0}P\rightarrow P)\ar[r] & (\partial N\rightarrow N)
}
$$
both of the squares
$$
\xymatrix{
\partial_1P\ar[r]\ar[d]& C\ar[d]\\
P\ar[r] & N
}
$$
$$
\xymatrix{
\partial_{01}P\ar[r]\ar[d]& \partial_2C\ar[d]\\
\partial_0P\ar[r] & \partial N
}
$$
are homotopy pushouts and both the
square
$$
\xymatrix{
\partial_{01}P\ar[r]\ar[d]& \partial_1P\ar[d]\\
\partial_0P\ar[r] & P
}
$$
and the map $\partial N\rightarrow N$
satisfy $n$-dimensional duality. Then the following conditions are equivalent:
\begin{enumerate}
\item The square
$$
\xymatrix{
\partial_{01}P\ar[r]\ar[d]& \partial_1P\ar[d]\\
\partial_2C\ar[r] & C
}
$$
satisfies $n$-dimensional duality; that is, the cube constitutes a Poincar\'e
embedding.
\item The canonical map $\cal L^P\rightarrow \cal
L^N|_P$
is an isomorphism of coefficient systems on $P$.
\item
There is an isomorphism $\cal L^N|_P\cong \cal L^P$ such that the
image of
$\lbrack N\rbrack$ in $H_n(N,C;\cal L^N)$ coincides with the image of
$\lbrack P,\partial P\rbrack$.
\end{enumerate}
\end{prop}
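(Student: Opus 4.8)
The plan is to mimic, step for step, the proof of the pair version \ref{homological_test}, with the cap-product ladder of \ref{gluing_pd} replaced by its triad counterpart, and with the lower-dimensional duality clauses in the definition of a Poincar\'e square extracted from the top-dimensional one by the same five-lemma bookkeeping that already underlies the definitions of Poincar\'e pair and triad.

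First I would dispose of $(1)\Rightarrow(2)\Rightarrow(3)$, which are formal and use only the homotopy-pushout hypotheses, just as in \ref{homological_test}. If $(1)$ holds then $C$ is a Poincar\'e triad, so the construction used in the triad version of \ref{gluing_pd} manufactures duality data for $N$ whose dualizing system is by construction an extension of $\cal L^P$; by uniqueness of dualizing data this \emph{is} the dualizing data of $N$, so the canonical map $\cal L^P\to\cal L^N|_P$ is an isomorphism and the fundamental classes are compatible, giving $(2)$ and $(3)$. That $(2)$ alone gives $(3)$ then follows exactly as in \ref{homological_test}, by transporting the image of $[N]$ in $H_n(N,C;\cal L^N)$ along the inverse of the canonical isomorphism.

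The work is in $(3)\Rightarrow(1)$. Here I would build duality data $(\cal L^C,[C,\partial C])$ exhibiting the square $(\partial_{01}P\to\partial_1 P;\ \partial_2 C\to C)$ as a Poincar\'e square: take $\cal L^C:=\cal L^N|_C$, and construct $[C,\partial C]\in H_n(C,\partial C;\cal L^C)$ as in \ref{gluing_pd}, by prescribing its images in the relative groups $H_n(C,\partial_1 C;\cal L^C)$ and $H_n(C,\partial_2 C;\cal L^C)$ compatibly---excision along the homotopy pushout $N\simeq P\cup_{\partial_1 P}C$ identifies the first of these with a quotient of $H_n(N,P;\cal L^N)$ receiving the image of $[N]$, and hypothesis $(3)$ is precisely the compatibility of that image with $[P,\partial P]$ needed for the prescription to be consistent. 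Then, as in \ref{gluing_pd} and \ref{homological_test}, I would lay out the ladder of long exact sequences whose middle column is the family of cap-product maps for $\partial N\to N$ (isomorphisms by hypothesis), whose one outer column is the family of cap-product maps for the Poincar\'e triad $P$ (isomorphisms by hypothesis), and whose other outer column is the family of cap-product maps for the candidate triad $C$; the two homotopy-pushout squares furnish the excision identifications, and the square tying $[N]$ to $[P,\partial P]$ commutes by $(3)$. The five lemma then delivers the cap-product isomorphisms for $C$, and the remaining clauses---that $\partial_{01}P\to\partial_2 C$ is an $(n-1)$-dimensional Poincar\'e map and that $\partial C\to C$ is an $n$-dimensional Poincar\'e map---drop out of the same comparison, just as clause (3) of the definition of a Poincar\'e map follows from clauses (1), (2) and duality of the boundary.

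I expect the main obstacle to be entirely organizational: keeping the two-step boundary structure of $C$ straight and making sure the excision isomorphisms land the pieces of the fundamental class in the correct relative groups (note that collapsing $P$ inside $N$ produces $C/\partial_1 C$, not $C/\partial C$, so the last step of the prescription is genuinely needed), while assembling a single ladder that feeds the five lemma. There is no new idea beyond \ref{gluing_pd} and \ref{homological_test}. As in the pair case, the only non-formal point is that the five lemma gives cap-product isomorphisms only with coefficient systems pulled back from $N$; upgrading to all coefficient systems on $C$ uses a connectivity hypothesis (the $2$-connectedness of $\partial P\to P$ assumed in \ref{homological_test}), which is available in the intended applications, so that without such a hypothesis the implication $(3)\Rightarrow(1)$ holds only in the corresponding cohomological sense.
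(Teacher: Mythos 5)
Your proposal matches the paper's own approach exactly: the paper's proof of this proposition is literally the one-line remark ``The proof is the same as before,'' referring to \ref{homological_test}, and your sketch simply fills in what that one line means (transport the ladder of \ref{gluing_pd} to the triad setting, prescribe $[C,\partial C]$ by its images in $H_n(C,\partial_i C)$ via excision, run the five lemma). Your closing observation is sharp and worth recording: as printed, the statement of the triad proposition appears to have inadvertently dropped the $2$-connectedness hypothesis present in \ref{homological_test} (here one wants $\partial_1 P\to P$ $2$-connected, so that $C\to N$ is $2$-connected and every coefficient system on $C$ extends to $N$), and without some such hypothesis the implication $(3)\Rightarrow(1)$ delivers only the cohomological form of duality for $C$.
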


The proof is the same as before.

It is worth noting, although we do not use it
in this paper, that in many cases the homological test is
superfluous:

\begin{prop}In the situation above, if the map
$\partial_0P\rightarrow P$ is $0$-connected then the diagram does
constitute a Poincar\'e embedding.
\end{prop}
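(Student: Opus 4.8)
The plan is to deduce this from the triad version of the homological test (the Poincar\'e-embedding recognition criterion stated just above, extending Proposition \ref{homological_test}): all of its homotopy-pushout and duality hypotheses are part of ``the situation above'', so it suffices to verify one of its equivalent conditions, and I would verify condition (3) — the compatibility of $[N]$ with $[P,\partial P]$. The entire content to be extracted from the hypothesis is that $\partial_0 P\rightarrow P$ is $0$-connected, i.e.\ surjective on $\pi_0$ (Appendix A).

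The crux is a vanishing. Surjectivity on $\pi_0$ gives $H^0(P,\partial_0 P;\mathcal G)=0$ for every coefficient system $\mathcal G$ on $P$, so by the triad cap-product isomorphism $H^0(P,\partial_0 P;\mathcal G)\cong H_n(P,\partial_1 P;\mathcal L^P\otimes\mathcal G)$ one gets $H_n(P,\partial_1 P;\mathcal M)=0$ for every local system $\mathcal M$ of infinite cyclic groups on $P$. Feeding this into the long exact sequence of the triple $\partial_1 P\subset\partial P\subset P$ and using the excision identification $H_{n-1}(\partial P,\partial_1 P;-)\cong H_{n-1}(\partial_0 P,\partial_{01}P;-)$, one sees that the ``restrict the fundamental class to the $\partial_0 P$-piece'' homomorphism
$$
H_n(P,\partial P;\mathcal L^P)\longrightarrow H_{n-1}(\partial_0 P,\partial_{01}P;\mathcal L^P|_{\partial_0 P})
$$
is injective. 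Now $[P,\partial P]$ maps under this to the fundamental class $[\partial_0 P,\partial_{01}P]$ of the $(n-1)$-dimensional Poincar\'e pair $(\partial_0 P,\partial_{01}P)$, by the standard compatibility of the fundamental class of a Poincar\'e triad with those of its boundary pieces; and the image of $[N]$ in the relevant relative group $H_n(N,\partial_0 P\cup C;\mathcal L^N)\cong H_n(P,\partial P;\mathcal L^N|_P)$ maps to the same class, this time because $(N,\partial N)$ satisfies duality and the given embedding of $\partial_0 P$ in $\partial N$ is a genuine Poincar\'e embedding, so that $\mathcal L^N$ restricts on $\partial N$, and then on $\partial_0 P$, to the dualizing systems of $\partial N$ and of $(\partial_0 P,\partial_{01}P)$ compatibly with fundamental classes. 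Identifying $\mathcal L^N|_P$ with $\mathcal L^P$ (legitimate, since both restrict to $\partial_0 P$ as the dualizing system of $(\partial_0 P,\partial_{01}P)$, and by \ref{gluing_pd} and uniqueness of dualizing data), the injectivity above forces the image of $[N]$ to equal $[P,\partial P]$. This is condition (3), and the triad homological test then gives that the square $(\partial_{01}P\rightarrow\partial_1 P)\rightarrow(\partial_2 C\rightarrow C)$ satisfies $n$-dimensional duality, i.e.\ the cube is a Poincar\'e embedding.

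The main obstacle I anticipate is not the idea but the bookkeeping around excision and fundamental classes. One must set up the isomorphism $H_n(N,\partial_0 P\cup C;\mathcal L^N)\cong H_n(P,\partial P;\mathcal L^N|_P)$ with the correct relative part — it is $\partial_0 P\cup_{\partial_{01}P}C$ rather than $C$ that must be collapsed, since $N/(\partial_0 P\cup C)\simeq P/\partial P$ whereas $N/C\simeq P/\partial_1 P$ — and then trace the images of $[N]$ and of $[P,\partial P]$ through the boundary maps and the identification of dualizing systems carefully enough that no sign or unit ambiguity survives upon restriction to $\partial_0 P$. Once those naturality statements are pinned down, the argument is exactly the short one above.
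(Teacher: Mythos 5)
You take a genuinely different route from the paper, and it has a gap precisely at the step you flag as ``legitimate.'' The paper's proof verifies condition (2), not (3): the canonical map $\phi\colon\mathcal L^P\rightarrow\mathcal L^N|_P$ is a map of local systems defined over \emph{all} of $P$ (it is the element of $\hom(\mathcal L^P,\mathcal L^N|_P)\cong H_n(P,\partial P;\mathcal L^N|_P)$ corresponding to the restriction of $[N]$), and its restriction to $\partial_0 P$ is the canonical map for the given Poincar\'e embedding of $\partial_0 P$ in $\partial N$, hence an isomorphism by $(1)\Rightarrow(2)$ for that square. A map of locally constant sheaves with stalk $\Bbb Z$ is an isomorphism as soon as it is one at a single point of each component; since $\partial_0 P\rightarrow P$ is $\pi_0$-surjective, $\phi$ is an isomorphism on $P$. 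That is condition (2), and (3) then follows automatically because $\phi$ was \emph{built} so that the image of $[N]$ corresponds to the identity of $\mathcal L^P$, i.e.\ to $[P,\partial P]$.

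Your argument runs aground at the identification ``$\mathcal L^N|_P\cong\mathcal L^P$.'' Knowing that both restrict to the dualizing system of $(\partial_0 P,\partial_{01}P)$ does \emph{not} yield an isomorphism over $P$: $0$-connectedness gives $\pi_0$-surjectivity only, so the two orientation characters could still differ on elements of $\pi_1(P)$ not hit by $\pi_1(\partial_0 P)$. Neither \ref{gluing_pd} nor uniqueness of dualizing data helps here, since both presuppose the very duality for $C$ you are trying to establish. The only way to get the needed global isomorphism is to use the canonical map $\phi$ itself and observe it is an isomorphism---which is the paper's whole proof, and after which the rest of your argument is moot. In particular the homological input you extract ($H_n(P,\partial_1 P;\mathcal M)=0$, the long exact sequence of the triple, excision) is correct, but it is a heavier tool than needed: once one works with $\phi$, the relevant injectivity of $\hom(\mathcal L^P,\mathcal L^N|_P)\rightarrow\hom(\mathcal L^{\partial_0 P},\mathcal L^N|_{\partial_0 P})$ is an immediate consequence of $\pi_0$-surjectivity, with no duality computation at all. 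Moreover, as you yourself anticipate, even granting the identification $\psi$ one must still pin it down compatibly with the $\partial_0 P$-identifications before the injectivity can enforce $\psi_*^{-1}[P,\partial P]=\text{image of }[N]$, and that pinning-down again reduces to the paper's argument.
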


\proof We verify
condition (2). The map $\cal L^N\rightarrow \cal L^P$ in question
gives an isomorphism at each point of $\partial_0P$, because the
square
$$
\xymatrix{
\partial_{01}P\ar[r]\ar[d]&
\partial_2C\ar[d]\\
\partial_0P\ar[r]& \partial N
}
$$
is a Poincar\'e embedding. (Use $(1)\implies (2)$ for the square.) It
follows that it does so at each point of $P$.
\endproof

Finally, here is one more case where the homology test can be bypassed.
Briefly, $C$ satisfies duality if $P$, $P\cup C$, $Q$, and $C\cup Q$
satisfy duality. We state it first in the case when there is no
extraneous boundary.

\begin{prop} \label{poincare-gluing} Let
$(P,\partial P)$ and $(Q,\partial Q)$ be $n$-dimensional Poincar\'e
pairs and let $(C,\partial C)$ be a CW pair such that $\partial C$
is the disjoint union of $\partial P$ and $\partial Q$. Assume that
the pairs $(P\cup C,\partial Q)$ and $(C\cup Q,\partial P)$
are both $n$-dimensional Poincar\'e. Then the pair
$(C,\partial C)$ is also
$n$-dimensional Poincar\'e, provided the inclusion map
$\partial P\rightarrow P$ is $2$-connected.
\end{prop}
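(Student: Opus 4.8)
The plan is to exhibit $C$ as the complement of a Poincar\'e embedding and invoke the triadic recognition test (the unnamed proposition just above, the triad analogue of \ref{homological_test}). First I would set $N:=P\cup_{\partial P}C$ and $M:=C\cup_{\partial Q}Q$. Since $(C,\partial C)$ is a CW pair and all gluings are along subcomplexes, $W:=P\cup_{\partial P}C\cup_{\partial Q}Q$ is a genuine pushout, expressible both as $N\cup_{\partial Q}Q$ and as $M\cup_{\partial P}P$; here $\partial N=\partial Q$ and $\partial M=\partial P$. By hypothesis $(N,\partial Q)$ and $(M,\partial P)$ are $n$-dimensional Poincar\'e pairs, so \ref{gluing_pd} — applied once to the gluing $W=N\cup_{\partial Q}Q$ (using that $(Q,\partial Q)$ is Poincar\'e) and once to the gluing $W=M\cup_{\partial P}P$ (using that $(P,\partial P)$ is Poincar\'e) — shows that $W$ is a closed $n$-dimensional Poincar\'e complex. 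Its proof moreover constructs the duality data of $W$ so that, under the canonical isomorphisms supplied by uniqueness of duality data, $\cal L^W$ restricts to $\cal L^N$ and $\cal L^Q$ on $N$ and $Q$ and to $\cal L^M$ and $\cal L^P$ on $M$ and $P$; in particular there is a canonical isomorphism $\cal L^N|_P\cong\cal L^W|_P\cong\cal L^P$.

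Next I would apply the triad test with $P$ (viewed as the triad $(P;\emptyset,\partial P)$), $C$ (viewed as the triad $(C;\partial P,\partial Q)$ with empty corner), and $N$ (with boundary $\partial Q$) in the roles of the test cube. Its hypotheses all hold: the two squares required to be homotopy pushouts are $N=P\cup_{\partial P}C$ and the trivial square identifying $\partial Q$ with $\partial N$; the square $(\emptyset\to\partial P;\emptyset\to P)$ satisfies $n$-dimensional duality (this being just the statement that $(P,\partial P)$ is a Poincar\'e pair) and the map $\partial Q\to N$ satisfies $n$-dimensional duality (the hypothesis that $(N,\partial Q)$ is a Poincar\'e pair); and $\partial P\to P$ is $2$-connected. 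Unwinding the definition of $n$-dimensional duality for the triad $(C;\partial P,\partial Q)$ — using that $\partial P$ and $\partial Q$ are closed $(n-1)$-dimensional Poincar\'e spaces, being the boundaries of $(P,\partial P)$ and $(Q,\partial Q)$ — condition (1) of the test is precisely the assertion that $(C,\partial C)$ is an $n$-dimensional Poincar\'e pair. So it would suffice to verify condition (3): that, for the canonical isomorphism $\cal L^N|_P\cong\cal L^P$ above, the image of $[N]\in H_n(N,\partial Q;\cal L^N)$ under $H_n(N,\partial Q;\cal L^N)\to H_n(N,C;\cal L^N)\cong H_n(P,\partial P;\cal L^N|_P)$ is the fundamental class $[P,\partial P]$ (the first arrow induced by $\partial Q\hookrightarrow C$, the second being excision of the interior of $C$).

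To verify condition (3) I would chase the fundamental class $[W]\in H_n(W;\cal L^W)$ around a square of canonical maps. By the way $[W]$ is built in the proof of \ref{gluing_pd}, the decomposition $W=N\cup_{\partial Q}Q$ forces the image of $[W]$ in $H_n(W,Q;\cal L^W)\cong H_n(N,\partial Q;\cal L^N)$ to be $[N]$, while the decomposition $W=M\cup_{\partial P}P$ forces the image of $[W]$ in $H_n(W,M;\cal L^W)\cong H_n(P,\partial P;\cal L^P)$ to be $[P,\partial P]$. Since $Q\subset M$, the natural map $H_n(W,Q;\cal L^W)\to H_n(W,M;\cal L^W)$ is identified, under the excision isomorphisms just used, with the composite $H_n(N,\partial Q;\cal L^N)\to H_n(N,C;\cal L^N)\cong H_n(P,\partial P;\cal L^N|_P)$ appearing in condition (3). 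Chasing $[W]$ through the resulting commuting square shows that the image of $[N]$ in $H_n(N,C;\cal L^N)$ is $[P,\partial P]$, which is condition (3); hence condition (1) holds and $(C,\partial C)$ is an $n$-dimensional Poincar\'e pair.

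The step I expect to be the main obstacle is the bookkeeping of the third paragraph: keeping the several canonical isomorphisms of dualizing coefficient systems mutually coherent (this is exactly what the ``unique up to unique isomorphism'' principle for duality data was arranged to guarantee) and checking that the square of excision-and-inclusion maps through which $[W]$ is chased really does commute. One should also confirm that the $2$-connectedness of $\partial P\to P$ is exactly the hypothesis the triad test needs — it plays the same role as the $2$-connectedness assumption in \ref{homological_test}, ensuring that $C\to N$ is $2$-connected so that every coefficient system on $C$ extends over $N$, which is what the implication $(3)\Rightarrow(1)$ uses.
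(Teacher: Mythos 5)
Your proposal is correct and follows the same overall strategy as the paper's proof: pass to $W = P\cup C\cup Q$, which is Poincar\'e by \ref{gluing_pd}, and use the double role of $W$ (as $N\cup_{\partial Q}Q$ and as $M\cup_{\partial P}P$) to discharge the hypothesis of the recognition test \ref{homological_test} applied to $P$ embedded in $N=P\cup C$ with complement $C$. Where you diverge is in which equivalent clause of the test you verify and how. You verify condition~(3) (existence of an isomorphism $\cal L^N|_P\cong\cal L^P$ under which $[N]$ and $[P,\partial P]$ have the same image) by an explicit chase of the fundamental class $[W]$ through the two excision squares built from the two decompositions of $W$, and you correctly flag that making the sign/coherence bookkeeping airtight is the main labor. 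The paper instead verifies condition~(2): it uses the canonical map of dualizing systems $\cal L^P\to\cal L^{P\cup C}$ constructed just before \ref{homological_test}, and deduces that it is an isomorphism by a two-out-of-three argument through $\cal L^W$ — the map $\cal L^{P\cup C}\to\cal L^W$ is an isomorphism because $Q$ is Poincar\'e, the composite $\cal L^P\to\cal L^W$ is an isomorphism because $C\cup Q$ is Poincar\'e, hence $\cal L^P\to\cal L^{P\cup C}$ is one. That argument buys exactly what you identify as the obstacle: by packaging the orientation data into a canonically defined map and then invoking only functoriality (the composite of canonical maps is the canonical map), the paper never has to chase $[W]$ by hand or fix signs, whereas your version has to make the ``unique up to unique isomorphism'' principle carry that weight through an explicit diagram. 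Both arguments rest on the same coherence fact, stated in two equivalent forms; yours is more concrete, the paper's more streamlined.
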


Let us
ponder this
statement before we prove it. The space $N=P\cup C\cup
Q$ has two
reasons for being $n$-dimensional Poincar\'e, because it
is both the
union of $P\cup C$ and $Q$ along their common boundary
and the union
of $P$ and $C\cup Q$ along their common boundary.
According to one
reading of the conclusion, in order for the pushout
square
$$
\xymatrix {
\partial P\cup \partial Q\ar[r]\ar[d] &
C\ar[d]\\
P\cup Q \ar[r] & N
}
$$
to be a Poincar\'e embedding of
$P\cup Q$ in $N$ it is sufficient if it gives Poincar\'e embeddings
of both $P$ and $Q$ in $N$ (as long as the homotopy spine dimension
of $P$ is at most $n-3$).
According to another, it says that in
order for the left-hand pushout square in
$$
\xymatrix{
\partial P\ar[r]\ar[d] & C\ar[d]\ar[r] & C\cup Q\ar[d]\\
P \ar[r] & P\cup C\ar[r] & P\cup C\cup Q
}
$$
to be a Poincar\'e embedding of $P$ in
$N-Q=P\cup C$ it is sufficient if the large square is a Poincar\'e
embedding of $P$ in $N=P\cup C\cup Q$ (again as long as the homotopy
spine dimension of $P$, or for that matter $Q$, is at most
$n-3$).

\proof Duality in $P$ and in $P\cup C$ gives a map of
coefficient systems on $P$:
$$
\mathcal L^P\rightarrow\mathcal L^{P\cup C}.
$$
Duality in $P\cup C$ and in $P\cup C\cup Q$ gives a
map
$$
L^{P\cup C}\rightarrow\mathcal \mathcal L^{P\cup C\cup Q},
$$
which is an isomorphism because of duality in $Q$. Duality in
$P$ and in $P\cup C\cup Q$ gives a map
$$
L^P\rightarrow\mathcal L^{P\cup C\cup Q},
$$
which is an isomorphism because of
duality in $C\cup Q$, and which is also the composition of the other
two maps. It follows that the first map is an isomorphism. By
\ref{homological_test}, duality holds for $C$.
\endproof

The same
result, with the same proof, is valid more generally if $P$ and $Q$
are allowed to have some shared boundary, and if each of $P$, $Q$,
and $C$ is allowed to have boundary that is not shared with either
of the others. In the most general case, $N=P\cup C\cup Q$ satisfies
duality relative to $\partial N$, $P$ satisfies duality relative to
$P\cap(\partial N\cup  C\cup Q)$, $Q$ satisfies duality relative to
$Q\cap(\partial N\cup P\cup C)$, and it follows that $C$ satisfies
duality relative to $C\cap(\partial N\cup P\cup Q)$.


\begin{thebibliography}{GKW}
\bibliographystyle{invent}

\bibitem[DK]{DK}%
Dwyer, W.G., Kan D.M.: A classification theorem for diagrams
of simplicial
sets.
\newblock {\it Topology\rm } {\bf 23} (1984),
139--155.

\bibitem[G]{thesis}%
Goodwillie, T.G.:
A multiple
disjunction lemma for smooth concordance embeddings.
\newblock {\it
Memoirs AMS\rm}
{\bf 86}
(1990).

\bibitem[G2]{Goodwillie_CALC2}%
Goodwillie, T.G.:
Calculus.
II. Analytic functors.
\newblock {\it $K$-theory\rm }
{\bf 5} (1991/92),
295--332.

\bibitem[G3]{Goodwillie_CALC3}%
Goodwillie,
T.G.: Calculus
III: Taylor series
\newblock {\it Geometry and
Topology\rm } {\bf 7}
(2003),  645--711.

\bibitem[GK]{Good_Klein}%
Goodwillie, T.G.,
Klein J.R.: Multiple disjunction for spaces
of smooth embeddings.
\newblock In
preparation.

\bibitem[GKW]{Good_Klein_Weiss}%
Goodwillie, T.G.,
Klein J.R., Weiss, M.S.: Spaces of smooth embeddings, disjunction
and
surgery.
\newblock {\it Surveys on surgery theory,\rm} Vol. 2,
221--284,
\newblock Ann. of Math. Stud., 149, Princeton Univ.
Press,
Princeton, NJ,
2001.


\bibitem[GW1]{Good_Weiss1}%
Goodwillie, T.G.,
Weiss,
M.:
Embeddings from the point of view of immersion theory. I.
\newblock {\it Geom. Topol.}  {\bf 3}  (1999),
67--101.

\bibitem[GW2]{Good_Weiss2}%
Goodwillie, T.G., Weiss,
M.:
Embeddings from the point of view of immersion theory. II.
\newblock {\it Geom. Topol.}  {\bf 3}  (1999),
103--118.


\bibitem[K]{Klein}%
Klein, J.~R.: {Poincar\'e embeddings
and fiberwise homotopy theory}.
\newblock {\it Topology} {\bf 38}
(1999), 597--620.

\bibitem[Q]{Quillen}%
Quillen, D.: {Homotopical
Algebra}.
\newblock (LNM, Vol.~43).
\newblock Springer
1967


\bibitem[Wa]{W}%
Waldhausen, F.: Algebraic $K$-theory of
spaces.
\newblock {\it Algebraic and Geometric Topology, Proceedings
Rutgers 1983,
LNM 1126}, 1985, pp.\
318--419.

\bibitem[We]{Weiss}%
Weiss, M.:
Calculus of embeddings.
\newblock {\it Bull. Amer. Math. Soc.\rm}  {\bf 33}  (1996),
177--187.

\end{thebibliography}
\end{document}